\newtheorem{thmx}{Theorem}
\newtheorem*{mainconj}{Nonnegativity Conjecture for $\gamma$-vectors of symmetric edge polytopes}
\newtheorem{theorem}{Theorem}[section]
\newtheorem{proposition}[theorem]{Proposition}
\newtheorem{lemma}[theorem]{Lemma}
\newtheorem{corollary}[theorem]{Corollary}
\theoremstyle{definition}
\newtheorem{definition}[theorem]{Definition}
\newtheorem{example}[theorem]{Example}
\newtheorem{conjecture}[theorem]{Conjecture}
\newtheorem{remark}[theorem]{Remark}
\DeclareMathOperator{\lk}{\mathrm{lk}}
\DeclareMathOperator{\Var}{\mathrm{Var}}
\DeclareMathOperator{\cy}{\mathrm{cy}}
\newcommand{\N}{\mathbb{N}} 
\newcommand{\Z}{\mathbb{Z}} 
\newcommand{\R}{\mathbb{R}}
\newcommand{\cH}{\mathcal{H}}
\newcommand{\conv}{\mathop{\rm conv}\nolimits}
\newcommand{\Pc}{\mathcal{P}}
\newcommand{\Pb}{\mathbb{P}}
\newcommand{\Eb}{\mathbb{E}}
\title{On the gamma-vector of symmetric edge polytopes}
\author{Alessio D'Al\`{i}}
\address{Department of Mathematics, University of Osnabr\"{uc}k, Germany}
\curraddr{Dipartimento di Matematica, Politecnico di Milano, Italy}
\email{alessio.dali@polimi.it}
\author{Martina Juhnke-Kubitzke, Daniel K\"ohne}
\address{Department of Mathematics, University of Osnabr\"{uc}k, Germany}
\email{juhnke-kubitzke@uni-osnabrueck.de}
\email{daniel.koehne@uni-osnabrueck.de}
\author{Lorenzo Venturello}
\address{Department of Mathematics, KTH Royal Institute of Technology, Stockholm, Sweden and Department of Mathematics, University of Pisa, Pisa, Italy}
\email{lven@kth.se lorenzo.venturello@unipi.it}
\subjclass[2020]{Primary: 52B20; Secondary: 05C80, 52B05, 52B12, 05E45.} 
\begin{document}

\maketitle
\begin{abstract}
  We study $\gamma$-vectors associated with $h^*$-vectors of symmetric edge polytopes both from a deterministic and a probabilistic point of view. On the deterministic side, we prove nonnegativity of $\gamma_2$ for any graph and completely characterize the case when $\gamma_2 = 0$. The latter also confirms a conjecture by Lutz and Nevo in the realm of symmetric edge polytopes. On the probabilistic side, we show that the $\gamma$-vectors of symmetric edge polytopes of most Erd\H{o}s-R{\'e}nyi random graphs are asymptotically almost surely nonnegative up to any fixed entry. This proves that Gal's conjecture holds asymptotically almost surely for arbitrary unimodular triangulations in this setting. 
\end{abstract}
\section{Introduction}
\emph{Symmetric edge polytopes} are a class of lattice polytopes that has seen a surge of interest in recent years for their intrinsic combinatorial and geometric properties \cite{FirstSEP, HKM, OT-2021, OT-2020, CDK}  as well as for their relations to 
metric space theory \cite{Ver, GP, DeHo}, optimal transport \cite{WassDist} and physics, where they appear in the context of the Kuramoto synchronization model \cite{CDM, Chen} (see \cite{DDM} for a more detailed account of these connections).

Given a finite simple graph $G=([n],E)$, the associated \emph{symmetric edge polytope} $\Pc_G$ is defined as
\begin{equation*}
    \Pc_G=\conv(\pm(e_i-e_j)~:~ij\in E).
\end{equation*}
On the one hand, the dependence on a graph allows for graph-theoretical characterizations of some polytopal properties: for instance, Higashitani proved in \cite[Corollary 2.3]{Hig} that a symmetric edge polytope $\mathcal{P}_G$ arising from a connected graph $G$ is simplicial if and only if $G$ contains no even cycles, and this is also equivalent to $\mathcal{P}_G$ being smooth. While for simpliciality there hence exist infinitely many symmetric edge polytopes with this property, the behavior is different for simplicity. In this case, there is only a finite list of graphs whose symmetric edge polytopes are simple (see \Cref{prop:simple}). On the other hand, there are several pleasant properties that are shared by any symmetric edge polytope, independent of the underlying graph: all of these polytopes are known to admit a pulling regular unimodular triangulation \cite{OH14, HJM} and to be centrally symmetric, terminal and reflexive \cite{Hig}.  In particular, by this latter property, it follows from work of Hibi \cite{Hib} that their $h^*$-vectors are palindromic. Thus, given the $h^\ast$-vector $h^\ast(\Pc_G)=(h^\ast_0,\ldots,h^\ast_d)$ of a symmetric edge polytope, one can define the $\gamma$-vector of $\Pc_G$ by applying the following change of basis:
\begin{equation} \label{eq:gamma_intro}
    \sum_{i=0}^{\lfloor \frac{d}{2}\rfloor} \gamma_i t^i(t+1)^{d-2i}=
\sum_{j=0}^{d} h^\ast_j t^{j}.
\end{equation}
Obviously, $\gamma(\Pc_G)=(\gamma_0,\ldots,\gamma_{\lfloor\frac{d}{2}\rfloor})$ stores the same information as $h^\ast(\Pc_G)$ in a more compact form. More generally, in the same way, one can associate a $\gamma$-vector with any symmetric vector and this has been done and studied extensively in a lot of cases. One of the most prominent examples in topological combinatorics, which is strongly related to the just mentioned example of $h^*$-vectors of reflexive polytopes, are $h$-vectors of simplicial spheres. For \emph{flag} spheres, Gal's conjecture \cite{Gal} states that their $\gamma$-vectors are nonnegative. Several related conjectures exist, including the Charney--Davis conjecture \cite{CD}, claiming nonnegativity only for the last entry of the $\gamma$-vector, and the Nevo--Petersen conjecture \cite{NP} which even conjectures the $\gamma$-vector of a flag sphere to be the $f$-vector of a balanced simplicial complex. Those conjectures have been a very active area of research in the last few years. However, even though they could be solved in special cases \cite{Aisbett,A2012,AV,DavisOkun,Gal,LaN,NP,NPT} and new approaches have been developed towards their solution \cite{CN,CN-2021}, they remain wide open in general. For an introduction to $\gamma$-nonnegativity, we invite the interested reader to consult the surveys \cite{Ath} and \cite[Sections 3 and 6]{Bra}.

If a polytope $\Pc$ admits a regular unimodular triangulation $\Delta$, which is the case for symmetric edge polytopes, then the restriction of $\Delta$ yields a unimodular triangulation of the boundary complex of $\Pc$, as well. If, in addition, $\Pc$ is reflexive, it is well-known that the $h^\ast$-vector of $\Pc$ equals the $h$-vector of any unimodular triangulation $\Delta$ of its boundary, which in particular is a simplicial sphere. This provides a link between the study of the $\gamma$-vector of $\mathcal{P}_G$ and the rich world of conjectures on the $\gamma$-nonnegativity of simplicial spheres; however, note that the objects we are interested in will \emph{not} be flag in general. Despite the lack of flagness, in all the cases known so far the $\gamma$-vector of $\mathcal{P}_G$ is nonnegative, and this brought Ohsugi and Tsuchiya to formulate the following conjecture, which is the starting point of this paper:
\begin{mainconj}\cite[Conjecture 5.11]{OT-2021}\label{MainConjecture}
Let $G$ be a graph. Then $\gamma_i(\mathcal{P}_G) \geq 0$ for every $i \geq 0$.
\end{mainconj}
On the one hand, it is already known and follows e.g.~from \cite{BR} that a weaker property, namely, unimodality of the $h^\ast$-vector holds. On the other hand, though it is tempting to hope that even the stronger property of the $h^\ast$-polynomial being real-rooted is true, this is not the case in general, as shown by the $5$-cycle. 
The $\gamma$-nonnegativity conjecture above has been verified for special classes of graphs, mostly by direct computation: as shown in \cite[Section 5.3]{OT-2021}, such classes encompass cycles, suspensions of graphs (which include both complete graphs and wheels), outerplanar bipartite graphs and complete bipartite graphs. This last instance was originally proved in \cite{HJM} but was generalized in \cite{OT-2021} to bipartite graphs $\widetilde{H}$ obtained from another bipartite graph $H$ as in \cite[p. 708]{OT-2021}.
The main goal of this paper is to provide some supporting evidence to the $\gamma$-nonnegativity conjecture, independent of the graph. We take two different approaches: a \emph{deterministic} and a \emph{probabilistic} one.

In the deterministic part, developed in Sections \ref{sec:nonnegativity} and \ref{sec:lutznevo}, we focus on the coefficient $\gamma_2$. Through some delicate combinatorial analysis, we are able to prove that $\gamma_2$ is always nonnegative. Moreover, we provide a characterization of those graphs for which $\gamma_2(\Pc_G) = 0$:
\begin{thmx}[Theorems \ref{thm: gamma2 >0} and \ref{cor: gamma_2=0}]\label{thm:A}
Let $G = ([n],E)$ be a graph. Then $\gamma_2(\Pc_G) \geq 0$. Moreover, if $G$ is 2-connected, then $\gamma_2(\Pc_G)=0$ if and only if either $n<5$, or $n\geq 5$ and $G$ is isomorphic to one of the following two graphs:
\begin{itemize}
    \item the graph $G_n$ with edge set $\{12\}\cup\{1k,2k~:~ k\in \{3,\dots,n\}\}$; or
    \item the complete bipartite graph $K_{2,n-2}$.
\end{itemize}
\end{thmx}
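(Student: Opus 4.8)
The plan is to reduce to the $2$-connected case, distill a closed combinatorial formula for $\gamma_2(\Pc_G)$, and then prove the resulting graph-theoretic inequality by careful estimates. First, if $G$ is disconnected or has a cut vertex, then $\Pc_G$ is the free sum $\Pc_{G_1}\oplus\Pc_{G_2}$ of the symmetric edge polytopes of the two pieces, since the corresponding families of edge vectors span coordinate subspaces of $\{\sum x_i=0\}$ meeting only in the origin. As symmetric edge polytopes are reflexive, $h^\ast$-polynomials (and hence $\gamma$-polynomials) are multiplicative under free sums; iterating over the block decomposition gives $\gamma(\Pc_G)=\prod_B\gamma(\Pc_B)$, the product over the $2$-connected blocks $B$ of $G$ (bridges contribute the factor $1$). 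Since $\gamma_0(\Pc_B)=1$ and $\gamma_1(\Pc_B)=h_1^\ast(\Pc_B)-\dim\Pc_B=2(m_B-n_B+1)\ge 0$, this yields $\gamma_2(\Pc_G)=\sum_B\gamma_2(\Pc_B)+\sum_{B\ne B'}\gamma_1(\Pc_B)\gamma_1(\Pc_{B'})$, so both assertions follow once they are proved for $2$-connected graphs. From now on let $G$ be $2$-connected on $n$ vertices and $m$ edges, so $d:=\dim\Pc_G=n-1$.

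Next I would turn \eqref{eq:gamma_intro} into a formula. Expanding it gives $\gamma_2=h_2^\ast-(d-2)h_1^\ast+\binom{d}{2}-d$. Because $\Pc_G$ is terminal, its only lattice points are the origin and the $2m$ vertices $\pm(e_i-e_j)$, whence $h_1^\ast=2m-n+1$. Because $\Pc_G$ admits a regular unimodular triangulation it has the integer decomposition property, so every lattice point of $2\Pc_G$ is a sum of two lattice points of $\Pc_G$, and $|2\Pc_G\cap\Z^n|=h_2^\ast+n\,h_1^\ast+\binom{n+1}{2}$. Writing $N_3:=|2\Pc_G\cap\Z^n|-4m-1$ for the number of such sums that are neither the origin, nor a vertex, nor twice a vertex, these two facts combine to
\[
\gamma_2(\Pc_G)=N_3+2(n-1)(n-3)-2m(2n-5).
\]

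The next step is to read off $N_3$ from $G$. Every point counted by $N_3$ has the form $\chi(a)+\chi(b)$ for two distinct arcs $a,b$ of $G$ (with $\chi((i,j))=e_i-e_j$) that are not the two orientations of a single edge. Classifying these pairs by how $a$ and $b$ meet --- sharing a tail, sharing a head, meeting head-to-tail (which produces a vertex-shaped vector $e_i-e_k$, a genuinely new point exactly when $ik\notin E$), or being vertex-disjoint --- and checking that the four resulting families of points are pairwise disjoint and disjoint from the vertices and their doubles, one obtains
\[
N_3=2\sum_{i}\binom{\deg i}{2}\;+\;2\cdot\#\{\{i,k\}:d_G(i,k)=2\}\;+\;N_C,
\]
where $N_C$, the number of ``four-support'' points $\pm(e_i+e_k-e_j-e_l)$, is governed by the size-$2$ matchings of $G$ together with the induced subgraphs on $4$ vertices (a split $\{i,k\}\mid\{j,l\}$ of a $4$-set yields a point precisely when $G$ contains one of the two matchings transverse to it). Substituting, $\gamma_2(\Pc_G)\ge 0$ becomes a purely graph-theoretic inequality relating $n$, $m$, the degree sequence, the number of pairs at distance $2$, and the local $4$-vertex structure; I expect this to be the main obstacle.

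Finally, to prove that inequality and pin down equality: the crude bounds $N_C\ge 2\cdot(\text{number of size-}2\text{ matchings})$ and $\sum_i\binom{\deg i}{2}\ge\sum_i(\deg i-1)=2m-n$ dispose of the sparse regime, but for intermediate $m$ the negative term $-2m(2n-5)$ overwhelms them, so I would argue by cases on the minimum degree: when $\delta(G)\ge 3$ one has $m\ge 3n/2$ and can afford sharper matching counts, while the low-degree graphs are handled by a direct structural analysis. For the equality statement I would track exactly where each estimate is attained with equality; this should force $G$ to have diameter $2$ (so that every non-edge is counted among the distance-$2$ pairs) and to avoid a short list of ``wasteful'' induced configurations on $4$ and $5$ vertices, which together narrow $G$ down to $G_n$ or $K_{2,n-2}$. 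For these two families one checks directly that $N_3$ equals $6(n-2)^2$ and $6n^2-28n+34$ respectively, so that $\gamma_2(\Pc_G)=0$; and for $n<5$ the statement is vacuous, since then $\lfloor d/2\rfloor\le 1$ and $\gamma_2$ is (by convention) zero.
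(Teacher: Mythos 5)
Your reduction to the $2$-connected case via block decomposition and multiplicativity of the $\gamma$-polynomial under free sums is correct and matches the paper's Lemma~\ref{lem: reduce to 2-connected}. The closed formula you derive,
\[
\gamma_2(\Pc_G)=N_3-2m(2n-5)+2(n-1)(n-3),
\]
with $N_3$ the number of ``genuinely new'' lattice points of $2\Pc_G$, is also correct (I checked it against $K_n$ and against the two extremal families), and the classification of $N_3$ into common-tail, common-head, head-to-tail, and disjoint arc contributions is sound. However, the route then diverges entirely from the paper's, and the part you flag as ``the main obstacle'' --- proving the resulting inequality in degrees, distance-$2$ pairs and matchings, and extracting the equality classification --- is left as a sketch. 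The bounds you propose ($N_C\geq 2\cdot\#\{\text{size-}2\text{ matchings}\}$, $\sum_i\binom{\deg i}{2}\geq 2m-n$) are far too lossy in the ``intermediate $m$'' regime you yourself identify, and the proposed minimum-degree split is not carried out; the assertion that equality ``should force $G$ to have diameter $2$'' and then narrow down to $G_n$ or $K_{2,n-2}$ is plausible but not derived. This is a genuine gap, not a routine detail to fill.

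For comparison, the paper sidesteps these global estimates. It instead uses a Gr\"obner-induced unimodular triangulation $\Delta_<$ of $\partial\Pc_G$ to write $\gamma_2(\Pc_G)=2\cy(G)(\cy(G)+2)-n_1(G)$ (Lemma~\ref{lem: gamma1-gamma2 for G}), where $n_1(G)$ counts ``bad pairs'' of oriented edges, and then proves $\gamma_2\geq 0$ by induction on $|E|$: Propositions~\ref{prop: no 3- 4-cycles}, \ref{prop: deg 2 vertex} and \ref{prop: super long} carefully select an edge $e$ with $\gamma_2(\Pc_G)\geq\gamma_2(\Pc_{G\setminus e})$ and analyze the change in $n_1$ versus $\cy$ via ear decompositions of the subgraph $H$ of edges sharing a $3$- or $4$-cycle with $e$. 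This localizes the combinatorics to what happens around one deleted edge, which is what makes the estimate tight enough to prove nonnegativity and, crucially, to force the structure (double cones and bipartite cones) underlying the equality characterization in Theorem~\ref{cor: gamma_2=0}. Your lattice-point-count formula is an interesting triangulation-free alternative formulation, but to turn it into a proof you would need to rebuild essentially all of that localization in the new language.
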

The ``if" part of the equality statement can be deduced from the results in \cite{HJM} and \cite{OT-2021}, where the authors compute explicitly the $\gamma$-vector of the families of graphs appearing in \Cref{thm:A}. See the proof of \Cref{cor: gamma_2=0} for more details.
As an application, we confirm a conjecture by Lutz and Nevo \cite[Conjecture 6.1]{LN}, characterizing flag spheres with $\gamma_2=0$, in the restricted context of some natural Gr\"obner-induced triangulations of boundaries of symmetric edge polytopes: see Theorem \ref{thm:lutznevoSEP}. We want to point out that the symmetric edge polytopes of the graphs from the second part of \Cref{thm:A} indeed admit a flag triangulation. 

Finally, Section \ref{sec:probabilistic} brings random graphs into the picture. The Erd\H{o}s-R{\'e}nyi model $G(n, p(n))$ is one of the most popular and well-studied ways to generate a graph on the vertex set $[n]$ via a random process: for a graph $G\in G(n,p)$, the probability of $ij$ with $1\leq i<j\leq n$ being an edge of $G$ equals $p(n)$, and all of these events are mutually independent. Our question is then: for an Erd\H{o}s-R{\'e}nyi graph $G\in G(n,p)$, how likely is it that the entries of the $\gamma$-vector of $\mathcal{P}_G$ are nonnegative?  As an extension, we pose the question of how \emph{big} those entries will most likely be. Our main result, answering both questions, is the following:
\begin{thmx}[Theorems \ref{gamma:no cycles} and \ref{thm:gammaconcentration1}]\label{thm:B}
 Let $k$ be a positive integer. For the Erd\H{o}s-R{\'e}nyi model $G(n,p(n))$, where $p(n)=n^{-\beta}$ for some $\beta>0$, $\beta \neq 1$, the following statements hold: 
\begin{itemize}
    \item \emph{(subcritical regime)} if $\beta > 1$, then asymptotically almost surely $\gamma_\ell = 0$ for all $\ell\geq 1$;
    \item \emph{(supercritical regime)} if $0 < \beta < 1$, then asymptotically almost surely $\gamma_{\ell} \in \Theta(n^{(2-\beta)\ell})$ for every $0 < \ell \leq k$.
\end{itemize}
\end{thmx}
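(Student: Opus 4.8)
Here $np=n^{1-\beta}\to 0$, so the expected number of cycles of $G$ is at most $\sum_{k\geq 3}(np)^k/(2k)\to 0$, and a first-moment argument shows that a.a.s.\ $G$ is a forest; it thus suffices to prove that $\gamma_\ell(\Pc_G)=0$ for every $\ell\geq 1$ whenever $G$ has no cycle. If $G$ is a forest on $[n]$ with components $T_1,\dots,T_c$, the linear spans of the edge vectors of distinct $T_i$ meet only at the origin, so $\Pc_G$ is the free sum $\Pc_{T_1}\oplus\cdots\oplus\Pc_{T_c}$; and the edge vectors $\pm(e_a-e_b)$ of a tree on $v$ vertices form, up to sign, a lattice basis of the root lattice $A_{v-1}$, so each $\Pc_{T_i}$ is unimodularly equivalent to a cross-polytope with $h^\ast(\Pc_{T_i};t)=(1+t)^{|V(T_i)|-1}$. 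By multiplicativity of the $h^\ast$-polynomial under free sums, $h^\ast(\Pc_G;t)=(1+t)^{n-c}$, and comparing with \eqref{eq:gamma_intro} gives $\gamma_0=1$ and $\gamma_\ell=0$ for $\ell\geq 1$.

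\textbf{Supercritical regime $(0<\beta<1)$.} Fix $\ell\leq k$. Since a.a.s.\ $G$ is connected, we may assume $d:=\dim\Pc_G=n-1$ and $h^\ast_1(\Pc_G)=2m-n+1$ with $m=|E(G)|$. The plan is to reduce the claim to a two-sided estimate on the single entry $h^\ast_\ell(\Pc_G)$. The change of basis \eqref{eq:gamma_intro} is triangular with unit diagonal, $h^\ast_\ell=\sum_{i\leq\ell}\gamma_i\binom{d-2i}{\ell-i}$, so inverting it writes $\gamma_\ell=h^\ast_\ell+\sum_{j<\ell}c_{\ell j}(d)\,h^\ast_j$ with each $c_{\ell j}$ a polynomial in $d$ of degree $\leq\ell-j$, hence $|c_{\ell j}(d)|=O(n^{\ell-j})$. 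For the upper bound, I would restrict a regular unimodular triangulation of $\Pc_G$ to $\partial\Pc_G$: this is a simplicial $(n-2)$-sphere on the $2m$ vertices $\pm(e_i-e_j)$ whose $h$-vector is $h^\ast(\Pc_G)$ because $\Pc_G$ is reflexive, so the Upper Bound Theorem for spheres gives $h^\ast_j(\Pc_G)\leq\binom{2m-n+j}{j}=\binom{h^\ast_1(\Pc_G)+j-1}{j}$ for all $j\leq\ell$. A Chernoff bound yields $m=O(n^{2-\beta})$ a.a.s.\ (since $\Eb[m]\sim\tfrac12 n^{2-\beta}$), so a.a.s.\ $h^\ast_j(\Pc_G)=O(n^{(2-\beta)j})$; feeding this into the inversion formula, the off-diagonal terms sum to $O\!\big(\sum_{j<\ell}n^{\ell-j}n^{(2-\beta)j}\big)=O\!\big(n^{(2-\beta)\ell-(1-\beta)}\big)=o(n^{(2-\beta)\ell})$, so that a.a.s.
\[
\gamma_\ell(\Pc_G)=h^\ast_\ell(\Pc_G)+o\!\left(n^{(2-\beta)\ell}\right)=O\!\left(n^{(2-\beta)\ell}\right).
\]

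\textbf{Lower bound.} It remains to show that a.a.s.\ $h^\ast_\ell(\Pc_G)=\Omega(n^{(2-\beta)\ell})$. I would fix a balanced partition $[n]=S_1\sqcup\cdots\sqcup S_\ell$ and let $G'\subseteq G$ be the spanning subgraph retaining only the edges inside the parts, so $G'=G[S_1]\sqcup\cdots\sqcup G[S_\ell]$. Then $\Pc_{G'}\subseteq\Pc_G$, so Stanley's monotonicity theorem for $h^\ast$-vectors gives $h^\ast_\ell(\Pc_G)\geq h^\ast_\ell(\Pc_{G'})$ (if one prefers to apply it only in equal dimension, first enlarge $G'$ to a connected spanning subgraph of $G$ using a constant number of extra edges, which by block-multiplicativity of $h^\ast$ does not affect the bound below). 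Since $\Pc_{G'}=\bigoplus_i\Pc_{G[S_i]}$, multiplicativity gives $h^\ast(\Pc_{G'};t)=\prod_i h^\ast(\Pc_{G[S_i]};t)$, and --- all $h^\ast$-coefficients being nonnegative --- selecting the linear term of each factor yields $h^\ast_\ell(\Pc_{G'})\geq\prod_{i=1}^\ell h^\ast_1(\Pc_{G[S_i]})\geq\prod_{i=1}^\ell\big(2|E(G[S_i])|-|S_i|+1\big)$. A Chernoff bound shows that a.a.s.\ $|E(G[S_i])|\geq\tfrac12\binom{|S_i|}{2}p$ for all $i$, and since $\binom{|S_i|}{2}p\sim\tfrac{1}{2\ell^2}n^{2-\beta}$ dominates $|S_i|$, each factor is $\geq c_\ell\, n^{2-\beta}$, so $h^\ast_\ell(\Pc_G)\geq c_\ell^{\,\ell}\,n^{(2-\beta)\ell}$ a.a.s. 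Combining this with the upper bound and a union bound over $\ell=1,\dots,k$ and the finitely many Chernoff events used yields $\gamma_\ell(\Pc_G)\in\Theta(n^{(2-\beta)\ell})$ a.a.s.\ for every $0<\ell\leq k$.

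\textbf{Main obstacle.} The genuinely new ingredient is the ``polytopal sandwich'' of the last two paragraphs: controlling $h^\ast_\ell(\Pc_G)$ from above by the Upper Bound Theorem and from below by a free-sum estimate extracted from a vertex partition, and transferring this to $\gamma_\ell$ via the triangularity of \eqref{eq:gamma_intro} together with the UBT bounds on the lower $h^\ast$-coefficients. The delicate part is the bookkeeping, namely verifying that the error incurred in passing from $h^\ast_\ell$ to $\gamma_\ell$, and the loss in passing from $G$ to $G'$, are both of order strictly below the target $n^{(2-\beta)\ell}$; once this deterministic input is in place, everything probabilistic reduces to Chernoff concentration for the numbers of edges in $G$ and in the induced subgraphs $G[S_i]$.
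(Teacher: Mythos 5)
Your proof is correct, and while the subcritical case is essentially the paper's argument (first moment shows $G$ is a.a.s.\ a forest, whence $\Pc_G$ is a free sum of cross-polytopes), the supercritical case takes a genuinely different route. The paper works directly inside the Gr\"obner triangulation $\Delta_<$ of $\partial\Pc_G$: it translates minimal non-faces into short cycles (\Cref{lem: nonfaces of Delta<}), derives a second-moment concentration inequality for the cycle counts $X_\ell$, turns this into two-sided bounds on the face numbers $f_{\ell-1}(\Delta_G)$, and finally extracts $\gamma_\ell$ from \eqref{eq:gamma_faces} by induction, using the already-established nonnegativity of the lower $\gamma_i$'s to control the error. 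You instead sandwich $h^*_\ell$ itself: the Upper Bound Theorem for spheres applied to $\partial\Delta$ gives $h^*_j \leq \binom{h^*_1+j-1}{j}$, hence $h^*_j = O(n^{(2-\beta)j})$ after edge concentration; the lower bound comes from a vertex partition, multiplicativity of $h^*$ under free sums, and Stanley's monotonicity theorem to push the estimate from $\Pc_{G'}$ up to $\Pc_G$; and the transfer to $\gamma_\ell$ is done once and for all via the unitriangular change of basis, with $|c_{\ell j}(d)|=O(n^{\ell-j})$ making the off-diagonal contribution $o(n^{(2-\beta)\ell})$. What the paper's approach buys is that it is self-contained modulo the explicit triangulation and elementary probability, does not invoke the UBT or Stanley's monotonicity, and yields as a by-product concentration inequalities for the $f$-vector of $\Delta_G$ which may be of independent interest. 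Your approach is more modular and conceptually transparent --- in particular, it sidesteps the induction on $\ell$ needed to propagate nonnegativity --- at the cost of importing two substantial pieces of general machinery. One small point worth making explicit: the equality $h^*_1(\Pc_{G[S_i]}) = 2|E(G[S_i])| - |S_i| + c_i$ (with $c_i$ the number of components) relies on $\Pc_G$ being terminal, so that the only lattice points are the vertices and the origin; this is exactly Higashitani's result cited in \Cref{sec:Prel}, and it is what makes your free-sum lower bound go through.
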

In particular, this shows that $\gamma_\ell\geq 0$ for $1\leq \ell\leq k$ with high probability, thereby proving that (up to a fixed entry of the $\gamma$-vector) Gal's conjecture holds with high probability. To prove this result, we need to distinguish two regimes: subcritical ($\beta>1$) and supercritical ($0<\beta<1)$, the subcritical one being the easier one. Along the proof, we derive concentration inequalities for the number of non-faces and faces of the triangulation of $\Pc_G$ studied in \cite[Proposition 3.8]{HJM}. 

The paper is structured as follows. In \Cref{sec:Prel} we provide necessary background on graphs, simplicial complexes and polytopes and prove some basic statements for symmetric edge polytopes, including the characterization of their edges and of when they are simple. \Cref{sec:nonnegativity} is devoted to the proof of \Cref{thm:A}, whereas in \Cref{sec:lutznevo} we prove the previously mentioned conjecture by Lutz and Nevo in our setting. Finally, \Cref{sec:probabilistic} contains the proof of \Cref{thm:B}.

\subsection*{Acknowledgements}
We wish to thank Emanuele Delucchi and Hidefumi Ohsugi for their useful comments.
L.~V. is funded by the G\"oran Gustafsson foundation.

\section{Preliminaries}\label{sec:Prel}
In this section, we provide the relevant definitions and notations concerning graph theory, simplicial complexes, polytope theory and in particular symmetric edge polytopes. (For more background, we refer to \cite{Diestel}, \cite{Stanley-greenBook} and \cite{BrGu}.)

\subsection{Graphs}
Let $G=(V(G),E(G))$ be a simple graph with vertex set $V(G)$ and edge set $E(G)$. For $\{v, w\}\in E(G)$, we use the shorthand notation $vw$. Though, a priori, all graphs in this paper are undirected, we often consider different orientations of the edges. We then write $v\to w$ and $w\to v$ for the directed edges going from $v$ to $w$ and $w$ to $v$, respectively.  A \emph{path} of length $n$, or \emph{$n$-path}, denoted by $P_n$, is the graph $P_n=(\{v_0,\ldots,v_n\},\{v_0v_1,v_1v_2,\ldots,v_{n-1}v_{n}\})$. The graph $C_n=(\{v_1,\ldots,v_n\},\{v_1v_2,\ldots,v_{n-1}v_{n},v_nv_1\})$ is called \emph{$n$-cycle}.  The \emph{distance} $d(u,v)$ between vertices $u$ and $v$ is given as the length of a shortest path from $u$ to $v$. We denote by $G\cup e$ and $G\setminus e$ the graph obtained from $G$ by \emph{adding} and \emph{removing} an edge $e$, respectively. We define \emph{deleting} a vertex set $V'\subseteq V$ as $G\setminus V'=(V\setminus V',E\setminus \{e\ :\ e\cap V'\neq \emptyset\})$. The \emph{cyclomatic number} of a graph $G$ with $c$ connected components is defined as $\cy(G)=|E|-|V|+c$. 
It is well-known that $G$ is $2$-connected if it has an \emph{open ear decomposition}, meaning that $G$ is either a cycle (the closed ear) or can be obtained from such by successively attaching paths (the open ears) whose internal vertices are disjoint from the previous ears and whose two end vertices belong to an earlier ear. It is easy to see that the number of ears in any such decomposition equals the cyclomatic number of $G$. Moreover, every graph decomposes uniquely into its \emph{$2$-connected components}, i.e., inclusion-maximal $2$-connected subgraphs, where when speaking about this decomposition we also consider  single edges to be $2$-connected. We further use $K_n$ and $K_{n,m}$ to denote respectively the complete graph on $n$ vertices and the complete bipartite graph on $n$ and $m$ vertices. If two graphs $G$ and $H$ both contain a subgraph isomorphic to a $k$-clique $K_k$, the graph $G\oplus_k H$ obtained by gluing $G$ and $H$ together along $K_k$ is called the \emph{$k$-clique sum} of $G$ and $H$.

\subsection{Simplicial complexes}
A \emph{simplicial complex} $\Delta$ on vertex set $V$  is
any collection of subsets of $V$ closed under inclusion. The elements of $\Delta$ are called \emph{faces}, and a face that is maximal with respect to inclusion is called \emph{facet}. We will sometimes write $\langle F_1, ..., F_m\rangle$ to denote the simplicial complex with facets $F_1, ..., F_m$. A set which is not in $\Delta$ is a \emph{non-face}. A non-face is called \emph{minimal} if it is minimal with respect to inclusion.  The \emph{dimension} of a face $F$ is defined as $\dim(F):=|F|-1,$ and the dimension of $\Delta$ is $\dim(\Delta)=\max(\dim(F)\ :\ F\in\Delta).$  0-dimensional and 1-dimensional faces of $\Delta$ are called \emph{vertices} and \emph{edges}, respectively. The $1$-skeleton of $\Delta$ is the simplicial complex consisting of all edges and vertices of $\Delta$. For $v\in V$, its \emph{degree}  $d(v)$ is the degree of $v$ in the $1$-skeleton of $\Delta$ (as a graph). Given a $(d-1)$-dimensional simplicial complex $\Delta$, its $f$-vector $f(\Delta)=(f_{-1}(\Delta),f_0(\Delta),\ldots,f_{d-1}(\Delta))$ is defined by $f_i(\Delta)=|\{f\in\Delta\ :\ \dim(F)=i\}|$ for $-1\le i\le d-1$ and its \emph{$h$-vector} $h(\Delta)=(h_0(\Delta),h_1(\Delta),\ldots,h_d(\Delta))$ by 
\begin{equation}\label{eqn: h-vector}
   h_j(\Delta)=\sum_{i=0}^j(-1)^{j-1}\binom{d-i}{d-j}f_{i-j}(\Delta)
\end{equation}
for $0\le i\le d.$  Then, $f(\Delta,x)=\sum_{i=-1}^{d-1}f_i(\Delta)x^i$ and $h(\Delta,x)=\sum_{i=0}^{d}h_i(\Delta)x^i$ are called the  \emph{$f$}- and \emph{$h$-polynomial} of $\Delta$, respectively. If $h(\Delta)$ is \emph{symmetric}, i.e., $h_i(\Delta)=h_{d-i}(\Delta)$ for every choice of $i$, the $\gamma$-vector $\gamma(\Delta)=(\gamma_0(\Delta),\gamma_1(\Delta), \ldots,\gamma_{\lfloor\frac{d}{2}\rfloor}(\Delta))$ of $\Delta$ is defined via 
\begin{equation*}
    h(\Delta,x)=\sum_{i=0}^{\lfloor\frac{d}{2}\rfloor}\gamma_i(\Delta)x^i(1+x)^{d-2i}.
\end{equation*}
One way to study a simplicial complex $\Delta$ locally is to look at the \emph{link} of a face $F$, i.e.~the subcomplex of $\Delta$ defined as $\lk_{\Delta}(F)=\{H\in \Delta\ :\ H\cap F=\emptyset,\ F\cup H\in\Delta\}$.
\\
For an edge $F=\{v,w\}\in\Delta$, the \emph{edge contraction} $\Delta/F$ of $\Delta$ at $F$ is the simplicial complex obtained from $\Delta$ by identifying $v$ with $w$ in all faces of $\Delta$, i.e.,
    \[
        \Delta/F = \{H \in \Delta~:~ v\notin H\}\cup \{H\setminus\{v\} \cup \{w\}~:~ v\in H\}.
    \]
The \emph{face deletion} $\Delta\setminus F$ of a face $F\in \Delta$ from $\Delta$ is defined as $\Delta\setminus F = \{H\in \Delta~: ~F\not \subseteq H\}$.
Given simplicial complexes $\Delta$ and $\Gamma$, the simplicial complex $\Delta\star\Gamma=\{F\cup H\ :\ F\in\Delta,\ H\in\Gamma  \}$ is called the \emph{join} of $\Delta$ and $\Gamma.$

\subsection{Lattice polytopes}
A \emph{lattice polytope} $\Pc$ is the convex hull of finitely many points of a lattice in $\R^d$, typically $\Z^d$. An example, which is of importance for us, is provided by the \emph{$d$-dimensional cross-polytope} $\Diamond_d=\conv(\pm e_i\ :\ 1\le i\le d)$.  
A \emph{triangulation} $\mathcal{T}$ of a $d$-dimensional lattice polytope $\Pc$ is a subdivision into simplices of dimension at most $d$. Such a triangulation is \emph{unimodular} if all its simplices are, i.e., they have normalized volume $1$. The triangulation $\mathcal{T}$ is called \emph{flag} if its minimal non-faces all have cardinality $2$. 

Ehrhart \cite{Ehrhart} proved that $|n\Pc\cap\Z^d|$, i.e.~the number of lattice points in the $n$-th dilation of $\Pc$, is given by a polynomial $E(\Pc,n)$ of degree $\dim P$ in $n$ for all integers $n\geq 0$. The  \emph{$h^\ast$-polynomial} $h^\ast(\Pc,x)=h_0^\ast(\Pc)+h_1^\ast (\Pc)x+\cdots +h_d^\ast(\Pc) x^d$ of a $d$-dimensional lattice polytope $\Pc$ is obtained by applying a particular change of basis to $E(\Pc,n)$; namely, 
\[
E(\Pc,n)=h_0 ^\ast(\Pc) {n+d\choose d}+h_1 ^\ast(\Pc) {n+d-1\choose d} +\cdots + h_d^\ast (\Pc){n\choose d} \, .
\]
Stanley \cite{Stanley} showed that $h^\ast(\Pc,x)$ has only nonnegative coefficients. By work of Hibi \cite{Hib}, $h^\ast(\Pc,x)$ is \emph{palindromic}, i.e., $h^\ast(\Pc,x)=x^d h^\ast\left(\Pc,\frac{1}{x}\right)$, if and only if $\Pc$ is reflexive. If $\Pc$ has a unimodular triangulation $\mathcal{T}$, then $h^\ast(\Pc,x)$  coincides with the $h$-polynomial of $\mathcal{T}$. If, in addition, $\Pc$ is reflexive, then $h^\ast(\Pc,x)$ is also equal to the $h$-polynomial of the unimodular triangulation of the boundary of $\Pc$ induced by $\mathcal{T}$.

\subsection{Basic properties of symmetric edge polytopes}
Let $G=([n],E)$ be a simple graph with \emph{symmetric edge polytope} $\Pc_G$. For $ij\in E$, we will call the vertices $e_i-e_j$ and $e_j-e_i$ of $\Pc_G$ an \emph{antipodal pair}.  In the following, we will always identify a vertex $e_i-e_j$ of $\Pc_G$ with the oriented edge $i\to j$ and use the short-hand notation $e_{i,j}=e_i-e_j$. Though $G$ is unoriented, we can naturally orient each cycle of $G$ by orienting its edges either clockwise or counter-clockwise. By abuse of notation, we refer to those cycles as the \emph{oriented} cycles of $G$.

Turning to triangulations of symmetric edge polytopes, we recall that it was shown in \cite{OH14} that $\Pc_G$ admits a regular unimodular triangulation. It is well-known (see e.g., \cite[Corollary 8.9]{Sturmfels}) that such a triangulation can be obtained from the Gr\"obner basis of the toric ideal of $\Pc_G$ (with respect to the degrevlex order), provided in \cite[Proposition 3.8]{HJM}, as follows:

\begin{lemma}\label{lem: nonfaces of Delta<}
	Let $<$ be a total order on the edges $E$ of $G$. Then there exists a unimodular triangulation $\Delta_<$ of $\partial \mathcal{P}_G$ such that the $F$ is a non-face of $\Delta_<$ if and only if it contains at least one subset of the following form:
	\begin{itemize}
	\item[(i)] an antipodal pair; 
		\item[(ii)] an $\ell$-element subset of an oriented $(2\ell-1)$-cycle of $G$; or,
 		\item[(iii)] an $\ell$-element subset of an oriented $2\ell$-cycle of $G$ not containing its $<$-minimal edge.
	\end{itemize}
\end{lemma}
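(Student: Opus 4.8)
The plan is to obtain $\Delta_<$ as the restriction to $\partial\Pc_G$ of the regular triangulation of $\Pc_G$ attached to the reduced Gröbner basis of \cite[Proposition 3.8]{HJM} via the standard dictionary of \cite[Chapter 8]{Sturmfels}, and then to read the non-faces off the leading terms of that basis. Concretely, $I_{\Pc_G}$ sits in the polynomial ring with one variable for each oriented edge of $G$ together with one variable, say $z$, for the unique interior lattice point $0$ of the reflexive polytope $\Pc_G$; I would fix the degrevlex order in which the edge-variables are ordered as prescribed by $<$ and $z$ comes last. By \cite[Corollary 8.9]{Sturmfels} the radical of the initial ideal of $I_{\Pc_G}$ is the Stanley--Reisner ideal of a regular triangulation $\mathcal{T}$ of $\Pc_G$, and since the leading terms of the basis in \cite[Proposition 3.8]{HJM} are squarefree this initial ideal is already radical; hence its minimal monomial generators---the leading terms of the reduced Gröbner basis---are precisely the minimal non-faces of $\mathcal{T}$. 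Because $z$ is the smallest variable it divides none of these leading terms, so $0$ lies in no minimal non-face, $\mathcal{T}=\{0\}\star\Delta_<$ for $\Delta_<:=\{F\in\mathcal{T}:0\notin F\}$, and $\Delta_<$ is a unimodular triangulation of $\partial\Pc_G$. For a set $F$ of oriented edges of $G$ one then has the chain of equivalences: $F$ is a non-face of $\Delta_<$ $\iff$ $F$ is a non-face of $\mathcal{T}$ $\iff$ some leading term of the reduced Gröbner basis divides $x_F$.

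It then remains to recognise those leading terms. Inspecting \cite[Proposition 3.8]{HJM}, the reduced Gröbner basis consists of the binomials with leading terms $x_{i\to j}x_{j\to i}$ for $ij\in E$ (their trailing terms being $z^2$ or another antipodal pair), which gives family (i); of a binomial of degree $\ell$ for each oriented $(2\ell-1)$-cycle equating the product of $\ell$ of its oriented edges with $z$ times the product of the reverses of the remaining $\ell-1$ edges, whose leading term is the chosen $\ell$-subset---this yields (ii), and the leading term does not depend on $<$ precisely because its trailing term carries the extra variable $z$; and of a binomial of degree $\ell$ for each oriented $2\ell$-cycle relating its two alternating ``halves'' (each appropriately oriented) with no occurrence of $z$, so that now the order $<$ decides which half contains the $<$-smallest edge of the cycle and is therefore the trailing term---this yields (iii), the leading term being the half not containing that edge. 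Combining this identification with the equivalences above proves the statement.

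The hard part is the middle step: extracting the leading monomials from \cite[Proposition 3.8]{HJM} and, in particular, tracking the asymmetry between odd and even cycles---it is the presence of the interior-point variable $z$ in the trailing term of an odd-cycle relation, versus its absence in an even-cycle relation, that makes family (ii) independent of $<$ while forcing the $<$-minimal-edge condition in (iii). The auxiliary facts I use---that the initial ideal is squarefree so that passing to its radical is harmless, and that restricting $\mathcal{T}$ to $\partial\Pc_G$ does not affect the non-faces built from oriented edges---are routine given that $\Pc_G$ is reflexive, terminal and admits a unimodular triangulation, but I would still spell them out.
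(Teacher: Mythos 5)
Your proposal is correct and follows exactly the route the paper has in mind: the paper does not spell out a proof but simply points to \cite[Corollary 8.9]{Sturmfels} and the Gr\"obner basis of \cite[Proposition 3.8]{HJM}, and your argument is a careful unpacking of that citation—identifying the squarefree leading terms (hence the initial ideal is already radical), noting that the origin variable $z$ is degrevlex-smallest so that $0$ appears in no minimal non-face and the induced triangulation of $\Pc_G$ is a cone over a triangulation $\Delta_<$ of $\partial\Pc_G$, and reading off families (i)--(iii) from the three kinds of leading monomials. The one detail worth flagging explicitly in the write-up is that $<$ is declared on \emph{unoriented} edges while the variables are indexed by \emph{oriented} edges; this causes no trouble because in each cycle binomial every unoriented edge contributes at most one variable and the antipodal binomial's leading term is fixed by $z$ alone, but it should be said that the extension of $<$ to oriented edges (and hence the tie-break between the two orientations of an edge) is immaterial.
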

We want to remark that the triangulation $\Delta_<$ extends to a regular unimodular triangulation of $\Pc_G$ by coning over the origin. In (iii) an oriented edge $i\to j$ of an oriented cycle $C$ is called \emph{$<$-minimal} if $ij$ is minimal with respect to $<$ among $\{k\ell~:~k\to\ell \in E(C)\}$. It is apparent that the triangulation of \Cref{lem: nonfaces of Delta<} depends on the chosen ordering $<$. However, any edge of $\Pc_G$ is necessarily a face of any such triangulation. Complementing \cite[Theorem 3.1]{HJM} which characterizes facets of symmetric edge polytopes, we provide the following characterization of their edges:

\begin{theorem}\label{Conjecture:edges}
Let $G=([n],E)$ be a graph. 
    Two oriented edges of $G$ form an edge of $\mathcal{P}_G$ if and only if they are not contained in a directed $3$- or $4$-cycle of $G$.
\end{theorem}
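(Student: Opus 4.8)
The plan is to prove the two implications separately, using the explicit description of non-faces from \Cref{lem: nonfaces of Delta<} together with the remark that every edge of $\Pc_G$ lies in \emph{every} triangulation $\Delta_<$.

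For the ``only if'' direction, suppose two oriented edges $i\to j$ and $k\to\ell$ do lie in a directed $3$- or $4$-cycle $C$ of $G$. If $C$ is a directed $3$-cycle, then $\{i\to j, k\to\ell\}$ is a $2$-element subset of an oriented $(2\cdot2-1)$-cycle, so by part (ii) of \Cref{lem: nonfaces of Delta<} it is a non-face of every $\Delta_<$; since every edge of $\Pc_G$ must be a face, the pair cannot be an edge of $\Pc_G$. If $C$ is a directed $4$-cycle, then $\{i\to j, k\to \ell\}$ is a $2$-element subset of an oriented $2\cdot 2$-cycle, and we may simply choose the total order $<$ so that the $<$-minimal edge of $C$ is one of the \emph{other} two edges of $C$; then part (iii) applies and again the pair is a non-face of $\Delta_<$, hence not an edge of $\Pc_G$. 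One has to be a little careful in the degenerate cases where the two oriented edges share a vertex or form an antipodal pair, but in those cases the argument is even easier: an antipodal pair is never an edge by part (i), and two distinct oriented edges sharing a vertex that lie in a common directed $3$- or $4$-cycle are handled exactly as above.

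For the ``if'' direction, suppose the two oriented edges $u = e_{i,j}$ and $w = e_{k,\ell}$ are not contained in any directed $3$- or $4$-cycle of $G$; we must exhibit an affine functional that is maximized on $\Pc_G$ precisely at the segment $\conv(u,w)$, or equivalently show that $\{u,w\}$ is a face of some $\Delta_<$. The cleanest route is the latter: it suffices to produce a single total order $<$ on $E$ for which $\{u,w\}$ is a face of $\Delta_<$, i.e., for which $\{u,w\}$ contains none of the forbidden subsets (i)--(iii). Subset type (i) is excluded because $\{u,w\}$ is not an antipodal pair (an antipodal pair sits in a directed $2$-cycle, and more to the point, would force $i\ell$-type coincidences that put $u,w$ in a directed $2$-cycle, hence also in no shorter cycle — in any case one checks directly $u\ne -w$). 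For a $2$-element subset of an oriented $(2\ell-1)$-cycle with $\ell = 2$ we need $\{u,w\}$ not to lie in a directed $3$-cycle, which is our hypothesis; there is no constraint from $\ell=1$ beyond type (i). For type (iii) with $\ell=2$ we need that whenever $\{u,w\}$ is contained in an oriented $4$-cycle $C$, the pair does contain the $<$-minimal edge of $C$. By hypothesis $C$, if it exists as an \emph{oriented} $4$-cycle containing both $u$ and $w$, is not a \emph{directed} $4$-cycle; this means that in the cyclic orientation of $C$ induced by $G$, the edges are not all coherently oriented, so $u$ and $w$ together with the remaining two edges of $C$ do not form a consistent directed cycle — hence we are free to pick $<$ so that the $<$-minimal edge of every such $4$-cycle is $ij$ or $k\ell$. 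The main point to nail down is that there are only finitely many relevant $4$-cycles and a single order can satisfy all the corresponding constraints simultaneously; I would argue this by first ordering $ij$ and $k\ell$ to be the two globally $<$-smallest edges of $G$, so that \emph{every} $4$-cycle through either of them automatically has its $<$-minimal edge inside $\{ij,k\ell\}$, which kills all type-(iii) obstructions at once. With such an order, $\{u,w\}$ avoids all of (i)--(iii) and is therefore a face of $\Delta_<$, hence an edge of $\Pc_G$.

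I expect the main obstacle to be the bookkeeping around \emph{oriented} versus \emph{directed} cycles in the type-(iii) analysis: the non-face condition (iii) refers to subsets of \emph{oriented} $2\ell$-cycles (i.e., cycles of $G$ with the clockwise/counterclockwise orientation), and one must carefully translate ``$\{u,w\}$ is not in a directed $4$-cycle'' into a statement about which edge of each such oriented $4$-cycle can be made $<$-minimal. A secondary subtlety is ensuring the ``only if'' direction covers the case where the two oriented edges belong to a directed $4$-cycle but one might worry whether some \emph{other} triangulation makes them a face — this is resolved by the observation, already recorded after \Cref{lem: nonfaces of Delta<}, that an edge of $\Pc_G$ must be a face of \emph{every} $\Delta_<$, so exhibiting one $<$ in which the pair is a non-face suffices. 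Modulo this care, both directions reduce to the $\ell = 2$ instances of \Cref{lem: nonfaces of Delta<}.
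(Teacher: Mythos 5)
Your ``only if'' direction is fine and is the same as the paper's: exhibit one order $<$ under which the pair violates condition (ii) or (iii) of \Cref{lem: nonfaces of Delta<}, then invoke the fact that an edge of $\Pc_G$ must be a face of \emph{every} $\Delta_<$.

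The ``if'' direction, however, has a genuine gap. You assert that ``$\{u,w\}$ is an edge of $\Pc_G$'' is \emph{equivalent} to ``$\{u,w\}$ is a face of some $\Delta_<$,'' and you then verify the latter by arranging $ij$ and $k\ell$ to be the two globally $<$-smallest edges. But these two conditions are not equivalent: $\Delta_<$ triangulates each higher-dimensional face of $\partial\Pc_G$, so it contains many $1$-simplices (``diagonals'') that are \emph{not} edges of $\Pc_G$. Concretely, take $G=C_4$, $u=e_{1,2}$, $w=e_{2,3}$, and any order $<$ in which $12$ is smallest. Then $\{u,w\}$ contains the $<$-minimal edge of the unique oriented $4$-cycle through it, so by \Cref{lem: nonfaces of Delta<} it \emph{is} a face of $\Delta_<$. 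Yet $u,w$ lie on the directed $4$-cycle $1\to 2\to 3\to 4\to 1$, and one checks directly that no linear functional is maximized on $\Pc_{C_4}$ (the $3$-cube) exactly at $\{e_{1,2},e_{2,3}\}$; requiring $a_1-a_2=a_2-a_3=c$ and $a\cdot e_{4,3}<c$, $a\cdot e_{1,4}<c$ forces both $a_4<a_2$ and $a_4>a_2$. So membership in some $\Delta_<$ does \emph{not} certify being an edge of $\Pc_G$, and your chosen order (with $ij,k\ell$ smallest) only verifies the weaker condition. The fact that the set of $1$-faces of $\Delta_<$ genuinely depends on $<$, while the edge set of $\Pc_G$ does not, is already a warning sign that the two notions cannot coincide for a single $<$. (Requiring $\{u,w\}$ to be a face of \emph{every} $\Delta_<$ would fix this, but proving that from the cycle hypothesis is tantamount to the theorem itself and would be circular.)

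The paper avoids this by doing exactly what you list as the first option and then set aside: it exhibits, for each case of how $\{i,j\}$ and $\{k,\ell\}$ intersect and which of the edges $i\ell,jk$ are present, an explicit vector $a\in\R^n$ with $a^T e_{i,j}=a^T e_{k,\ell}$ strictly larger than $a^T y$ for every other vertex $y$. The absence of a directed $3$- or $4$-cycle is used precisely to rule out the configurations (for instance $jk\in E$ or a common neighbor of $j$ and $k$) that would otherwise make such an $a$ impossible. You should replace the triangulation argument in the ``if'' direction with a direct supporting-hyperplane construction of this kind.
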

\begin{proof}
    The ``only if''-part directly follows from \Cref{lem: nonfaces of Delta<} and the paragraph preceding this theorem. For the reverse statement, let $i\to j, k\to \ell$ oriented edges of $G$ neither lying in a directed $3$- nor in a directed $4$-cycle of $G$. The aim is to construct a supporting hyperplane of $\Pc_G$ only containing the vertices $e_{i,j}$ and $e_{k,\ell}$. More precisely, we construct $a\in \R^n$ such that $a^Te_{i,j}=a^Te_{k,\ell}>a^Ty$ for every vertex $y$ of $\Pc_G$ different from $e_{i,j}$ and $e_{k,\ell}$. We distinguish different cases.

{\sf Case 1:} $\{i,j\}\cap\{k,\ell\}=\emptyset$. Since $i\to j$ and $k\to\ell$ do not lie in an oriented $4$-cycle, we have $|\{i\ell,jk\}\cap E|\leq 1$. If $|\{i\ell,jk\}\cap E|=0$ then it is easy to verify that setting  $a_i=a_k=1$, $a_j=a_\ell=-1$ and $a_m=0$, otherwise, works. If $|\{i\ell,jk\}\cap E|= 1$, then without loss of generality assume $i\ell\in E$. In this case setting $a_i=1$, $a_j=-2$, $a_k=2$, $a_\ell=-1$ and $a_m=0$, otherwise, has the required properties. 

{\sf Case 2:} $\{i,j\}\cap\{k,\ell\}\neq\emptyset$. First assume $i=k$. In this case, we set $a_i=1$, $a_j=a_\ell=-1$ and $a_m=0$, otherwise. Similarly, if $j=\ell$, setting $a_j=-1$, $a_i=a_k=1$ and $a_m=0$, otherwise, works. Finally assume that $i=\ell$ or $j=k$. By symmetry, we only need to consider the case $i=\ell$. Since $i\to j$ and $k\to i$ do not lie in a directed $3$-cycle, it follows that $jk\notin E$. Similarly, as $i\to j$ and $k\to i$ do not lie in a directed $4$-cycle, the vertices $j$ and $k$ do not have common neighbors other than $i$. We can then set $a_j=-2$, $a_k=2$, $a_i=0$, $a_p=-1$ if $jp \in E$, $a_q=1$ if $kq\in E$ and $a_m=0$, otherwise and this is well-defined by the previous arguments. It is again easy to see that this choice of $a$ works.
\end{proof}

\Cref{Conjecture:edges} allows us to characterize simple symmetric edge polytopes in terms of their graphs. 

\begin{proposition}\label{prop:simple}
Let $G=([n],E)$ be a graph with $E\neq \emptyset$. Then $\Pc_G$ is simple if and only if, after removing isolated vertices, $G\in\{P_1,2P_1 ,P_2,C_3,C_4\}$.
\end{proposition}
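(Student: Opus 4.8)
The plan is to use \Cref{Conjecture:edges} to control the degree of each vertex of $\Pc_G$ in any unimodular triangulation $\Delta_<$ of $\partial\Pc_G$ coming from \Cref{lem: nonfaces of Delta<}, since $\Pc_G$ is simple precisely when every vertex of such a triangulation has degree equal to $\dim\Pc_G - 1$, equivalently when every vertex of $\Pc_G$ lies on exactly $\dim\Pc_G$ edges. First I would recall that if $G$ has $c$ connected components, $n$ vertices and $m$ edges, then $\dim\Pc_G = m$ when $G$ is a forest and $\dim\Pc_G = n - c$ in general (more precisely $\dim\Pc_G = n-1$ for connected $G$ with at least one cycle, and $m$ for a forest); isolated vertices contribute nothing to $\Pc_G$, which justifies removing them. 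So the statement to prove becomes: each oriented edge $i\to j$ of $G$ forms an edge of $\Pc_G$ with exactly $\dim\Pc_G$ other oriented edges, and I want to pin down exactly which finite graphs satisfy this.

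The key computation is, fixing an oriented edge $e = i\to j$, to count the oriented edges $e'$ of $G$ for which $\{e,e'\}$ is \emph{not} an edge of $\Pc_G$, and then show this count has the right value exactly in the listed cases. By \Cref{Conjecture:edges}, $\{e,e'\}$ fails to be an edge of $\Pc_G$ iff $e'$ is the antipode $j\to i$ of $e$, or $e$ and $e'$ together lie in a directed $3$-cycle or directed $4$-cycle. A directed $3$-cycle through $i\to j$ is determined by a common... rather, by a vertex $k$ with $jk, ki \in E$, and it contributes the two oriented edges $j\to k$ and $k\to i$; a directed $4$-cycle through $i\to j$ is determined by an ordered pair $(k,\ell)$ with $jk, k\ell, \ell i \in E$, contributing the three oriented edges $j\to k$, $k\to\ell$, $\ell\to i$. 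I would organize the count by the number of vertices of $G$ outside $\{i,j\}$ and the number of edges: the total number of oriented edges of $G$ is $2m$, one of which is $e$ itself, so $e$ has $2m-1$ candidate partners, and I must subtract the forbidden ones (being careful about overlaps, e.g. an oriented edge $j\to k$ that arises both from a triangle and from a $4$-cycle). Setting up the inequality ``number of good partners $\le m$ (forest case) or $\le n-1$ (cyclic case)'' with equality characterizing simplicity, and pushing it through, forces $G$ to be extremely small; in particular any vertex of degree $\ge 3$, or any two independent edges beyond a very small configuration, creates too few or too many edges at some vertex of $\Pc_G$. A clean way to finish is: simplicity is clearly inherited by taking the graph spanned by the edges incident to a fixed edge, so it suffices to rule out $P_3$ (a path on $3$ edges), the star $K_{1,3}$, the triangle with a pendant edge, $C_5$, and $K_4$ minus an edge as forbidden ``excluded configurations,'' then observe that any graph avoiding all of these and having a nonempty edge set is one of $P_1$ (a single edge), $2P_1$ (two disjoint edges — wait, this needs a short separate argument since its two edges are independent), $P_2$, $C_3$, $C_4$; finally verify directly, using \Cref{Conjecture:edges} or a direct description of $\Pc_G$ for these five graphs ($\Pc_{P_1}$ is a segment, $\Pc_{2P_1}$ and $\Pc_{P_2}$ are squares, $\Pc_{C_3}$ is an octahedron $\Diamond_3$, $\Pc_{C_4}$ is $\Diamond_3$ as well after a unimodular change of coordinates, or an octahedron), that each of these is indeed simple.

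I expect the main obstacle to be the bookkeeping in the ``forbidden partner'' count: a single oriented edge incident to $j$, say $j\to k$, can be forbidden for multiple structural reasons ($jk$ could close a triangle $i\to j\to k\to i$ and also be the first edge of several directed $4$-cycles), so a naive subtraction overcounts, and I will need an inclusion–exclusion or a direct case analysis on the local structure around the edge $ij$ to get a sharp count. The secondary subtlety is the disconnected case $2P_1$: since its two edges are vertex-disjoint, \Cref{Conjecture:edges} says any oriented edge of one component forms an edge of $\Pc_G$ with any oriented edge of the other (no shared $3$- or $4$-cycle is possible), and one checks the degree arithmetic works out; I would handle this as a small standalone remark rather than folding it into the connected analysis. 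Once the local count is sharp, the characterization falls out quickly because equality in the degree condition is so rigid that it immediately bounds $n$ and $m$.
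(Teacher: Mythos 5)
Your overall strategy---using \Cref{Conjecture:edges} to count the edges of $\Pc_G$ containing a fixed vertex $e_{i,j}$ and comparing with $\dim\Pc_G$---is the same one the paper uses, but the middle of your argument has two genuine gaps.

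First, the step ``simplicity is clearly inherited by taking the graph spanned by the edges incident to a fixed edge'' is neither justified nor obviously true. The degree of $e_{i,j}$ in $\Pc_G$ depends both on how many partners are forbidden by $3$- and $4$-cycles (a local quantity) and on $\dim\Pc_G$ (a global quantity), so passing to a local subgraph is not an innocuous reduction. The paper avoids any hereditary claim; instead it builds a subgraph $H\subseteq G$ by deleting from $G$ exactly those edges $st$ for which $\{i,j,s,t\}$ spans a $K_4$, proves directly that the degree of $e_{i,j}$ in $\Pc_G$ is at least $|E(H)|-1$, deduces $|E(H)|\leq n$, and finishes with a structural free-sum argument via \cite[Proposition 4.2]{OT-2021}. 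That construction is what resolves the overcounting issue you flag; no inclusion--exclusion is needed.

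Second, the forbidden-subgraph characterization you propose is false as stated: for instance $3P_1$, $2P_2$, and $P_1\cup P_2$ all avoid $P_3$, $K_{1,3}$, the triangle with a pendant, $C_5$, and $K_4$ minus an edge, yet none of them lies in $\{P_1,2P_1,P_2,C_3,C_4\}$. These graphs are indeed not simple---e.g.\ $\Pc_{3P_1}$ is the $3$-dimensional cross-polytope $\Diamond_3$, which is simplicial but not simple---but they must be excluded by a separate argument; the natural one is the free-sum decomposition the paper uses for disconnected graphs, which you invoke only for $2P_1$. Finally, a small but consequential slip in your direct checks: $\Pc_{C_3}$ is a hexagon and $\Pc_{C_4}$ is a $3$-cube, \emph{not} $\Diamond_3$; since $\Diamond_3$ is not simple, the identifications as written would break the ``if'' direction for $C_3$ and $C_4$, even though both the hexagon and the $3$-cube are simple.
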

\begin{proof}
    It follows by direct computation that  $\Pc_G$ is a $1$-simplex, a $4$-gon, a $4$-gon, a $6$-gon and a $3$-cube if $G$ is equal to $P_1$, $2P_1$, $P_2$, $C_3$ and $C_4$, respectively.\\
    Assume that $G$ is connected and fix an oriented edge $i \to j$. Observe that if there exists an edge $k\ell$ such that both $k\to\ell$ and $\ell\to k$ lie in a $3$- or $4$-cycle together with $i\to j$, then the subgraph of $G$ induced by the vertices $i,j,k,\ell$ (which must be all distinct) is isomorphic to $K_4$. Consider then the connected subgraph $H$ of $G$ obtained by removing all edges $st$ such that the induced subgraph of $G$ on the vertex set $\{i,j,s,t\}$ is isomorphic to $K_4$. 
    By construction, for every edge $k\ell$ of $H$ different from $ij$, at least one of the oriented edges $k\to \ell$ and $\ell\to k$ does not lie in a $3$- or $4$-cycle with $i \to j$. By \Cref{Conjecture:edges}, this implies that at least one of the vertices $e_{k,\ell}$ or $e_{\ell,k}$ is adjacent to $e_{i,j}$ both in $\mathcal{P}_H$ and in $\mathcal{P}_G$. As all edges of $\mathcal{P}_H$ containing the vertex $e_{i,j}$ are also edges of $\mathcal{P}_G$ (and vice versa), we conclude that the number of edges containing $e_{i,j}$ in $\mathcal{P}_G$ is greater than or equal to $|E(H)|-1$. Hence, $\mathcal{P}_G$ cannot be simple if $|E(H)|>\dim(\mathcal{P}_G)+1=n$. In order for $\Pc_G$ to be simple, we must hence have $|E(H)|\in\{n-1,n\}$. If $|E(H)|=n-1$, then $H$ is a tree, while if $|E(H)|=n$, then $H$ can be built starting from a cycle and taking successive $1$-clique sums with single edges. Both cases can only happen if $G = H$, since otherwise $H$ would contain at least two distinct $3$-cycles. Using \cite[Proposition 4.2]{OT-2021}, it follows that if $G$ is connected and $\mathcal{P}_G$ is simple, then $\mathcal{P}_G$ is the free sum of the symmetric edge polytope of a cycle and some segments (since the symmetric edge polytope of $P_1$ is a segment). Since the free sum of two polytopes of dimension greater than zero is simple if and only if the polytopes are segments, we are left with the following possibilities: either $G\in\{P_1,P_2\}$ or $G\cong C_k$, for some $k\geq 3$. For analogous reasons, if $G$ is not connected and $\Pc_G$ is simple, $\Pc_G$ must be the free sum of two segments, i.e., $G\cong 2P_1$, the disjoint union of two edges. Finally, assume that $G\cong C_k$, for some $k\geq 5$. Applying again \Cref{Conjecture:edges}, we conclude that the number of edges of $\mathcal{P}_G$ containing $e_{i,j}$ equals $2(k-1)>k-1=\dim(\mathcal{P}_G)$ and hence $\Pc_G$ is not simple. 
\end{proof}
\section{\texorpdfstring{Nonnegativity of $\gamma_2$}{Nonnegativity of gamma\_2}} \label{sec:nonnegativity}
The aim of this section is to prove \Cref{thm:A}; namely, to show that $\gamma_2(\Pc_G)$ is nonnegative for any graph $G$, and to characterize which graphs attain the equality $\gamma_2(\Pc_G)=0$.
In \cite[Corollary 3.1]{OH14} the authors prove that, when $G$ is connected, $\Pc_G$ has a unimodular triangulation, and this was made more explicit in \cite{HJM} by providing a Gr\"obner basis. This triangulation depends on an order $<$ on the set of edges $E$, and, in particular,  different orders might yield non-isomorphic simplicial complexes. However, all of them are cones over the corresponding triangulation $\Delta_<$ of the boundary of $\Pc_G$ (see \Cref{lem: nonfaces of Delta<}). As the $h^\ast$-vector of a lattice polytope which admits a unimodular triangulation is equal to the $h$-vector of such a triangulation, we can write $\gamma_2(\Pc_G)$ in terms of the number of vertices and edges of $\Delta_<$, and these numbers do not depend on the order $<$. 
Our first goal is to write the number $\gamma_2(\Pc_G)$ as a function of certain invariants of the graph. For this aim, given a graph $G=([n],E)$ and a fixed total order $<$ on $E$, let
\[
    n_1(G):=\binom{2|E|}{2}-|E|-f_1(\Delta_<).
\]

In other words, $n_1(G)$ equals the number of edges of the $(|E|-1)$-dimensional cross-polytope on vertex set $\{e_{i,j},e_{j,i} ~:~ ij\in E\}$ that are non-edges of $\Delta_{<}$. By \Cref{lem: nonfaces of Delta<}, $n_1(G)$ is equal to the number of pairs of oriented edges of $G$ where the two unoriented edges are different and the pair satisfies at least one of the following:
\begin{itemize}
    \item[(i)] the pair is contained in an oriented $3$-cycle;
    \item[(ii)] the pair is contained in an oriented $4$-cycle, and none of its edges is the $<$-minimal edge of such a cycle.
\end{itemize}
We call a pair of oriented edges satisfying at least one of these two conditions a \emph{bad pair} and say that it is \emph{supported} on the corresponding pair of unoriented edges. We use $n_1(G)$ to express  $\gamma_2(\Pc_G)$ explicitly, as follows.
\begin{lemma}\label{lem: gamma1-gamma2 for G}
    Let $G$ be a connected graph. Then, 
    \begin{equation*}
        \gamma_1(\Pc_G)=2\cy(G),
    \end{equation*} 
    and
    \begin{equation}
        \gamma_2(\Pc_G)=2\cy(G)(\cy(G)+2)-n_1(G). \label{eq: gamma2}
    \end{equation}
   \end{lemma}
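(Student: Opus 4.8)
The plan is to compute the relevant $h^\ast$-entries (equivalently, $h$-entries of $\Delta_<$) in low degrees directly from the $f$-vector of $\Delta_<$, and then translate into $\gamma$-coordinates via \eqref{eq:gamma_intro}. Recall $\Pc_G$ is reflexive of dimension $d = n-1$ (for $G$ connected), so $\Delta_<$ is an $(n-2)$-dimensional simplicial sphere, and its $h$-polynomial equals $h^\ast(\Pc_G,x)$. The key numerical inputs are $f_0(\Delta_<) = 2|E|$ (each edge of $G$ contributes an antipodal pair of vertices, and by terminality no vertices of $\Pc_G$ are lost) and $f_1(\Delta_<) = \binom{2|E|}{2} - |E| - n_1(G)$, which is exactly how $n_1(G)$ was defined. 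From the standard inversion of \eqref{eqn: h-vector} one gets $h_0 = 1$, $h_1 = f_0 - d = 2|E| - (n-1)$, and $h_2 = f_1 - (d-1)f_0 + \binom{d}{2}$. Since $\cy(G) = |E| - n + 1$ for connected $G$, we have $h_1 = |E| - n + 1 + |E| = 2\cy(G) + (n-1) - (n-1) $; more carefully $h_1 = 2|E| - n + 1$, and one checks this is $2\cy(G) + (n-1)$ is wrong — rather $h_1 = d + 2\cy(G)$ would give the palindromic middle, so let me just say: compute $h_1$ and $h_2$ as explicit polynomials in $|E|$ and $n$, substitute $|E| = \cy(G) + n - 1$, and simplify.

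Next I would extract $\gamma_1$ and $\gamma_2$ from \eqref{eq:gamma_intro}. Expanding the right side of \eqref{eq:gamma_intro} and matching coefficients of $t^1$ and $t^2$ yields the classical formulas $\gamma_1 = h_1 - d$ and $\gamma_2 = h_2 - (d-1)h_1 + \binom{d}{2}$ (valid because $d \ge 4$, so the $t^2$-coefficient receives contributions only from the $i=0,1,2$ terms — I should note the small-$n$ cases where $d < 4$ separately, though there $\gamma_2$ is vacuous or trivially checked). Plugging in $h_1 = 2|E| - n + 1$ gives $\gamma_1 = 2|E| - n + 1 - (n-1) = 2|E| - 2n + 2 = 2\cy(G)$, which is the first claimed identity. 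For $\gamma_2$, substitute the expressions for $h_1$ and $h_2$ (the latter containing the single graph-dependent quantity $n_1(G)$, since all the binomial terms depend only on $|E|$ and $n$) into $\gamma_2 = h_2 - (d-1)h_1 + \binom{d}{2}$. Everything except $-n_1(G)$ is a polynomial in $|E|$ and $n$; after substituting $|E| = \cy(G) + n - 1$, the $n$-dependence must cancel (it has to, since $\gamma_2$ depends only on the abstract $h^\ast$-vector and hence should be expressible through $\cy(G)$ and $n_1(G)$ alone), leaving $2\cy(G)(\cy(G)+2) - n_1(G)$.

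The main obstacle is not conceptual but bookkeeping: one must be careful that $f_0(\Delta_<) = 2|E|$ exactly — i.e., that every vertex $\pm(e_i - e_j)$ of $\Pc_G$ actually appears in the triangulation and that distinct oriented edges give distinct vertices (true since $G$ is simple and has no loops, so $e_i - e_j \ne \pm(e_k - e_\ell)$ unless $\{i,j\} = \{k,\ell\}$), and that the Ehrhart $h^\ast = h(\Delta_<)$ identification is legitimate, which is guaranteed by \Cref{lem: nonfaces of Delta<} together with the reflexivity of $\Pc_G$ recalled in the preliminaries. The actual algebra reduces to verifying a single polynomial identity in two variables and then performing one substitution; I would carry it out by writing $h_1, h_2$ explicitly, forming the combination for $\gamma_2$, and collecting terms, checking at the end that the coefficient of every monomial in $n$ alone vanishes. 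One should also double-check the edge case $n = 4$ (where $d = 3$ and $\lfloor d/2 \rfloor = 1$, so $\gamma_2$ is not even defined) is excluded from the statement's scope, consistent with the hypothesis structure in \Cref{thm:A}.
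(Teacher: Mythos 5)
Your overall strategy matches the paper's exactly: compute $f_0(\Delta_<)=2|E|$ and $f_1(\Delta_<)=\binom{2|E|}{2}-|E|-n_1(G)$, pass to $h_1,h_2$ via the standard $h$--$f$ relations, and then pass to $\gamma_1,\gamma_2$. Your $\gamma_1$ derivation is correct. However, the stated conversion formula for $\gamma_2$ is wrong, and this is not a typo you would catch by checking that the $n$-dependence cancels, since the error is a multiple of $\cy(G)$.

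Concretely: expanding $\sum_{i\ge 0}\gamma_i\, t^i(t+1)^{d-2i}$ and reading off the coefficient of $t^2$ gives
\[
h_2 \;=\; \binom{d}{2}\gamma_0 + (d-2)\gamma_1 + \gamma_2,
\]
since the $i=1$ term contributes $\gamma_1\binom{d-2}{1}$ (the binomial comes from $(t+1)^{d-2}$, not $(t+1)^{d-1}$). Hence the correct inversion is
\[
\gamma_2 \;=\; h_2 - \binom{d}{2} - (d-2)\gamma_1 \;=\; h_2 - (d-2)h_1 + d(d-2) - \binom{d}{2},
\]
which is precisely what the paper uses (with $d=n-1$, so $d-2=n-3$). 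Your proposed formula $\gamma_2 = h_2 - (d-1)h_1 + \binom{d}{2}$ differs from this by $h_1-d=\gamma_1=2\cy(G)$; it appears you transplanted the coefficient pattern from the $h$--$f$ relation $h_2=f_1-(d-1)f_0+\binom{d}{2}$, but the $\gamma$--$h$ relation has a different shift. If you carried the algebra through with your formula you would land on $2\cy(G)(\cy(G)+1)-n_1(G)$ rather than the claimed $2\cy(G)(\cy(G)+2)-n_1(G)$. A quick sanity check on $K_5$ (where $\cy=6$, $d=4$, and the known value is $\gamma_2=6$) confirms the discrepancy. Once the coefficient $(d-1)$ is replaced by $(d-2)$ and the constant adjusted as above, the rest of your bookkeeping plan goes through and reproduces the paper's computation.
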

\begin{proof}
    Let $G=([n],E)$. The next computation shows the first statement:
    \begin{align*}
        \gamma_1(\Pc_G) &= h^\ast_1(\Pc_G)-(n-1)
        = h_1(\Delta_{<})-(n-1)\\
        &= f_0(\Delta_{<})-2(n-1)=2(|E|-n+1)=2\cy(G),
    \end{align*}
    where the first equality follows from the definition of $\gamma_1$ and the fact that $\Pc_G$ is $(n-1)$-dimensional. Using the first statement, we can further show \eqref{eq: gamma2}:
    \begin{align*}
        \gamma_2(\Pc_G) &=h^\ast_2(\Pc_G)-\binom{n-1}{2}-(n-3)\gamma_1(\Pc_G)\\
        &=h_2(\Delta_{<})-\binom{n-1}{2}-2(n-3)\cy(G)\\
        &=f_1(\Delta_{<})-(n-2)f_0(\Delta_{<})+\binom{n-1}{2}-\binom{n-1}{2}-2(n-3)\cy(G)\\
        &=\binom{2|E|}{2}-|E|-n_1(G)-2(n-2)|E|-2(n-3)\cy(G)\\
        &=2|E|(|E|-n+1)-n_1(G)-2(n-3)\cy(G)\\
        &=2\cy(G)(|E|-n+3)-n_1(G)=2\cy(G)(\cy(G)+2)-n_1(G).
    \end{align*}
\end{proof}

Next, we present the main result of this section.

\begin{theorem}\label{thm: gamma2 >0}
Let $G$ be a graph. Then, $\gamma_2(\Pc_G)\geq 0$.
\end{theorem}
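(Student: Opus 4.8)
The plan is to use the explicit formula $\gamma_2(\Pc_G)=2\cy(G)(\cy(G)+2)-n_1(G)$ from \Cref{lem: gamma1-gamma2 for G} and reduce the claim to showing that $n_1(G)$, the number of bad pairs, is bounded above by $2\cy(G)(\cy(G)+2)$. Since the $h^*$-vector is additive under $1$-clique sums (i.e. $\Pc_{G}$ decomposes as a free sum over $2$-connected components up to segments, cf.\ \cite[Proposition 4.2]{OT-2021}), and both $\cy$ and $n_1$ are additive over $2$-connected components, we may assume $G$ is $2$-connected. The key structural tool should be an open ear decomposition $G = C \cup P_1 \cup \dots \cup P_{g-1}$, where $g=\cy(G)$ is the number of ears, $C$ a cycle, and each $P_i$ an open ear. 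The strategy is to bound the number of directed $3$-cycles and $4$-cycles of $G$ (and more carefully, the number of bad pairs they support) in terms of $g$.

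First I would set up the bookkeeping: each bad pair is supported on an unordered pair of edges $\{e,f\}$ that lie together in a common (unoriented) $3$- or $4$-cycle. So it suffices to bound the number of such pairs of edges, weighted by how many orientations make them ``bad'' — for a $3$-cycle all $\binom{3}{2}=3$ pairs of edges contribute bad pairs (in both cyclic orientations, giving $2\cdot 3$ ordered configurations but the count must be done carefully to avoid overcounting pairs supported in several cycles), and for a $4$-cycle the condition (ii) excludes the $<$-minimal edge, cutting the count down. The cleanest route is probably to prove directly that $n_1(G)\le 2g^2+4g$ by induction on the ear decomposition: adding an ear $P_i$ can create new $3$- and $4$-cycles only by combining the new path with paths already present, and I would argue that the number of \emph{new} bad pairs created is at most something like $4(i) $ or a comparable linear-in-current-ear-index bound, so that summing over $i=1,\dots,g-1$ (plus the base case $n_1(C)$, which is $0$ unless $C$ has length $3$ or $4$) gives a total at most $2g^2+4g$.

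The main obstacle I expect is controlling the interaction of many short cycles: when several ears are short and attach to overlapping vertex sets, a single pair of edges can lie in many $3$- or $4$-cycles, and conversely one new ear can participate in many new short cycles, so the naive "each ear adds $O(1)$ bad pairs" estimate can fail. The delicate part will be to show that even in these dense configurations — e.g.\ many parallel length-$2$ ears between two vertices, which is exactly the near-extremal family $G_n$ and $K_{2,n-2}$ from \Cref{thm:A} — the count of bad pairs still does not exceed $2g^2+4g$, and to identify precisely which local configurations are "expensive." I would handle this by a careful case analysis on the lengths of the ears meeting at each vertex, isolating the configurations (two vertices joined by $\ge 2$ short paths) that generate quadratically many short cycles and checking the inequality is still satisfied there, using the fact that such configurations also force $\cy(G)$ to be large. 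A secondary technical point is dealing with edges shared between the $<$-minimal role in one $4$-cycle and the bad role in another; here one should note the bound need not be tight and can afford to overcount slightly. Equality analysis (needed for \Cref{cor: gamma_2=0}) is deferred to the next section, so for \Cref{thm: gamma2 >0} it is enough to get the inequality $n_1(G)\le 2g^2+4g$ with room to spare in all but the extremal families.
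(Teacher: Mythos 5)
Your reduction to 2-connected graphs using additivity of $\cy$ and $n_1$ across 2-connected components is correct, and the target inequality $n_1(G)\leq 2\cy(G)(\cy(G)+2)$ via Lemma \ref{lem: gamma1-gamma2 for G} is exactly the right reformulation; this much matches the paper. However, the core of your proposed induction --- bounding the number of \emph{new} bad pairs created by attaching the $i$-th ear by $\approx 4i$ --- is asserted but not proved, and this is precisely where the entire content of the theorem resides. You correctly flag the danger yourself: in dense near-extremal configurations ($K_{2,m}$-like blocks), a single new ear can complete a 4-cycle with \emph{every} previously present parallel ear, so the ``adds $O(1)$ new bad pairs'' heuristic fails, and even the refined $O(i)$ estimate needs justification that is not given.

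A more structural concern is that your ear-by-ear induction does not address the \emph{choice} of ear order. The paper works edge-by-edge rather than ear-by-ear, and Remark \ref{rem: we need to take deg 2 edge} shows this is not a cosmetic difference: there exist 2-connected graphs $G$ and edges $e$ for which $\gamma_2(\Pc_{G\setminus e}) > \gamma_2(\Pc_G)$, i.e.\ removing the wrong edge \emph{increases} $\gamma_2$. So the inductive step is not monotone under arbitrary edge (or ear) removal, and a proof along your lines must either carefully prescribe which ear to attach next, or establish a uniform worst-case bound that holds regardless of order; you do neither. The paper handles this by splitting into three cases (Propositions \ref{prop: no 3- 4-cycles}, \ref{prop: deg 2 vertex}, \ref{prop: super long}) keyed to whether there is an edge outside all 3- and 4-cycles, or a degree-2 vertex, or neither, and in the hardest case constructs an auxiliary subgraph $H$ via a 5-step ear decomposition and proves the inequality $\cy(G)\geq\cy(H)+(r_3-r_5)$ by analyzing degree-2 vertices of $H$. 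Your outline would need a lemma of comparable precision --- a sharp per-ear bound valid in all configurations including the tight families $G_n$ and $K_{2,n-2}$ --- and that lemma is the missing piece.
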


The proof of this theorem will require several lemmas and propositions. The strategy is to prove that there exists  an edge $e\in E$ such that $\gamma_2(\Pc_G)\geq \gamma_2(\Pc_{G\setminus e})$, from which the claim follows inductively. We note that, if $e$ is not a bridge of $G$, then $\cy(G\setminus e)=\cy(G)-1$, and \Cref{lem: gamma1-gamma2 for G} directly yields 
    \begin{equation}
        \gamma_2(\Pc_G)-\gamma_2(\Pc_{G\setminus e})=4\cy(G)+2-(n_1(G)-n_1(G\setminus e)). \label{eq: gamma2difference}
    \end{equation}
By the following lemma, we can reduce to the case when $G$ is $2$-connected. 

\begin{lemma}\label{lem: reduce to 2-connected}
    Let $G$ be a graph, and let $G_1,\ldots,G_k$ be its $2$-connected components. Then, $\gamma_1(\Pc_G)=\sum_{i=1}^k \gamma_1(\Pc_{G_i})$, and
    \[
        \gamma_2(\Pc_G) = \sum_{i=1}^k \gamma_2(\Pc_{G_i})+4\sum_{1\leq i < j \leq k} \cy(G_i)\cy(G_j)\geq \sum_{i=1}^k \gamma_2(\Pc_{G_i}).
    \]
\end{lemma}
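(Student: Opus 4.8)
The plan is to reduce to the $2$-connected case by using two facts: first, that the symmetric edge polytope of a graph with $2$-connected components $G_1,\ldots,G_k$ (together with its isolated vertices, which contribute nothing) decomposes as a free sum $\Pc_G \cong \Pc_{G_1}\oplus\cdots\oplus\Pc_{G_k}$ up to the lattice generated, so that the $h^\ast$-polynomial factors as $h^\ast(\Pc_G,t)=\prod_{i=1}^k h^\ast(\Pc_{G_i},t)$; and second, that under such a product the $\gamma$-polynomials multiply as well, i.e. $\gamma(\Pc_G,t) = \prod_{i=1}^k \gamma(\Pc_{G_i},t)$. The latter is a standard fact: if $P(t)=\sum \gamma_i^P t^i(1+t)^{a-2i}$ and $Q(t)=\sum \gamma_j^Q t^j(1+t)^{b-2j}$ are palindromic of degrees $a$ and $b$, then $PQ$ is palindromic of degree $a+b$ with $\gamma$-coefficients obtained by multiplying out, since $(1+t)^{a-2i}(1+t)^{b-2j}=(1+t)^{(a+b)-2(i+j)}$. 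The free-sum decomposition of $\Pc_G$ along $2$-connected components is essentially \cite[Proposition 4.2]{OT-2021}, already invoked in the proof of \Cref{prop:simple}.

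Granting these two reductions, I would argue as follows. Write $d_i=\dim(\Pc_{G_i})$ and recall $\gamma_0(\Pc_{G_i})=1$ for each $i$. Extracting the coefficient of $t^1$ from the product $\gamma(\Pc_G,t)=\prod_i \gamma(\Pc_{G_i},t)$ gives
\[
\gamma_1(\Pc_G) = \sum_{i=1}^k \gamma_1(\Pc_{G_i}),
\]
which by \Cref{lem: gamma1-gamma2 for G} equals $\sum_i 2\cy(G_i)=2\cy(G)$, consistent with the cyclomatic number being additive over $2$-connected components. Extracting the coefficient of $t^2$ gives
\[
\gamma_2(\Pc_G) = \sum_{i=1}^k \gamma_2(\Pc_{G_i}) + \sum_{1\leq i<j\leq k}\gamma_1(\Pc_{G_i})\gamma_1(\Pc_{G_j}),
\]
and substituting $\gamma_1(\Pc_{G_i})=2\cy(G_i)$ turns the second sum into $4\sum_{i<j}\cy(G_i)\cy(G_j)$. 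Since each $\cy(G_i)\geq 0$, this cross term is nonnegative, giving the claimed inequality $\gamma_2(\Pc_G)\geq\sum_i\gamma_2(\Pc_{G_i})$.

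\textbf{Main obstacle.} The only genuinely delicate point is justifying that $h^\ast(\Pc_G,t)$ factors as the product over $2$-connected components, i.e. pinning down the free-sum statement in the form needed. One has to be careful that (a) isolated vertices of $G$ do not affect $\Pc_G$ at all, and (b) gluing $2$-connected components along a single cut vertex corresponds exactly to a free sum of the corresponding polytopes (each embedded in complementary coordinate subspaces after an appropriate affine change), so that the Ehrhart series multiplies; this is where \cite[Proposition 4.2]{OT-2021} does the work. Once the $h^\ast$-product is in hand, the passage to the $\gamma$-product and the extraction of the degree-$1$ and degree-$2$ coefficients are purely formal manipulations, so no real difficulty remains there. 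An alternative, more self-contained route — avoiding the free-sum machinery — would be to prove the two displayed identities directly from \Cref{lem: gamma1-gamma2 for G} by showing $n_1(G)=\sum_i n_1(G_i)$ (a bad pair, being supported on edges of a common short cycle, must lie entirely within one $2$-connected component) and then doing the bookkeeping with $\cy(G)=\sum_i\cy(G_i)$; I would mention this as a remark but carry out the free-sum argument as the main line since it is shorter.
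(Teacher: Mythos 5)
Your proof is correct and takes essentially the same approach as the paper: the paper likewise cites \cite[Proposition 4.2]{OT-2021} to factor the $h^\ast$-polynomial over the $2$-connected components, observes that the $\gamma$-polynomials therefore multiply, extracts the degree-$1$ and degree-$2$ coefficients, and substitutes $\gamma_0(\Pc_{G_i})=1$ and $\gamma_1(\Pc_{G_i})=2\cy(G_i)$ from \Cref{lem: gamma1-gamma2 for G}. (Incidentally, the alternative route you sketch via $n_1(G)=\sum_i n_1(G_i)$ and \eqref{eq: gamma2} is exactly the argument the authors drafted and then commented out of the source.)
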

\begin{proof}
  By \cite[Proposition 4.2]{OT-2021}, the $h^*$-polynomial of $\Pc_G$ is the product of the $h^*$-polynomials of the polytopes $\Pc_{G_i}$. The same holds for their $\gamma$-polynomials, and hence we obtain that $\gamma_2(\Pc_G)=\sum_{i=1}^k (\prod_{j\neq i}\gamma_0(\Pc_{G_j}))\gamma_2(\Pc_{G_i})+\sum_{1\leq i<j\leq k}(\prod_{\ell\neq i,j}\gamma_0(\Pc_{G_{\ell}}))\gamma_1(\Pc_{G_i})\gamma_1(\Pc_{G_j})$. We conclude using that $\gamma_1(\Pc_{G_i})=2\cy(G_i)$ and $\gamma_0(\Pc_{G_i})=1$ hold for every $i$.

\end{proof} 
\
In particular, proving nonnegativity of $\gamma_2(\Pc_G)$ for every $2$-connected graph $G$ is sufficient to prove the statement for every graph.
Our study is divided into cases, which we deal with in Propositions \ref{prop: no 3- 4-cycles}, \ref{prop: deg 2 vertex} and \ref{prop: super long}. We start with the simplest case.
	\begin{proposition}\label{prop: no 3- 4-cycles}
	    Let $G=([n],E)$ be a $2$-connected graph. Assume that there exists $e\in E$ which is not contained in any $3$- or $4$-cycle. Then,
	    \[
	    \gamma_2(\Pc_G)=\gamma_2(\Pc_{G\setminus e})+4\cy(G)+2> \gamma_2(\Pc_{G\setminus e}).
	    \]
	\end{proposition}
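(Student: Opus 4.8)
The plan is to use the difference formula \eqref{eq: gamma2difference}, which applies here since any edge $e$ contained in no $3$- or $4$-cycle of a $2$-connected graph $G$ is certainly not a bridge (a bridge lies in no cycle at all, but in a $2$-connected graph with at least two edges every edge lies on a cycle; alternatively, if $e$ were a bridge then $G$ would not be $2$-connected). Thus $\cy(G\setminus e)=\cy(G)-1$ and
\[
\gamma_2(\Pc_G)-\gamma_2(\Pc_{G\setminus e})=4\cy(G)+2-\bigl(n_1(G)-n_1(G\setminus e)\bigr).
\]
So the entire statement reduces to showing that $n_1(G)=n_1(G\setminus e)$, i.e.\ that removing $e$ destroys no bad pairs and creates none.

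The key observation is that, since $e$ lies in no $3$-cycle and no $4$-cycle of $G$, every oriented $3$-cycle and every oriented $4$-cycle of $G$ avoids $e$ entirely, and hence is also an oriented $3$- or $4$-cycle of $G\setminus e$; conversely every short oriented cycle of $G\setminus e$ is one of $G$ (adding an edge can only create cycles). Therefore $G$ and $G\setminus e$ have exactly the same collection of oriented $3$-cycles and oriented $4$-cycles. By the description of bad pairs recalled before \Cref{lem: gamma1-gamma2 for G} — a bad pair is a pair of distinct oriented edges either lying in an oriented $3$-cycle, or lying in an oriented $4$-cycle none of whose edges is the $<$-minimal one — the set of bad pairs depends only on this collection of short cycles (and, in case (ii), on the restriction of the order $<$ to their edges, which is unaffected by deleting $e$). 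Hence the bad pairs of $G$ and of $G\setminus e$ coincide, giving $n_1(G)=n_1(G\setminus e)$ and the claimed equality; the strict inequality is then immediate from $4\cy(G)+2>0$.

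The only mild subtlety — and the step I would write out most carefully — is the bookkeeping around the $<$-minimal edge in condition (ii): one must note that a pair of oriented edges supported on two unoriented edges $f_1,f_2$ of $G$ different from $e$ lies in an oriented $4$-cycle $C$ of $G$ not containing its $<$-minimal edge if and only if the same holds in $G\setminus e$, which is clear because $C$ itself, together with the relative $<$-order of its edges, is unchanged. I would also remark explicitly that $n_1$ is computed with respect to a fixed total order on $E$, and when passing to $G\setminus e$ we use the induced order on $E\setminus\{e\}$; since no short cycle ever involved $e$, this induced order sees exactly the same comparisons. There is no real obstacle here: the proposition is essentially the statement that an edge "invisible" to all short cycles contributes nothing to $n_1$, combined with the elementary fact that such an edge is not a bridge in a $2$-connected graph.
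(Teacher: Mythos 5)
Your proof is correct and takes exactly the same route as the paper: invoke \eqref{eq: gamma2difference} (valid since $e$ is not a bridge in a $2$-connected graph) and observe that deleting $e$ leaves the set of $3$- and $4$-cycles, and hence $n_1$, unchanged. The paper states this more tersely; your extra remarks about the induced order and the $<$-minimal edge are correct but not needed beyond what the paper already says.
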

	\begin{proof}
    Since $e\in E$ is not contained in any $3$- or $4$-cycle of $G$, its deletion from $G$ does not change the set of $3$- and $4$-cycles of $G$ and, therefore, $n_1(G)=n_1(G\setminus e)$. The claim now follows from \eqref{eq: gamma2difference}.
	\end{proof}

Next, we assume the existence of a vertex of degree $2$. The reason why this case is taken care of separately is that it forces restrictions on which edges can be removed (see \Cref{rem: we need to take deg 2 edge}).
\begin{proposition}\label{prop: deg 2 vertex}
    Let $G=([n],E)$ be a $2$-connected graph. Assume that there exists $e=ij\in E$ such that $\deg_G(i)=2$. Then 
\[
\gamma_2(\Pc_G)\geq \gamma_2(\Pc_{G\setminus e}).
\]
Moreover,  equality holds if and only if every edge of $G$ lies in a $3$- or $4$-cycle together with $e$.
\end{proposition}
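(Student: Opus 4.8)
The plan is to apply Equation~\eqref{eq: gamma2difference}, which needs $e$ not to be a bridge. Since $G$ is $2$-connected and $\deg_G(i)=2$, the other neighbour $j'$ of $i$ provides a path from $i$ to $j$ avoiding $e=ij$, so $e$ is not a bridge and $\cy(G\setminus e)=\cy(G)-1$. Hence \eqref{eq: gamma2difference} turns the proposition into the combinatorial inequality
\[
    n_1(G)-n_1(G\setminus e)\ \le\ 4\cy(G)+2,
\]
with equality if and only if every edge of $G$ lies in a $3$- or $4$-cycle together with $e$. The left-hand side is the number of \emph{destroyed} bad pairs — bad pairs of $G$ that are no longer bad pairs of $G\setminus e$ — and the whole proof is a sufficiently tight count of this quantity.

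\emph{Step 1: locating the destroyed pairs.} Write $e'=ij'$. Since $i$ has degree $2$, every cycle of $G$ through $e$ also uses $e'$; consequently the only $3$-cycle through $e$ is the triangle $T$ on $\{i,j,j'\}$ (present exactly when $jj'\in E(G)$), and the $4$-cycles through $e$ are precisely the cycles $C_x=(i,j,x,j')$ with $x$ ranging over $N:=(N_G(j)\cap N_G(j'))\setminus\{i\}$. Let $H\subseteq G$ be the union of $T$ (if present) and of all the $C_x$: this is a connected ``generalized theta'' graph with poles $j,j'$, and a direct count gives $\cy(H)=|N|+\epsilon$, where $\epsilon=1$ if $jj'\in E(G)$ and $\epsilon=0$ otherwise. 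Since the cycle space of $H$ is a subspace of that of $G$, we have $\cy(H)\le\cy(G)$; and because a $2$-connected graph on at least three vertices is bridgeless, equality here forces $H=G$, i.e.\ it forces every edge of $G$ to lie in a $3$- or $4$-cycle with $e$. Finally, a destroyed bad pair is either supported on a pair of edges containing $e$ (then it is bad in $G$, hence lies in a $3$- or $4$-cycle through $e$), or supported on a pair of edges avoiding $e$ all of whose witnessing $3$- and $4$-cycles use $e$; in both cases the two underlying edges lie in $E(H)$, and the support is a $2$-element subset of $E(T)$ or of some $E(C_x)$.

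\emph{Step 2: the count.} It now suffices to prove the refined bound
\[
    n_1(G)-n_1(G\setminus e)\ \le\ 4\cy(H)+2\ =\ 4(|N|+\epsilon)+2 ,
\]
with equality when $H=G$: together with Step~1 this gives the proposition, and it also shows that when $H\subsetneq G$ one has $\cy(H)\le\cy(G)-1$ and hence $\gamma_2(\Pc_G)\ge\gamma_2(\Pc_{G\setminus e})+4>\gamma_2(\Pc_{G\setminus e})$, so the only possible equality cases are those listed. To prove the refined bound I would run through the candidate supports found in Step~1 — those containing $e$ are $\{e,e'\}$, $\{e,jj'\}$ and $\{e,jx\}$, $\{e,xj'\}$ for $x\in N$; those avoiding $e$ are $\{e',jj'\}$ and $\{jx,xj'\}$, $\{jx,e'\}$, $\{xj',e'\}$ for $x\in N$ — and for each determine how many of the (at most two) bad pairs supported on it are destroyed; here ``at most two'' uses that two oriented edges lying together in a directed cycle must be consistently oriented. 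Summing these contributions and checking that for $H=G$ all the estimates are equalities yields the bound; for this last point one may alternatively invoke the explicit $\gamma$-vectors of the graphs $G_n$ and $K_{2,n-2}$ computed in \cite{HJM,OT-2021}, since when $H=G$ the graph $G$ is exactly one of these two shapes and $G\setminus e$ is the next smaller one with a pendant edge attached.

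The hard part is the count in Step~2. Unlike in Proposition~\ref{prop: no 3- 4-cycles}, deleting $e$ genuinely changes the set of short cycles of $G$, and a single destroyed bad pair may be supported on an edge-pair shared by $T$ and by several of the $C_x$, so the crude ``at most two destroyed pairs per relevant support'' estimate already overshoots $4\cy(H)+2$ once $|N|\ge2$; these overlaps must be resolved exactly. The most delicate point is deciding, for a pair supported on $\{jx,xj'\}$ when $jj'\notin E(G)$, whether it is already a bad pair of $G\setminus e$: this happens when $x$ lies in a second $4$-cycle through $j$ and $j'$, and even then only for certain choices of the order $<$ via clause (iii) of \Cref{lem: nonfaces of Delta<}. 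I would therefore organize the argument by cases according to whether $jj'\in E(G)$, and, in the negative case, whether $|N|=1$ or $|N|\ge2$.
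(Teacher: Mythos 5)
Your architecture coincides with the paper's: reduce via \eqref{eq: gamma2difference} to bounding $n_1(G)-n_1(G\setminus e)$, introduce the subgraph $H$ spanned by the $3$- and $4$-cycles through $e$, compute $\cy(H)=r+s$ (your $|N|+\epsilon$), and use $\cy(G)\ge\cy(H)$ with equality iff $G=H$. The degenerate case with no short cycle through $e$ is handled by \Cref{prop: no 3- 4-cycles}, as in the paper. Up to here everything matches.

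The genuine gap is Step~2, which you explicitly leave open. Your crude ``at most two destroyed pairs per candidate support'' estimate gives $2+4\epsilon+10|N|$, overshooting the target $2+4\epsilon+4|N|$, and you note the overlaps ``must be resolved exactly'' without resolving them --- but this is precisely where the proof lives. The missing idea is that $n_1(G)$ is independent of the order $<$ (it equals $\binom{2|E|}{2}-|E|-f_1(\Delta_<)$ and $f_1(\Delta_<)$ is determined by the $h^\ast$-vector), so one may \emph{choose} $<$ to make the count trivial. The paper takes $e$ and $e'$ to be the two $<$-largest edges; then for every $4$-cycle $C_x=\{e,jx,xj',e'\}$ the $<$-minimal edge is one of $jx,xj'$, so no destroyed bad pair is supported on $\{jx,xj'\}$, and from $C_x$ one gets exactly $4$ destroyed pairs, supported on $\{e,\cdot\}$ and $\{e',\cdot\}$ with $\cdot$ the non-minimal third-party edge; they are genuinely destroyed because $i$ has degree $1$ in $G\setminus e$ and each listed pair contains an edge at $i$. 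Together with the $2$ pairs on $\{e,e'\}$ and, if $jj'\in E$, the $4$ from the triangle, this yields the exact identity $n_1(G)-n_1(G\setminus e)=4r+4s+2$ in one pass and dissolves your worry about ``certain choices of the order $<$''. Your suggestion to verify the tightness when $G=H$ via the known $\gamma$-vectors of $G_n$ and $K_{2,n-2}$ is legitimate, but it only helps with the equality case; the inequality itself still needs the count.
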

\begin{proof}
	If $e$ does not lie in any $3$- or $4$-cycle of $G$, the statement follows from \Cref{prop: no 3- 4-cycles}.
	
	Assume that $e$ is contained in at least one $3$- or $4$-cycle. Let $f=ik$ be the unique edge adjacent to $i$ other than $e$. As $\deg_G(i)=2$, each $3$- or $4$-cycle containing $e$ needs to contain $f$ as well. Hence, if $e$ is contained in some $3$-cycle, then the one on the vertices $i$, $j$ and $k$ is the unique such. Let $s\in\{0,1\}$ and $r\in \N$ be the number of $3$- and $4$-cycles containing $e$ (and hence $f$), respectively. Let $<$ be any order on $E$ for which $e>f>h$ for every $h\in E\setminus\{e,f\}$. By the way $<$ is defined, the minimal element of each $4$-cycle containing $e$ and $f$ is distinct from these.\\
	We now list the bad pairs of $G$ which are not bad pairs of $G\setminus e$. Their number is equal to $n_1(G)-n_1(G\setminus e)$, since every bad pair of $G\setminus e$ is a bad pair of $G$.
	\begin{itemize}
	    \item[-] As $e$ is contained in some $3$- or $4$-cycle, and $e>f>h$ for every $h\in E\setminus\{e,f\}$, the pairs $\{j\to i, i \to k\}$ and $\{k\to i, i \to j\}$ are bad pairs.
	    \item[-] If $s=1$, there are $4$ additional bad pairs, which are contained in an oriented $3$-cycle of $G$, but which are not bad pairs for $G\setminus e$. Namely, the four pairs of oriented edges $\{i\to j, j \to k\}$, $\{k\to j, j\to i\}$, $\{i\to k, k\to j\}$ and $\{j\to k, k\to i\}$. Note that the latter two are not bad  pairs of $G\setminus e$ since they neither lie in a $3$-cycle nor in a $4$-cycle of $G\setminus e$ as $\deg_G(i)=2$. 
	    \item[-] For each $4$-cycle containing $e$, there are $4$ additional pairs. To see this, let $\{ij, j\ell, \ell k, ki\}$ be the edge set of such a $4$-cycle. If the minimal element is $j\ell$, then we get $\{i\to j,\ell \to k\}$, $\{j\to i,k \to \ell\}$, $\{\ell \to k, k \to i\}$ and $\{k\to \ell, i \to k\}$. If instead the minimal element is $\ell k$, then we get $\{i\to j,j \to \ell\}$, $\{j\to i,\ell \to j\}$, $\{j \to \ell, k \to i\}$ and $\{\ell \to j, i \to k\}$. It follows from  $\deg_G(i)=2$, that all of these pairs are not bad pairs of $G\setminus e$.
	\end{itemize}
	We deduce that $n_1(G)-n_1(G\setminus e) = 4r+4s+2$ and hence \eqref{eq: gamma2difference} implies
	\begin{equation}\label{eq: Case 2.1b}
	    \gamma_2(\Pc_G)-\gamma_2(\Pc_{G\setminus e})   =4(\cy(G)-s-r).
	\end{equation}
	To conclude, let $H$ be the subgraph of $G$ consisting of all edges of $G$ which are contained in a $3$- or $4$-cycle together with $e$. By definition $H$ is $2$-connected and $s+r=\cy(H)$. Moreover, since $H$ is a $2$-connected subgraph of the $2$-connected graph $G$, we have that $\cy(G)\geq \cy(H)$. This inequality holds since the cyclomatic number counts the number of ears in any ear decomposition of a graph, and any ear decomposition of $H$ can be completed to one of $G$. Using \eqref{eq: Case 2.1b} this implies $\gamma_2(\Pc_G)-\gamma_2(\Pc_{G\setminus e})\geq 0$.
	
	It remains to characterize the case when $\gamma_2(P_G)=\gamma_2(P_{G\setminus e})$. By the previous argument  $\gamma_2(P_G)=\gamma_2(P_{G\setminus e})=0$ if and only if $\cy(G)=\cy(H)$. As the cyclomatic number of a proper $2$-connected subgraph of $G$ needs to be strictly smaller than $\cy(G)$, it follows that $\cy(G)=\cy(H)$ if and only if $G=H$, which proves the claim. 
	\end{proof}

Before proving the last and main proposition we need a technical lemma.
\begin{lemma}\label{lem: general bound}
    Let $G=([n],E)$ be a $2$-connected graph without vertices of degree $2$  and let $H$ be a $2$-connected subgraph with $k$ vertices of degree $2$. Then, 
    \[
        \cy(G)\geq \cy(H)+\frac{k}{2}.
    \]  
\end{lemma}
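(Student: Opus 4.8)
The plan is to use an open ear decomposition of $G$ that is compatible with the subgraph $H$, together with a careful count of how the degree-$2$ vertices of $H$ get ``resolved'' when building up $G$. Concretely, since $H$ is a $2$-connected subgraph of the $2$-connected graph $G$, we may start an open ear decomposition of $G$ from one of $H$: choose an open ear decomposition $H = H_0 \subseteq H_1 \subseteq \cdots \subseteq H_r = H$ with $r = \cy(H)$ ears, and then extend it by further ears $H_r \subseteq H_{r+1} \subseteq \cdots \subseteq H_{r+s} = G$, where $s = \cy(G) - \cy(H)$. So the statement $\cy(G) \geq \cy(H) + k/2$ is equivalent to showing $s \geq k/2$, i.e.\ that at least $k/2$ of the extending ears are needed.

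The key observation is that every vertex $v$ of $H$ with $\deg_H(v) = 2$ must have $\deg_G(v) \geq 3$, since $G$ has no vertices of degree $2$ by hypothesis (note $\deg_G(v) \geq \deg_H(v) = 2$ always, and $\deg_G(v) \neq 2$). Hence for each such $v$ there is at least one edge of $G$ incident to $v$ that is not in $H$; call such an edge a \emph{witness edge} for $v$. Each witness edge lies in exactly one of the extending ears $H_{r+1} \setminus H_r, \dots, H_{r+s}\setminus H_{r+s-1}$. I would then bound how many of the degree-$2$ vertices of $H$ a single extending ear can be a witness for: an open ear $P$ attached at its two endpoints meets the ``old'' graph only in those two endpoints, so the only vertices of $H$ at which $P$ can contribute a new incident edge are the (at most) two endpoints of $P$ that happen to lie in $H$. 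Wait---an internal vertex of an extending ear could also be an \emph{earlier} (non-$H$) vertex, but it cannot lie in $H$, so it contributes no witness edge for a vertex of $H$; only the two endpoints of the ear matter, and each endpoint is incident to exactly one edge of that ear. Thus each extending ear serves as a witness for at most two of the $k$ degree-$2$ vertices of $H$, giving $s \geq k/2$, which is the claim.

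The main obstacle, and the step requiring the most care, is the bookkeeping around ears whose endpoints coincide with $H$-vertices versus ears whose endpoints are ``new'' vertices introduced by earlier extending ears. I need to argue that a witness edge for $v \in V(H)$ with $\deg_H(v)=2$, say the edge $vw$, when it first appears in some extending ear $H_{r+t}\setminus H_{r+t-1}$, forces $v$ to be an endpoint of that ear: indeed $v$ already lies in $H \subseteq H_{r+t-1}$, so $v$ cannot be an internal vertex of the $t$-th extending ear (internal vertices are disjoint from all earlier ears), hence $v$ is one of its two endpoints. Since each ear has exactly two endpoints and each endpoint is incident to a unique edge of that ear, the ear can be a witness for at most two elements of the degree-$2$ set of $H$. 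Summing over the $s$ extending ears accounts for all $k$ witnessed vertices with multiplicity at least one each, so $k \leq 2s$, i.e.\ $\cy(G) = \cy(H) + s \geq \cy(H) + k/2$. One should double-check the degenerate possibility that $H = G$, in which case $k = 0$ (as $G$ has no degree-$2$ vertices) and the inequality is trivial, so no boundary case is lost.
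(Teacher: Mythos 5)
Your proof is correct and is essentially the paper's argument: extend an ear decomposition of $H$ to one of $G$, observe that each degree-$2$ vertex of $H$ must have a $G$-edge outside $H$ and hence must be an endpoint of some extending ear, and conclude since each ear has only two endpoints. (Minor typo: the chain should start from a cycle $C_0 = H_0 \subseteq \cdots \subseteq H_r = H$, not $H = H_0$; and your momentary worry that internal vertices of an extending ear could be ``earlier'' vertices is unfounded, as you yourself note later when you correctly invoke that internal vertices of an open ear are new.)
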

\begin{proof}
    Any ear decomposition of $H$ can be completed to one of $G$ by adding $(\cy(G)-\cy(H))$-many ears. Since $G$ does not have vertices of degree $2$, every vertex of degree $2$ in $H$ is adjacent to at least one new ear. Moreover, each new ear is adjacent to at most two such vertices. The number of new ears must then be at least equal to half the number of vertices of degree $2$ in $H$.
\end{proof}
Before handling the remaining case towards the proof of \Cref{thm: gamma2 >0}, we need an additional definition.
\begin{definition}\label{def: cones}
Let $G=([n],E)$ be a graph and let $i$ and $j$ be two new vertices. 
\begin{enumerate}
    \item[(i)] The \emph{double cone} of $G$ with respect to $i$ and $j$ is the graph with vertex set $V\cup\{i,j\}$ and with edges 
    \[
    E\cup(\{i,j\}\times V)\cup\{ij\}.\]
   \item[(ii)] If $G$ is bipartite with bipartition given by $V=V_1\cup V_2$, the the \emph{bipartite cone} of $G$ with respect to $i$ and $j$ is the bipartite graph with vertex set $V\cup\{i,j\}$ and with edges 
    \[
    E\cup(\{i\}\times V_1)\cup (\{j\}\times V_2)\cup\{ij\}.\]
\end{enumerate}
    
\end{definition}

\begin{figure}
    \centering
    \includegraphics[scale=0.7]{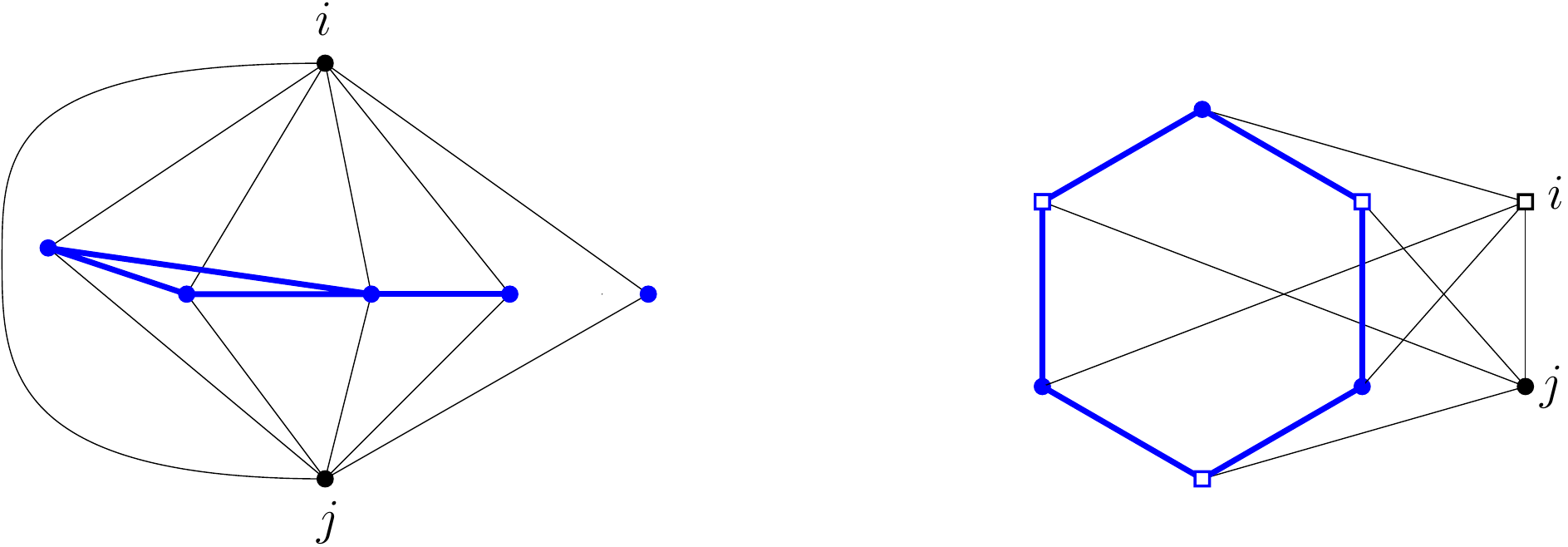}
    \caption{The double cone of a graph (left) and the bipartite cone of a $6$-cycle (right).}
    \label{fig: cones}
\end{figure}

\begin{proposition}\label{prop: super long}
    Let $G=([n],E)$ be a $2$-connected graph. Assume that every edge of $G$ is contained in some $3$- or $4$-cycle, and that $\min_{i\in [n]}\deg_G(i)\geq 3$. Then, for every $e\in E$,
    \[
    \gamma_2(\Pc_{G})\geq \gamma_2(\Pc_{G\setminus e}).
    \]
    Moreover, equality holds if and only if $G=G_1\oplus_2\cdots \oplus_2 G_m$, where $m \geq 1$, all the $2$-clique sums are taken along $e$, $G_1$ is the double cone w.r.t. $i$ and $j$ over a connected graph $G_1'$ with $|E(G_1')| \geq 1$, and for every $2\leq \ell\leq m$ either:
    \begin{itemize}
        \item[-] $G_\ell$ is the double cone w.r.t. $i$ and $j$ over any connected graph $G_\ell'$ with $|E(G_{\ell}')| \geq 1$, or
        \item[-] $G_\ell$ is the bipartite cone w.r.t. $i$ and $j$ over an even cycle.
    \end{itemize}
\end{proposition}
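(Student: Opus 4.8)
The plan is as follows. Since $\min_{i}\deg_G(i)\ge 3$ we have $n\ge 4$, and since $G$ is $2$-connected the edge $e$ is not a bridge, so $\cy(G\setminus e)=\cy(G)-1$ and \eqref{eq: gamma2difference} applies; the asserted inequality is thus equivalent to $n_1(G)-n_1(G\setminus e)\le 4\cy(G)+2$, and equality in the Proposition to equality here. We may work with any convenient total order on $E$ — say one making $e$ the $<$-maximal edge — because $n_1(\cdot)$ is independent of the order by \Cref{lem: gamma1-gamma2 for G} (so $n_1(G)$ and $n_1(G\setminus e)$ could even be computed with respect to different orders).

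\emph{Localization at $e$.} The quantity $n_1(G)-n_1(G\setminus e)$ counts precisely the bad pairs of $G$ that are not bad pairs of $G\setminus e$. Every such pair is witnessed only by $3$- and $4$-cycles of $G$ through $e$; in particular both of its underlying edges lie in the subgraph $H:=H_e$ defined as the union of all $3$- and $4$-cycles of $G$ through $e$, and the pair is a bad pair of $H$ but not of $H\setminus e$ (since $H\setminus e\subseteq G\setminus e$). Hence $n_1(G)-n_1(G\setminus e)\le n_1(H)-n_1(H\setminus e)$. Being a union of cycles through a common edge, $H$ is $2$-connected, so $\cy(H)\le\cy(G)$ with equality if and only if $H=G$ (a proper $2$-connected subgraph of a $2$-connected graph has strictly smaller cyclomatic number, as in the proof of \Cref{prop: deg 2 vertex}). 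Therefore it suffices to prove
\begin{equation}\label{eq:sketchkey}
    n_1(H)-n_1(H\setminus e)\ \le\ 4\cy(H)+2
\end{equation}
for every union $H$ of $3$- and $4$-cycles through a common edge $e$, and to determine when \eqref{eq:sketchkey} is tight.

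\emph{Counting near $e$.} Write $e=ij$. The bad pairs on the left of \eqref{eq:sketchkey} split into those containing an orientation of $e$ and those that do not (the latter being pairs which lose all of their witnesses when $e$ is deleted). By the central symmetry $x\mapsto-x$ of $\Pc_H$, which reverses all orientations, the first kind number $2B$, where $B$ counts bad pairs $\{i\to j,\,a\to b\}$; with $e$ $<$-maximal, each such pair comes from a triangle $\{i,j,x\}$ through $e$ (giving $\{i\to j,j\to x\}$ and $\{i\to j,x\to i\}$) or from a $4$-cycle $i-j-x-y-i$ through $e$ (giving $\{i\to j,j\to x\}$, $\{i\to j,x\to y\}$, $\{i\to j,y\to i\}$, up to coincidences and the $<$-minimality condition). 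The second kind consists of pairs supported on two non-$e$ edges of a short cycle through $e$ that no longer lie on a common short oriented cycle in $H\setminus e$. I would establish \eqref{eq:sketchkey} by fixing an open ear decomposition of $H$ whose first ear is a cycle through $e$ and adding the remaining $\cy(H)-1$ ears one at a time, showing that each added ear raises the left-hand side by at most $4$ while the initial cycle accounts for the ``$+2$''. Passing back from $H$ to $G$, \Cref{lem: general bound} enters: any vertex of degree $2$ in $H$ but of degree $\ge 3$ in $G$ must meet a further ear of $G$, so $H\ne G$ already forces $\cy(G)\ge\cy(H)+1$ and hence strict inequality.

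\emph{Equality, and the main obstacle.} Consequently $\gamma_2(\Pc_G)=\gamma_2(\Pc_{G\setminus e})$ forces $H=G$ and forces the ear-by-ear count to be tight at every step. Determining which gluings of $3$- and $4$-cycles along $e$ do not create additional short cycles (which would produce extra bad pairs and spoil tightness), one is left exactly with the double cones over a connected graph with at least one edge, the bipartite cones over an even cycle, and $2$-clique sums of such pieces along $e$ — the latter still being equality cases because no $3$- or $4$-cycle through $e$ meets two distinct pieces, and at least one piece being a double cone since a bipartite cone on its own only reaches $n_1-n_1(\cdot\setminus e)=4\cy$. That each graph on the list does attain equality is a direct $h^*$-vector computation, available also from \cite{HJM,OT-2021}. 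The hard part is the exact count underlying \eqref{eq:sketchkey} — in particular the second kind of bad pairs, whose number is sensitive to how the short cycles through $e$ overlap (shared non-$e$ edges, $K_4$'s through $e$, two $4$-cycles sharing an edge), precisely the configurations where several naive contributions collapse to one. A crude estimate suffices for the inequality, but the equality characterization needs the count on the nose, so every overlap pattern must be handled individually; this bookkeeping is the long, technical heart of the proof.
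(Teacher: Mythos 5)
Your reduction to the subgraph $H$ (the union of all $3$- and $4$-cycles through $e$) and your use of \eqref{eq: gamma2difference} are the same starting point as the paper. But the key inequality you reduce to,
\[
n_1(H)-n_1(H\setminus e)\ \le\ 4\cy(H)+2,
\]
is \emph{false} in general, and the plan you propose for it ("each added ear raises the left-hand side by at most $4$") is where it breaks. Take $H$ to be two $4$-cycles sharing only the edge $e=ij$: vertices $\{i,j,a,b,c,d\}$, edges $\{ij,ia,ab,bj,ic,cd,dj\}$. Then $\cy(H)=2$, so your bound is $10$, but the actual count is $12$. Each of the two length-$3$ ears contributes six new bad pairs (the $\binom{4}{2}-\binom{3}{2}=3$ oriented $2$-subsets avoiding the $<$-minimal edge, times two for the reversed orientation), so the "$\le 4$ per ear" heuristic already fails at the very first case you'd meet. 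This is not a corner case — it is precisely the configuration the proposition's equality class (bipartite cones) lives next to.

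The point you are missing is that the extra $+2$ per length-$3$ ear is not absorbed inside $H$; it must be paid for by $G$. A length-$3$ ear creates two vertices of degree $2$ in $H$, and since $\min\deg_G\ge 3$ those vertices must meet further ears of $G$ outside $H$, so $\cy(G)$ exceeds $\cy(H)$ by enough to compensate. You do invoke \Cref{lem: general bound}, but only to squeeze out strictness once the (false) bound for $H$ is in hand; in fact it is load-bearing for the inequality itself. The paper's proof makes this precise: it fixes an algorithmic ear decomposition of $H$ (Steps 0--5), proves the exact identity $n_1(G)-n_1(G\setminus e)=4\cy(H)+2(r_3-r_5)+2\cdot\mathbbm{1}_{r_1\ge 1}$, then bounds $r_3-r_5$ by half the number of degree-$2$ vertices of $H$ and feeds this into \Cref{lem: general bound} to get $\cy(G)\ge \cy(H)+(r_3-r_5)$. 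Your sketch has no mechanism that couples the overcount inside $H$ to the cyclomatic surplus of $G$, and without that coupling the inequality does not close. The equality analysis inherits the same gap, since it rests on a purported per-ear budget that is wrong for length-$3$ ears.
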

\begin{proof}
	Let $e=ij$ be any edge. We define $H=(W,F)$ to be the subgraph induced by all edges of $G$ which lie in a $3$- or $4$-cycle together with $e$. We now give an iterative procedure to construct $H$ in a sequence of steps, yielding a partition of $F$. We use $H'=(W',F')$ to denote the current graph in the procedure. Set $E'=E$.
	\begin{itemize}
		\item[\textbf{Step 0}:] Set $H'=(\{i,j\},\{ij\})$, namely $H'$ is the graph consisting of the edge $e$ alone.
		\item[\textbf{Step 1}:] For every pair of edges $f,g\in E'$ such that $\{e,f,g\}$ is a $3$-cycle, add $f,g$ to $F'$ and delete them from $E'$. This step adds to $H'$ a number $r_1$ of ears of length $2$.
		\item[\textbf{Step 2}:] Add to $F'$ every edge $k\ell\in E'$ such that $k$ and $\ell$ are vertices of $H'$, and delete these edges from $E'$. This step adds to $H'$ a number $r_2$ of ears of length $1$;
		\item[\textbf{Step 3}:] If there is a $4$-cycle $C$ in $G$ with $E(C)\cap F'=\{e\}$, add the three edges in $E(C)\setminus\{e\}$ to $F'$, and delete them from $E'$. Update $H'$ and repeat this procedure as  often as possible. In this step, $r_3$ many ears of length $3$ are added to $H'$.
		\item[\textbf{Step 4}:] If there is a $4$-cycle $C$ in $G$ with $E(C)\cap F'=\{e,g\}$ for some edge $g$, add $E(C)\setminus\{e,g\}$ to $F'$ and delete these edges from $E'$. Update $H'$ and iterate this  procedure as long as possible. This step adds to $H$' a number $r_4$ of ears of length $2$;	
		\item[\textbf{Step 5}:] Add to $F'$ the edges $f\in E'$ such that $e$ and $f$ are contained in a $4$-cycle $C$ with $E(C)\setminus F'=\{f\}$. This step adds to $H'$ a number $r_5$ of ears of length $1$.
	\end{itemize} 
	\begin{figure}[h]
		\centering
		\includegraphics[scale = 0.75]{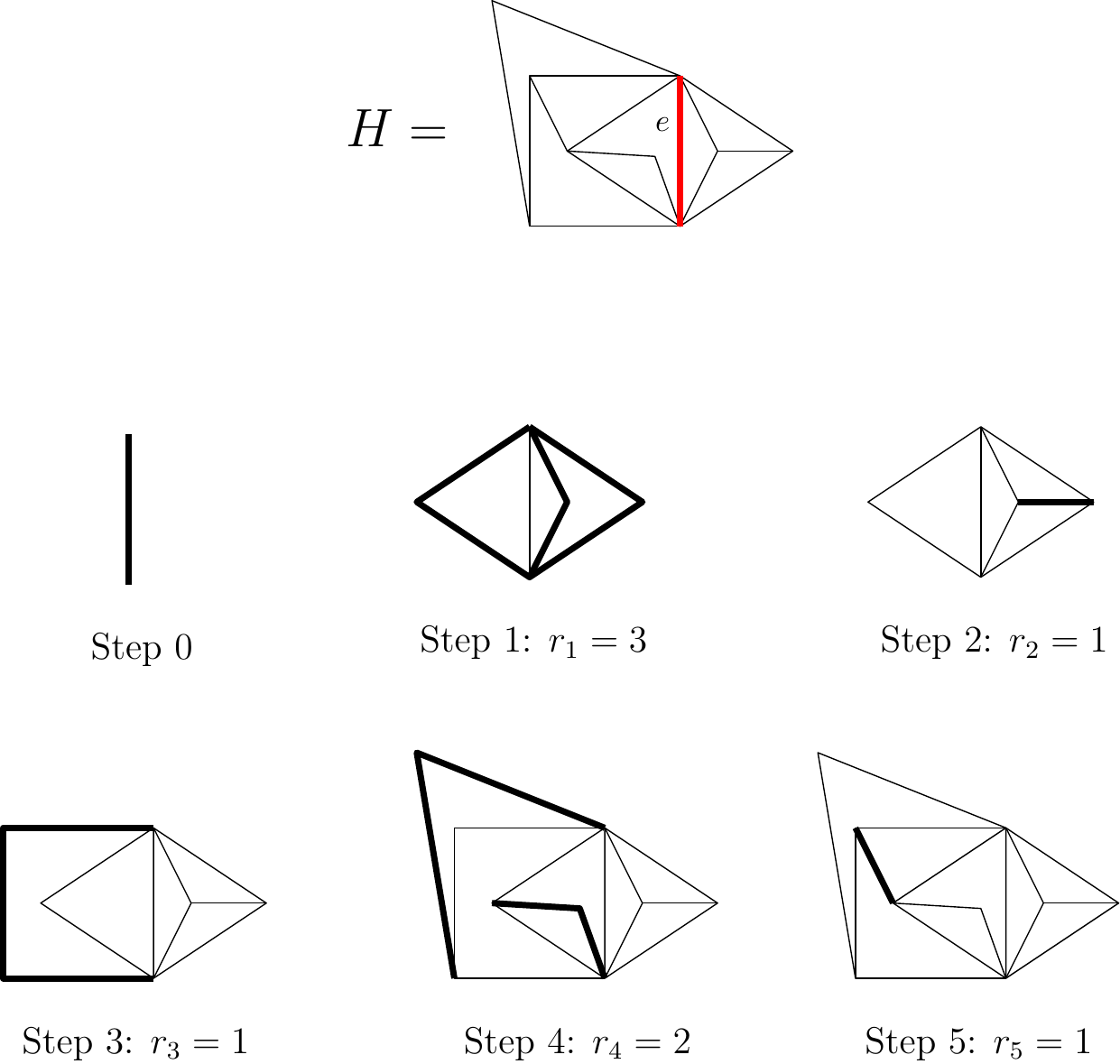}
		\caption{The construction of the graph $H$ as in the proof of \Cref{prop: super long}.}
		\label{fig: algorithm for H}
	\end{figure}
	(See \Cref{fig: algorithm for H} for an example of how this algorithm works.) It is obvious that this procedure indeed yields an open ear decomposition of $H$ (the closed ear being the first cycle that is constructed either in Step 1 or 3). Hence,
	\begin{equation}\label{eq: cyclomaticNumberH}
	\cy(H)=\sum_{i=1}^5 r_i.
	\end{equation}
	Observe that we make multiple choices in Steps $3$ and $4$ and hence neither the decomposition nor the numbers $r_3$, $r_4$ and $r_5$ are uniquely determined. 
	Fix now any linear order on $F$ such that:
	\begin{itemize}
	    \item[-] $g$ is bigger than $f$ if $g$ has been added to $F$ before $f$;
	    \item[-] if $f,g,h$ are edges added in an iteration of Step $3$, then the smallest of the three is the one not incident to $e$;
	    \item[-] if $f,g$ are edges added in an iteration of Step $4$, then the smallest of the two is the one not incident to $e$.
	\end{itemize}
	 Consider an extension $<$ of this linear order to $E$ such that any edge of $E\setminus F$ is smaller than any edge of $F$. In particular, $e$ is the $<$-maximal edge.
	 
	We now describe the bad pairs of $G$ which are not bad pairs of $G\setminus e$. We will use the fact that any pair of disjoint oriented edges determines a unique oriented $4$-cycle. In the following, let $\mathbbm{1}_{C}$ be the indicator function which equals 1 if condition $C$ holds and 0 otherwise.  
	\begin{itemize}
	    \item[-] For every edge $f$ lying in a $3$-cycle with $e$, there are  $2$ bad pairs supported on $\{e,f\}$. As there are $2r_1$ such edges $f$, this gives rise to $4r_1$ bad pairs. 
	    \item[-] There are $2$ bad pairs supported on the set $\{f,g\}$, where $\{e,f,g\}$ is the last $3$-cycle added in Step 1. As these pairs only occur if $r_1\geq 1$, their number is $2\cdot\mathbbm{1}_{r_1\geq 1}$.
	    \item[-] Each edge $h$ added in Step 2 lies in a subgraph of $H$ isomorphic to $K_4$ that also contains $e$. In particular, there are two $4$-cycles containing $e$ and $h$, and in both cycles $h$ is the minimal element. For each cycle, only the two edges different from $e$ and $h$ give rise to $2$ new bad pairs. Therefore, for each edge $h$ added in Step 2 we get $4$ new bad pairs, and hence we obtain $4r_2$ many.
	    \item[-] If $f,g$ and $h$ have been added in the same iteration of Step 3 with $\min_{<}\{e,f,g,h\}=h$, then there are $2$ bad pairs supported on each of $\{e,f\}$, $\{e,g\}$ and $\{f,g\}$. This yields $6r_3$ such bad pairs.
	    \item[-] If $\{e,f,g,h\}$ lie in a $4$-cycle such that $g$ and $h$ have been added in the same iteration of Step 4 and $h<g$, then there are $2$ bad pairs supported on each of $\{e,g\}$ and $\{f,g\}$. Note that $f$ is incident to $e$ and has been added either in Step $1$ or in Step $3$. This implies that the two bad pairs supported on $\{e,f\}$ have already been counted in the previous discussion. Hence there are $4r_4$ new bad pairs.
	    \item[-] If  $\{e,f,g,h\}$ is a $4$-cycle such that $h$ has been added in Step $5$, then there are $2$ bad pairs supported on $\{f,g\}$. These are new, since  $f$ and $g$ did not lie in a $4$-cycle with $e$ before Step $5$. There are $2r_5$ such bad pairs.
	\end{itemize}
		 
	\begin{figure}[h]
	\centering
	\includegraphics[scale = 0.8]{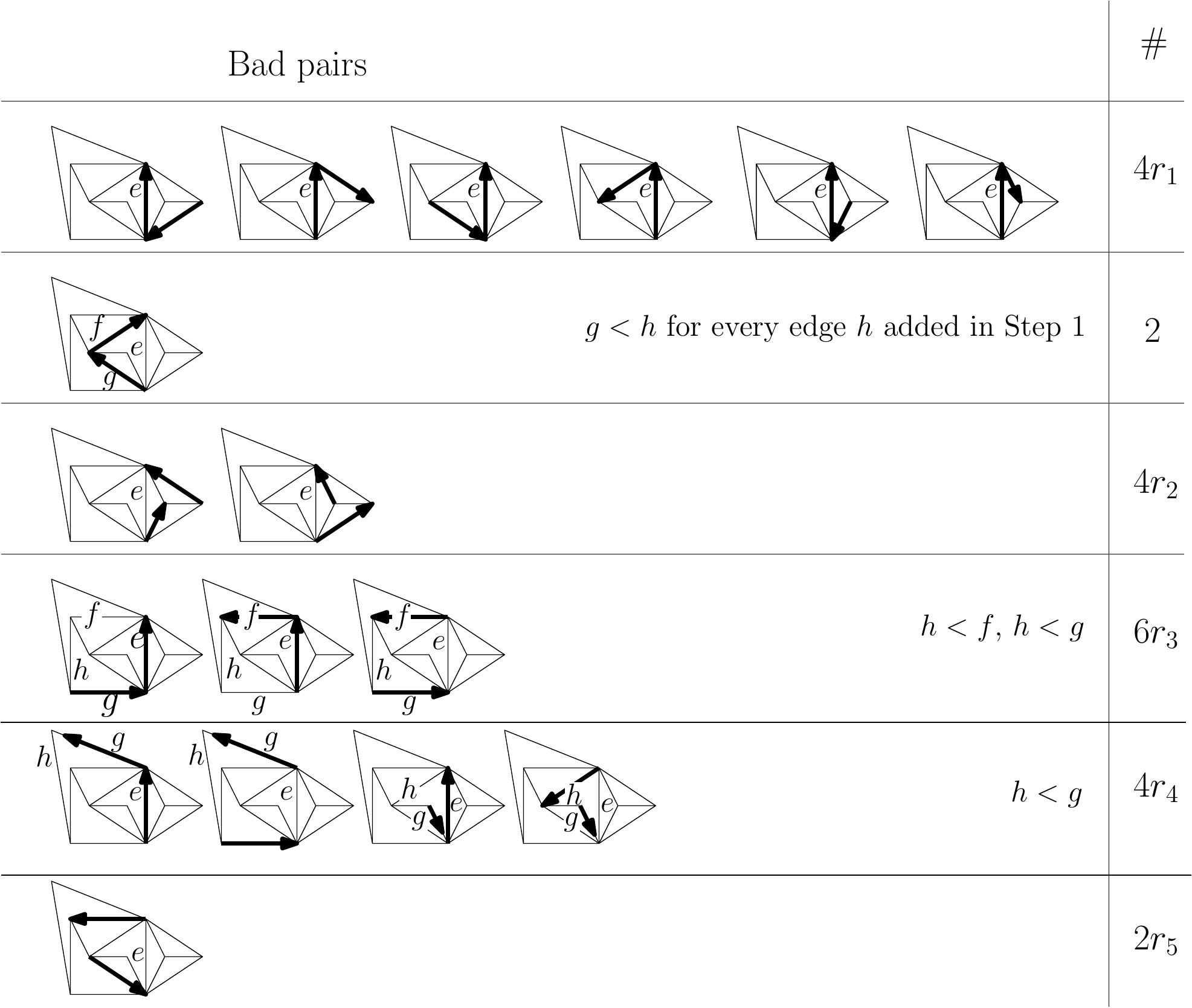} 
	\caption{The bad pairs of edges as in \Cref{prop: super long} for the graph $H$ in \Cref{fig: algorithm for H}. }
	\label{fig: lost and found}
\end{figure}
	\Cref{fig: lost and found} shows all bad pairs for the graph $H$ in \Cref{fig: algorithm for H}.
	For the total number of bad pairs in $G$ that are not bad pairs of $G\setminus e$ we hence get  
	\begin{align*}
	n_1(G)-n_1(G\setminus e)&=4r_1+2\cdot\mathbbm{1}_{r_1\geq 1}+4r_2 +6r_3+4r_4+2r_5\\
	    &=4\cy(H) +2(r_3-r_5) + 2\cdot\mathbbm{1}_{r_1\geq 1},
	\end{align*}
	where the second equality follows from \eqref{eq: cyclomaticNumberH}.
	Hence, \eqref{eq: gamma2difference} implies
	\begin{equation}\label{eq: Case 2.2b}
	    \gamma_2(\Pc_G)-\gamma_2(\Pc_{G\setminus e})=4(\cy(G)-\cy(H)) - 2(r_3-r_5)+2(1-\mathbbm{1}_{r_1\geq 1}).
	\end{equation}

As $H$ is $2$-connected, we have that $\cy(G)-\cy(H)\geq 0$, and hence the only possibly negative term in the last equation is $-2(r_3-r_5)$. In particular, if $r_3-r_5\leq 0$, it follows that $\gamma_2(\Pc_G)-\gamma_2(\Pc_{G\setminus e})\geq 0$. Now assume $r_3-r_5\geq 0$. We claim that in this case
	\begin{align}\label{eq: stronger ineq}
	    \cy(G)\geq \cy(H) + (r_3-r_5).
	\end{align}
	Note that using \eqref{eq: Case 2.2b} it then follows that 
	\begin{equation}\label{eq:gammaDifference2}
	    \gamma_2(\Pc_G)-\gamma_2(\Pc_{G\setminus e})\geq 2(r_3-r_5)+2(1-\mathbbm{1}_{r_1\geq 1}),
	\end{equation}
which is even stronger than $\gamma_2(\Pc_G)-\gamma_2(\Pc_{G\setminus e})\geq 0$.

	To show \eqref{eq: stronger ineq}, let $J=H\setminus\{i,j\}$ (i.e.,~$J$ is the graph obtained by removing the vertices $i$ and $j$ from $H$). The vertex set of $J$ can be partitioned as $V(J)=V_1\cup V_3\cup V_4$, where $V_{\ell}$ is the set of vertices that have been added to $H$ during Step $\ell$ of the described procedure. Since Steps $2$ and $5$ add ears of length $1$, no new vertices are introduced during these steps. Note that $|E(J)|=r_2+r_3+r_4+r_5$. For each connected component $J_{\ell}$ of $J$, we let $r_{k,\ell}$ be the number of edges of $J_{\ell}$ added to $H$ in Step $k$, for $k=1,\ldots,5$. We distinguish between two cases:
	\begin{itemize}
	    \item[\sf{Case 1:}] If $V(J_{\ell})\cap V_1\neq \emptyset$, then we consider an auxiliary graph $J_{\ell}'$ with vertex set $V(J_{\ell}')=(V(J_{\ell})\setminus V_1)\cup \{v\}$, where $v$ is a new vertex. Two vertices $a,b\in V(J_{\ell}')$ form an edge of $J_{\ell}'$ if either $ab\in E(J_{\ell})$ or $a=v$ and $wb\in E(J_{\ell})$, for some $w\in V(J_{\ell})\cap V_1$. Then $J_{\ell}'$ is connected. Hence,
	    \begin{align}\label{eq: Vi cap V1 nonempty}   r_{3,{\ell}}+r_{4,{\ell}}+r_{5,{\ell}}\geq|E(J_{\ell}')|\geq |V(J_{\ell}')|-1 = 2r_{3,{\ell}}+r_{4,{\ell}}+1-1,
	    \end{align}
	    where the first inequality follows from the fact that the edges in $J_{\ell}$ between two vertices in $V_1$ do not appear in $J_{\ell}'$. We obtain that $r_{3,{\ell}}-r_{5,{\ell}}\leq 0$.
	    			 
	    \item[\sf{Case 2:}] If $V(J_{\ell})\cap V_1=\emptyset$, then
	    \begin{align}\label{eq: Vi cap V1 empty}	        r_{3,{\ell}}+r_{4,{\ell}}+r_{5,{\ell}}=|E(J_{\ell})|\geq |V(J_{\ell})|-1 = 2r_{3,{\ell}}+r_{4,{\ell}}-1.
	    \end{align}
	    This implies $r_{3,{\ell}}-r_{5,{\ell}}\leq 1$, with equality attained if and only if $|E(J_{\ell})|=|V(J_{\ell})|-1$, i.e., if $J_{\ell}$ is a tree (with at least one edge, since  $r_{3,{\ell}}=r_{5,{\ell}}+1\geq 1$). In this case, $J_{\ell}$ has at least $2$ leaves. Since each leaf of $J_{\ell}$ corresponds to a vertex of degree $2$ in $H$, one has that \[|\{v\in H~:~ \deg_H(v)=2\}|\geq 2 \cdot |\{{\ell} ~ : ~J_{\ell} \text{ is a tree with }|V(J_{\ell})|\geq 2 \text{ and } V(J_{\ell})\cap V_1=\emptyset\}|.
	    \]
	\end{itemize}

	Combining the two cases above and using the identities $\sum_{\ell} r_{k,{\ell}}=r_k$ we obtain that
	\begin{equation} \label{eq:trees}
	    r_3-r_5\leq |\{{\ell} ~ : ~J_{\ell} \text{ is a tree with } |V(J_{\ell})|\geq 2, \,V(J_{\ell})\cap V_1=\emptyset\}|\leq 
	    \dfrac{|\{v\in H~:~ \deg_H(v)=2\}|}{2}.
	\end{equation}
	Using \Cref{lem: general bound} we conclude that
	\[
	    \cy(G)\geq \cy(H)+\dfrac{|\{v\in H~:~ \deg_H(v)=2\}|}{2}\geq \cy(H) + (r_3-r_5),
	\]
	which proves \eqref{eq: stronger ineq} and hence the inequality $\gamma_2(P_G)-\gamma_2(P_{G\setminus e})\geq 0$.
	
	We now study the case when $\gamma_2(P_G)-\gamma_2(P_{G\setminus e})= 0$. It follows from \eqref{eq: Case 2.2b}, \eqref{eq: stronger ineq} and \eqref{eq:gammaDifference2} that $\gamma_2(P_G)-\gamma_2(P_{G\setminus e})= 0$ if and only if $r_3=r_5$, $r_1\geq 1$ and $\cy(G)=\cy(H)$. The last equality implies that $G=H$. In particular, since we assumed that every vertex in $G$ has degree at least $3$, the same holds for $H$. It follows from \eqref{eq:trees} that there is no component $J_{\ell}$ with $V(J_{\ell})\cap V_1= \emptyset$ that is a tree with at least one edge. In particular, we have that $r_{3,{\ell}}-r_{5,{\ell}}\leq 0$ for any component $J_{\ell}$ and, as $r_3=r_5$, all of these inequalities are in fact equalities. The idea now is to analyze what the components $J_{\ell}$ can look like.

If $V(J_{\ell})\cap V_1\neq \emptyset$, then it follows from \eqref{eq: Vi cap V1 nonempty} that $J_{\ell}'$ has to be a tree and that we have $|E(J_{\ell}')|=r_{3,{\ell}}+r_{4,{\ell}}+r_{5,{\ell}}$. It follows from these two conditions that for every $w\in V(J_{\ell})\cap (V_3\cup V_4)$ there is at most one edge of the form $wz$ for some $z\in V(J_{\ell})\cap V_1$. This implies that every vertex of degree $1$ in $J_{\ell}'$ (other than $v$) corresponds to a vertex of degree $2$ in $H$. As there are none such, we conclude that $J_{\ell}'$ is just an isolated vertex, namely $v$. Hence, $r_{3,{\ell}}=r_{4,{\ell}}=r_{5,{\ell}}=0$ and $J_{\ell}$ is an arbitrary graph on $r_{1,{\ell}}$ vertices, where each vertex is connected to both $i$ and $j$ in $G$. This implies that the subgraph of $G$ induced on $V(J_{\ell})\cup\{i,j\}$ is the double cone over $J_{\ell}$ w.r.t. $i$ and $j$.

	If $V(J_{\ell})\cap V_1=\emptyset$, then by \eqref{eq: Vi cap V1 empty} we have that $r_{3,{\ell}}-r_{5,{\ell}}=0$ if and only if $|E(J_{\ell})|=|V(J_{\ell})|$. By the same argument as above, $J_{\ell}$ cannot have vertices of degree $1$ and must hence be a cycle. Since all vertices of $J_{\ell}$ have been added in an iteration of Step $3$ or $4$, each vertex of $J_{\ell}$ is connected to either $i$ or $j$ in $G$, but not to both. Consider an edge $vw\in E(J_{\ell})$ and assume that $vi\in E(G)$. As $vw$ lies in a $4$-cycle together with $e$ by assumption and there is a unique such, namely the one with edges $\{vw,vi,e, wj\}$, it follows that $wj\in E$. This shows that $J_{\ell}$ is a bipartite graph with vertex partition $\{v\in V(J_{\ell})~:~vi\in E\}\cup\{v\in V(J_{\ell})~:~vj\in E\}$. Hence, $J_{\ell}$ is an even cycle, and the subgraph of $G$ induced on $V(J_{\ell})\cup\{i,j\}$ is the bipartite cone of $J_{\ell}$ w.r.t. $i$ and $j$.

\end{proof}

\noindent We can finally provide the proof of \Cref{thm: gamma2 >0}.
\begin{proof}[Proof of \Cref{thm: gamma2 >0}]
Let $G=([n],E)$ be a graph. We show the claim by induction on $|E|$. If $|E|=1$, the claim is trivially true. Assume that $|E|>1$. Without loss of generality, we can assume that $G$ is connected since taking $1$-sums of its connected components does not change the symmetric edge polytope \cite[Remark 4.8]{DDM} and, in particular, its $\gamma$-vector. If $G$ is not $2$-connected, then let $G_1,\ldots,G_s$ be its $2$-connected components, where $s\geq 2$. As $|E(G_i)|<|E(G)|$, it follows from the induction hypothesis and \Cref{lem: reduce to 2-connected} that $\gamma_2(\Pc_G)\geq 0$. Assume that $G$ is $2$-connected. Applying Propositions \ref{prop: no 3- 4-cycles}, \ref{prop: deg 2 vertex} and \ref{prop: super long}, it follows that there exists $e\in E$ with $\gamma_2(\Pc_G)\geq \gamma_2(\Pc_{G\setminus e})$. Since, by the induction hypothesis, the latter expression is nonnegative, this finishes the proof.
\end{proof}

\begin{remark}\label{rem: we need to take deg 2 edge}
	We remark that in the proof of \Cref{thm: gamma2 >0} we do not claim that $\gamma_2(\Pc_G)\geq \gamma_2(\Pc_{G\setminus e})$ for \emph{every} edge $e\in E$. This statement is indeed false. For a small counterexample, let $G$ be the $2$-connected graph on $5$ vertices with $E=\{12,23,34,45,15,35\}$. Then $\gamma_2(\Pc_G)=4$ and $\gamma_2(\Pc_{G\setminus 35})=6$, so $\gamma_2$ increases when removing the edge $35$. However, $\deg_G(3)=\deg_G(5)=3$ and $\deg_G(1)=\deg_G(2)=\deg_G(4)=2$ in $G$, so this is the setting from \Cref{prop: deg 2 vertex}. In particular, the proof states that we should choose $e$ to be adjacent to one of the vertices of degree $2$, a condition that the edge $35$ does not satisfy.
\end{remark}

In the remaining part of this section, we focus on the problem of when $\gamma_2(\Pc_G)=0$. 
\label{def: Gn}
\begin{definition}
	Let $n\geq 3$. Let $G_n$  be the graph on $n$ vertices obtained from the complete bipartite graph $K_{2,n-2}$, considered with bipartition $[n]=[2]\cup\{3,\ldots,n\}$, by adding the edge $12$.
\end{definition}

We note that $G_n$ has $2n-3$ edges.


\begin{theorem}\label{cor: gamma_2=0}
	Let $G=([n],E)$ be a $2$-connected graph. Then $\gamma_2(\Pc_G)=0$ if and only if either $n<5$, or $n\geq 5$ and $G\cong G_n$ or $G\cong K_{2,n-2}$.
\end{theorem}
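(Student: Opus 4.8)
The plan is to prove both directions using the structural information already assembled. For the ``if'' direction, when $n < 5$ the dimension of $\Pc_G$ is at most $3$, so $\gamma_2(\Pc_G)$ is either not defined or forced to be zero by degree reasons; this is immediate. For $n \geq 5$ one invokes the explicit $\gamma$-vector computations: the $\gamma$-vector of $\mathcal{P}_{K_{2,n-2}}$ was computed in \cite{HJM} and that of $G_n$ follows from the results in \cite{OT-2021} on bipartite-cone-type graphs (note $G_n$ is the double cone over a graph with a single edge, and $K_{2,n-2}$ is the bipartite cone over an even path/cycle construction), and in both cases $\gamma_2 = 0$. Alternatively, one can verify $\gamma_2 = 0$ directly via \Cref{lem: gamma1-gamma2 for G} by computing $\cy(G)$ and $n_1(G)$: for $G_n$ one has $\cy(G_n) = n-2$ and for $K_{2,n-2}$ one has $\cy = n-3$, and a careful count of bad pairs gives $n_1(G) = 2\cy(\cy+2)$ in each case.

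For the ``only if'' direction I would argue by induction on $|E|$, running the same machinery as in the proof of \Cref{thm: gamma2 >0} but now tracking \emph{when} the inequalities are equalities. Suppose $G$ is $2$-connected with $n \geq 5$ and $\gamma_2(\Pc_G) = 0$. If $G$ has an edge $e$ not contained in any $3$- or $4$-cycle, \Cref{prop: no 3- 4-cycles} gives $\gamma_2(\Pc_G) = \gamma_2(\Pc_{G\setminus e}) + 4\cy(G) + 2 > 0$, a contradiction; so every edge lies in a $3$- or $4$-cycle. If $G$ has a vertex of degree $2$ incident to an edge $e$, then \Cref{prop: deg 2 vertex} forces $G = H$, i.e.\ \emph{every} edge of $G$ lies in a $3$- or $4$-cycle together with that particular $e$. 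If $G$ has minimum degree $\geq 3$, then by \Cref{prop: super long}, for $\gamma_2$ to vanish we need $\gamma_2(\Pc_{G\setminus e}) = 0$ \emph{and} the equality case of that proposition to hold, meaning $G = G_1 \oplus_2 \cdots \oplus_2 G_m$ along a common edge $e$, where each $G_\ell$ is either a double cone over a connected graph with at least one edge or a bipartite cone over an even cycle.

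The main work is then to show that the only $2$-connected graphs surviving these constraints (for $n \geq 5$) are $G_n$ and $K_{2,n-2}$. In the minimum-degree-$\geq 3$ case: the double-cone-over-a-single-edge summand gives a $K_4$-like piece, and stacking multiple double-cone summands over the same pair $\{i,j\}$ along $e = ij$ simply produces the double cone over the disjoint union of the $G_\ell'$, i.e.\ a graph of the form $G_n$ (after relabelling, $\{i,j\} = \{1,2\}$ and the $G_\ell'$ vertices become $\{3,\dots,n\}$); if we also need minimum degree $\geq 3$ this is exactly $G_n$ when the $G_\ell'$ together contain at least one edge so that... — wait, here one must be careful: $G_n$ itself has the degree-$2$ vertices $3,\dots,n$, so actually $G_n$ falls under the \emph{degree-$2$} case, not the min-degree-$\geq 3$ case. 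So in the min-degree-$\geq 3$ case one shows the bipartite-cone summands force a bipartite structure and the resulting graph must be $K_{2,n-2}$ (a single bipartite cone over an even cycle, after checking that iterating or mixing with double cones raises some vertex degree or changes $\gamma_2$), while $G_n$ emerges from the degree-$2$ analysis by checking directly that a $2$-connected graph in which every edge lies in a $3$- or $4$-cycle with a fixed edge $e = ij$, and which equals the induced subgraph $H$, must be a double cone over a graph on $\{3,\dots,n\}$, and $2$-connectivity plus $\gamma_2 = 0$ (re-examining \eqref{eq: Case 2.1b}, which gave $\gamma_2(\Pc_G) = \gamma_2(\Pc_{G\setminus e})$ exactly when $G = H$) forces this double cone to be $G_n$ or degenerate to $K_{2,n-2}$. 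The hard part will be the bookkeeping in the equality analysis of \Cref{prop: super long}: one must carefully use that $G = G_1 \oplus_2 \cdots \oplus_2 G_m$, that $\gamma_2$ is additive over $2$-clique sums in the sense of \Cref{lem: reduce to 2-connected} applied to the pieces glued along $e$, and iterate the vanishing condition down each summand, ruling out mixtures that would yield a graph on $\geq 5$ vertices distinct from $G_n$ and $K_{2,n-2}$ — in particular checking that a bipartite cone over a $4$-cycle is $K_{2,2}$ plus structure, that a bipartite cone over a longer even cycle is a genuinely new graph only isomorphic to $K_{2,n-2}$ in the cycle case, and that no nontrivial $2$-clique sum of two such pieces along $e$ can have vanishing $\gamma_2$ unless it collapses to one of the two listed families.
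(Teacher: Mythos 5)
Your ``if'' direction matches the paper's: invoke the explicit $\gamma$-vector formulas from \cite{HJM} and \cite{OT-2021} (or verify directly via \Cref{lem: gamma1-gamma2 for G}). This part is fine.

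For the ``only if'' direction, your degree-$2$ case actually takes a genuinely different and arguably cleaner route than the paper. You observe that the equality case of \Cref{prop: deg 2 vertex} forces every edge of $G$ to lie in a $3$- or $4$-cycle with the fixed edge $e=ij$, and --- since $\deg_G(i)=2$ means every such cycle passes through both $ij$ and $ik$ --- this directly pins down the global structure: every vertex other than $j,k$ is adjacent exactly to $j$ and $k$, so $G$ is $K_{2,n-2}$ with possibly the edge $jk$ added, hence $K_{2,n-2}$ or $G_n$. No induction is needed for this case. The paper instead peels off the ear containing $e$, applies the induction hypothesis to the $2$-connected remainder $G'$, and then argues that the ear has length $2$ and that its endpoints sit in the right positions of $G' \in \{G_{n-1},K_{2,n-3},K_4\}$. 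Your direct argument buys you a shorter proof here, but the phrase ``must be a double cone over a graph on $\{3,\dots,n\}$'' is imprecise: the inner graph must be edgeless, and $K_{2,n-2}$ has no edge $ij$ and so is not a double cone at all. You should state the conclusion precisely.

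Where your proposal has a genuine gap is the minimum-degree-$\geq 3$ case. For $n \geq 5$ both $G_n$ and $K_{2,n-2}$ have vertices of degree $2$, so the min-degree-$\geq 3$ branch must terminate in a \emph{contradiction}, not in producing $K_{2,n-2}$. Your sketch says ``the resulting graph must be $K_{2,n-2}$'' and claims the bipartite cone over $C_4$ is ``$K_{2,2}$ plus structure'' and that a bipartite cone over a longer even cycle is ``isomorphic to $K_{2,n-2}$''; both claims are false (the bipartite cone over $C_4$ is $K_{3,3}$, which has all degrees $3$, and bipartite cones over longer even cycles likewise have minimum degree $3$, unlike $K_{2,n-2}$). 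Trying to classify the decomposition $G = G_1\oplus_2 \cdots \oplus_2 G_m$ from the equality case of \Cref{prop: super long} is much harder than necessary: you would still need $\gamma_2(\Pc_{G\setminus e})=0$, and $G\setminus e$ need not be $2$-connected for an arbitrary edge $e$, so the induction hypothesis does not apply directly. The paper sidesteps all of this: it chooses $e$ to be the single edge forming the last ear of an ear decomposition (which must be a single edge precisely because $\min\deg \geq 3$), so that $G\setminus e$ is $2$-connected with $n$ vertices; by induction $G\setminus e\cong K_{2,n-2}$ or $G_n$, both of which have at least three degree-$2$ vertices, so $G$ (which adds back one edge) must retain a degree-$2$ vertex, contradicting $\min\deg_G\geq 3$. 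You should replace your min-degree-$\geq 3$ analysis with this contradiction argument.
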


\begin{proof}
By \cite[Example 5.9]{OT-2021} we have that $\gamma_i(\Pc_{K_n})= \binom{n-1}{2i} \binom{2i}{i}$, and in \cite{HJM} it is proved that $\gamma_i(\Pc_{K_{m,n}}) = \binom{m-1}{2i} \binom{n-1}{2i} \binom{2i}{i}$. Moreover, $\gamma_i(\Pc_{G_n})= \gamma_i(\Pc_{K_{2,n-1}})$ for every $i\geq 0$ by \cite[Proposition 5.4]{OT-2021}. This proves the ``if" statement, which can be also verified directly using \Cref{lem: gamma1-gamma2 for G}, \eqref{eq: E K} and \eqref{eq: E G}. 

    We prove the claim by double induction on the pairs $(n,k)$, with $n=|V(G)|$ and $k=|E(G)|$. 
    The base case is given by any graph with $n<5$, as $\gamma_2(\Pc_G)=0$ for every graph $G$ with less than $5$ vertices.
    
    Let $n\geq 5$. First assume that $G$ has a vertex $i$ of degree $2$. Let $e=ij$ be any of the two edges incident with $i$. Being $2$-connected, $G$ is either a cycle (in which case \Cref{thm: gamma2 >0} and \Cref{prop: deg 2 vertex} imply that $n\in \{3,4\}$) or can be obtained from a $2$-connected graph $G'$ by adding an open ear $P$ of length $\ell \geq 2$ containing $e$. Assume the latter. We then have that
    

    \begin{equation*}
    0=\gamma_2(\Pc_G)\geq \gamma_2(\Pc_{G\setminus e})=\gamma_2(\Pc_{G'})\geq 0,
    \end{equation*}
    where the first inequality follows from \Cref{prop: deg 2 vertex}, the last equality from \eqref{eq: gamma2} in \Cref{lem: gamma1-gamma2 for G}, and the last inequality from \Cref{thm: gamma2 >0}.
    Hence $\gamma_2(\Pc_{G'})=0$ and, by induction, $G' \in \{G_{n'} : n' \geq 3\} \cup \{K_{2,n'-2} : n' \geq 4\} \cup \{K_4\}$. We claim that the length $\ell$ of the ear $P$ must be $2$. Indeed, consider any edge $f$ in $G'$ different from the one (if it exists) connecting the two endpoints of $P$. Then every cycle containing both $e$ and $f$ has length at least $\ell + 2$, and \Cref{prop: deg 2 vertex} forces $\ell = 2$. Let then $e=ij$, $ik$ be the edges in $P$. Note that $G'$ cannot be a $K_4$, as otherwise the edge of $K_4$ opposite to $jk$ would not be contained in any 3- or 4-cycle together with $e$. Then either $G' \cong G_{n-1}$ or $G' \cong K_{2,n-3}$, and thus $G\cong G_n$ (if $jk\in E$) or $G\cong K_{2,n-2}$ (otherwise).
    
    If $\min\text{deg}_G(v)\geq 3$, we choose $e$ to be the unique edge in the last ear of any ear decomposition of $G$. We then have that $G\setminus e$ is $2$-connected and has $n \geq 5$ vertices. As $\gamma_2(\Pc_{G\setminus e})=0$, we conclude by induction that $G\setminus e\cong K_{2,n-2}$ or $G\setminus e\cong G_n$. In both cases, $G\setminus e$ has at least $3$ vertices of degree $2$. Hence $G$ has at least one degree $2$ vertex, which contradicts the assumption $\min\text{deg}_G(v)\geq 3$.
\end{proof}
The characterization of the equality case $\gamma_2=0$ can be extended to all graphs as follows.
\begin{corollary}\label{cor:gamma2=0general}
    Let $G=([n],E)$ be a graph. Then $\gamma_2(\Pc_G)=0$ if and only if either
    \begin{enumerate}
        \item[(i)] $G$ is a forest, or,
        \item[(ii)] all but one of the $2$-connected components of $G$ are edges and the remaining component is isomorphic to one of $K_4$, $G_\ell$ for some $\ell\geq 3$, and $K_{2,\ell}$ for some $\ell\geq 2$.
    \end{enumerate} 
\end{corollary}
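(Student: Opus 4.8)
The plan is to reduce \Cref{cor:gamma2=0general} to \Cref{cor: gamma_2=0} by means of \Cref{lem: reduce to 2-connected}, which decomposes $\gamma_2(\Pc_G)$ over the $2$-connected components of $G$. First I would recall from that lemma that if $G_1,\dots,G_k$ are the $2$-connected components of $G$ (single edges included), then
\[
\gamma_2(\Pc_G) = \sum_{i=1}^k \gamma_2(\Pc_{G_i}) + 4\sum_{1\le i<j\le k}\cy(G_i)\cy(G_j).
\]
Since every summand on the right-hand side is nonnegative by \Cref{thm: gamma2 >0}, the total vanishes if and only if each $\gamma_2(\Pc_{G_i})=0$ \emph{and} the double sum $\sum_{i<j}\cy(G_i)\cy(G_j)=0$.

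Next I would analyze these two conditions. A single edge $K_2$ has $\gamma_1=\gamma_2=0$ and $\cy(K_2)=0$, so it contributes nothing. For a $2$-connected component $G_i$ that is not a single edge, \Cref{cor: gamma_2=0} tells us $\gamma_2(\Pc_{G_i})=0$ exactly when $G_i$ has fewer than $5$ vertices or $G_i\cong G_n$ or $G_i\cong K_{2,n-2}$; one checks that the small cases $n<5$ together with the $n\ge 5$ families give precisely the list $K_4$, $G_\ell$ for $\ell\ge 3$, and $K_{2,\ell}$ for $\ell\ge 2$ (indeed, $C_3=K_{2,1}$-type degenerate cases and $C_4=K_{2,2}$ and $G_3=K_3$, $G_4$ are the relevant small examples, and any $2$-connected graph on $<5$ vertices with a cycle is one of $C_3,C_4,K_4,G_4$, all of which already appear in or are absorbed by these families). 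In every such $G_i$ one has $\cy(G_i)\ge 1$. Hence the double sum $\sum_{i<j}\cy(G_i)\cy(G_j)$ vanishes if and only if at most one component has positive cyclomatic number, i.e.\ at most one of the $G_i$ is not a single edge. Combining: $\gamma_2(\Pc_G)=0$ iff either all components are single edges — equivalently $G$ is a forest, case (i) — or exactly one component $G_{i_0}$ is not an edge, and it must be one of the listed graphs, case (ii).

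The converse direction is immediate: if $G$ is a forest, then $\gamma_1=\gamma_2=0$ by \Cref{lem: gamma1-gamma2 for G} applied componentwise (or directly since $\cy(G)=0$); and if $G$ is as in (ii), then in the formula above all $\gamma_2$-summands vanish by the ``if'' part of \Cref{cor: gamma_2=0}, and the double sum vanishes because only one component has nonzero cyclomatic number. I do not expect a serious obstacle here — the only point requiring a little care is bookkeeping of the small cases $n<5$ in \Cref{cor: gamma_2=0}, making sure the resulting list of allowed non-trivial $2$-connected components is exactly $\{K_4\}\cup\{G_\ell:\ell\ge 3\}\cup\{K_{2,\ell}:\ell\ge 2\}$ with no omissions or spurious entries, and confirming that each of these genuinely has $\cy\ge 1$ so that the ``at most one non-edge component'' reduction is tight.

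\begin{proof}
Let $G_1,\dots,G_k$ be the $2$-connected components of $G$, where as usual single edges count as $2$-connected components. By \Cref{lem: reduce to 2-connected},
\[
\gamma_2(\Pc_G) = \sum_{i=1}^k \gamma_2(\Pc_{G_i}) + 4\sum_{1\le i<j\le k}\cy(G_i)\cy(G_j),
\]
and by \Cref{thm: gamma2 >0} every term on the right-hand side is nonnegative. Thus $\gamma_2(\Pc_G)=0$ if and only if $\gamma_2(\Pc_{G_i})=0$ for all $i$ and $\cy(G_i)\cy(G_j)=0$ for all $i<j$.

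If $G_i$ is a single edge, then $\gamma_2(\Pc_{G_i})=0$ and $\cy(G_i)=0$. If $G_i$ is not a single edge, then it is $2$-connected and contains a cycle, so $\cy(G_i)\ge 1$; moreover, by \Cref{cor: gamma_2=0} and a direct inspection of the graphs on fewer than five vertices, $\gamma_2(\Pc_{G_i})=0$ exactly when $G_i$ is isomorphic to one of $K_4$, $G_\ell$ for some $\ell\ge 3$, or $K_{2,\ell}$ for some $\ell\ge 2$ (in particular $C_3=G_3$-type and $C_4=K_{2,2}$ are covered).

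Therefore $\gamma_2(\Pc_G)=0$ forces that every $G_i$ which is not a single edge lies in the above list, and, since each such $G_i$ has $\cy(G_i)\ge 1$, the condition $\cy(G_i)\cy(G_j)=0$ for $i<j$ forces at most one of the $G_i$ to be a non-edge. If there is no such component, then $G$ is a forest, giving case (i). If there is exactly one, we obtain case (ii). Conversely, if $G$ is a forest then $\cy(G)=0$ and \Cref{lem: gamma1-gamma2 for G} gives $\gamma_2(\Pc_G)=0$; and if $G$ is as in case (ii), then in the displayed formula every $\gamma_2$-summand vanishes by the ``if'' part of \Cref{cor: gamma_2=0}, while the double sum vanishes because only one component has nonzero cyclomatic number. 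Hence $\gamma_2(\Pc_G)=0$.
\end{proof}
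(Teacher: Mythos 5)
Your proof is correct and follows essentially the same route as the paper: decompose $\gamma_2(\Pc_G)$ over $2$-connected components via \Cref{lem: reduce to 2-connected}, use nonnegativity from \Cref{thm: gamma2 >0} to force each summand to vanish, and then invoke \Cref{cor: gamma_2=0} plus the observation that a $2$-connected component that is not an edge has $\cy\geq 1$, so at most one such component can appear. One small slip in your motivational paragraph (but not in the actual proof): $C_3$ is not a ``$K_{2,1}$-type'' graph -- $K_{2,1}$ is a path, not a triangle -- but your final proof correctly identifies the small cases as $K_4$, $C_3=G_3$, $C_4=K_{2,2}$, and $G_4$, matching the paper's enumeration.
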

\begin{proof}
    Assume $\gamma_2(\Pc_G)=0$. If $G$ has at least two $2$-connected components that are not edges, then the product of their cyclomatic numbers is positive, as the cyclomatic number of any $2$-connected graph is strictly positive. By \Cref{lem: reduce to 2-connected} and \Cref{thm: gamma2 >0}, this implies that $\gamma_2(\Pc_G)>0$. Hence, $G$ has at most one $2$-connected component that is not an edge. Let us denote this component by $H$, if it exists. Again using \Cref{lem: reduce to 2-connected} we observe that if $\gamma_2(\Pc_H)>0$, then $\gamma_2(\Pc_G)>0$. The claim now follows from \Cref{cor: gamma_2=0} by noting that the only $2$-connected graphs on less than $5$ vertices are $K_4$, $C_3=G_3$, $C_4=K_{2,2}$ and $G_4$.
\end{proof}
\begin{figure}[h]
	\centering
	\includegraphics[scale=0.75]{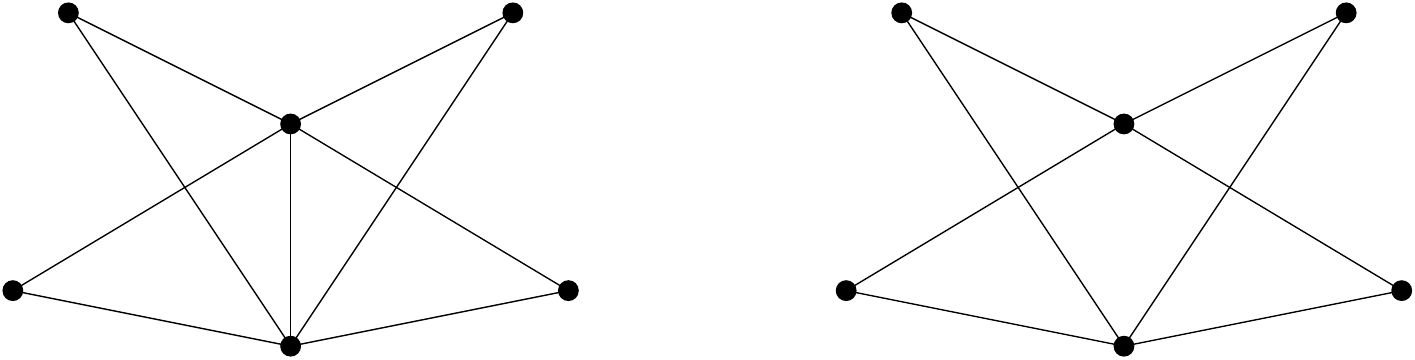}
	\caption{The graphs $G_6$ and $K_{2,4}$.}
	\label{fig:my_label}
\end{figure}

We close this section with a conjecture that extends \Cref{cor: gamma_2=0}. To state the conjecture, for $k\geq 2$, let $G_{n,k}$ be the graph that is  obtained from $K_{k,n-k}$ by adding all edges between the vertices on the side of the vertex partition with $k$ elements. In other words, $G_{n,k}$ can be thought of as the \emph{$k$-fold cone} over a set of $n-k$ isolated vertices. In particular, for $k=2$ we have $G_{n,2}=G_n$. The following conjecture naturally generalizes \Cref{cor: gamma_2=0} and has been verified computationally for small values of $k$ and $n$.

\begin{conjecture}
Let $k\in \mathbb{N}$ and let $G$ be a $k$-connected graph on $n$ vertices. Then $\gamma_k(\Pc_G)=0$ if and only if $n<2k+1$ or, $n\geq 2k+1$ and  $K_{k,n-k}\subseteq G\subseteq G_{n,k}$.
\end{conjecture}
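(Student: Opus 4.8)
The plan is to prove the two directions separately, the ``only if'' one being by far the harder. For the ``if'' direction one argues as follows. If $n<2k+1$ then $\dim\Pc_G=n-1\le 2k-1$, so $\lfloor\tfrac{n-1}{2}\rfloor<k$ and $\gamma_k(\Pc_G)=0$ by convention. If $n\ge 2k+1$ and $K_{k,n-k}\subseteq G\subseteq G_{n,k}$, one computes $\gamma_k(\Pc_G)=0$ directly at the endpoints --- using $\gamma_i(\Pc_{K_{m,\ell}})=\binom{m-1}{2i}\binom{\ell-1}{2i}\binom{2i}{i}$, which gives $\gamma_k(\Pc_{K_{k,n-k}})=0$ because $\binom{k-1}{2k}=0$, and the appropriate analogue of \cite[Proposition 5.4]{OT-2021}, which should identify $\gamma(\Pc_{G_{n,k}})$ with the $\gamma$-vector of a complete bipartite graph --- and then propagates $\gamma_k=0$ across the whole interval $[K_{k,n-k},G_{n,k}]$ by an edge-addition invariance: each of the $\binom k2$ edges $v_av_b$ one may add has both endpoints on the $k$-element side, hence lies only in oriented $3$- and $4$-cycles of the ambient graph, and for a term order in which these new edges are large the combinatorics of the triangulation $\Delta_<$ of \Cref{lem: nonfaces of Delta<} changes just enough to leave the top $\gamma$-entry unaffected.

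For the ``only if'' direction the natural route is to imitate the proofs of \Cref{thm: gamma2 >0} and \Cref{cor: gamma_2=0}. One first replaces the invariant $n_1(G)$ of \Cref{lem: gamma1-gamma2 for G} by the whole family: call an $\ell$-subset of oriented edges of $G$ a \emph{bad $\ell$-tuple} (for $1\le\ell\le k$) if, as in \Cref{lem: nonfaces of Delta<}, it contains an antipodal pair, or is contained in an oriented $(2\ell-1)$-cycle, or is contained in an oriented $2\ell$-cycle avoiding its $<$-minimal edge; via the $h$-vector of $\Delta_<$ this yields a (rather involved) expression of $\gamma_k(\Pc_G)$ in terms of the numbers of such configurations --- of dimension up to $k-1$ --- and the cyclomatic number $\cy(G)$. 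One then shows that for a $k$-connected $G$ on $n\ge 2k+1$ vertices there is an edge $e$ with $\gamma_k(\Pc_G)\ge\gamma_k(\Pc_{G\setminus e})$ and analyses when equality holds, inducting on $(n,|E|)$: $e$ should be chosen incident to a vertex of small degree when one exists --- so that, as in \Cref{prop: deg 2 vertex} and \Cref{cor: gamma_2=0}, one reduces to a smaller $k$-connected graph by contracting an ear --- and otherwise to be the unique edge of the last ``ear'' of a decomposition witnessing $k$-connectivity, so that $G\setminus e$ is again $k$-connected, as in \Cref{prop: super long} and \Cref{cor: gamma_2=0}. The equality analysis should again force $G$ to equal the subgraph $H$ spanned by the edges lying in a short oriented cycle through $e$, and then a structural description of $H$ in terms of the higher-dimensional analogues of the double cone and bipartite cone of \Cref{def: cones} --- roughly, $k$-fold cones over connected graphs and suitable multipartite cones over balanced cycles --- should pin down $G$ inside the family $K_{k,n-k}\subseteq G\subseteq G_{n,k}$.

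Two points are expected to be genuinely hard. First, the combinatorics explodes for $k\ge3$: the forbidden configurations now involve oriented cycles of every length up to $2k+1$, so deleting $e$ no longer affects $\Delta_<$ only \emph{locally} around $e$, and the tidy ear-by-ear bookkeeping of \Cref{prop: super long} --- together with its small correction terms such as $2(r_3-r_5)+2(1-\mathbbm{1}_{r_1\ge1})$ in \eqref{eq: Case 2.2b} --- must be replaced by a much more intricate inclusion--exclusion over nested systems of cycles. Second, and more seriously, the engine of the $k=2$ argument --- that $2$-connectivity is exactly captured by open ear decompositions, which together with \Cref{lem: general bound} controls how $\cy(G)$ grows when passing to a subgraph --- has no literal counterpart for $k$-connectivity when $k\ge3$; one would have to substitute for $\cy(G)$ an invariant measuring ``distance from $G_{n,k}$'' (for instance, once $K_{k,n-k}\subseteq G$, the number $|E(G_{n,k})|-|E(G)|$ of missing clique edges, or more robustly a weighted count of missing edges among $k$-cliques) and prove the corresponding monotonicity and rigidity statements from scratch. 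I expect this second obstacle to be the true bottleneck, and it may well be that a cleaner path is to first find a conceptual characterization of the vanishing $\gamma_k(\Pc_G)=0$ --- for example that the triangulated boundary $\Delta_<$ is forced to be an iterated join of a fixed complex with boundaries of cross-polytopes, in the spirit of the Lutz--Nevo picture of \Cref{sec:lutznevo} --- from which the graph-theoretic description would follow; note in addition that, since $\gamma_k\ge0$ is itself still open for $k\ge3$, any complete proof of the conjecture must in passing establish that inequality for the graphs involved.
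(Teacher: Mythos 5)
The statement you are addressing is an open \emph{conjecture} of the paper, not a theorem: the authors explicitly say it has only been verified computationally for small $k$ and $n$, and for $k\ge 3$ the paper does not even establish the weaker inequality $\gamma_k(\Pc_G)\ge 0$. There is therefore no proof in the paper to compare your attempt against, and any complete argument here would be new.

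Your proposal is honest about this, and you correctly locate the obstructions to extending the paper's $k=2$ machinery (\Cref{prop: super long}, \Cref{lem: general bound}): the non-faces of $\Delta_<$ now come from oriented cycles of every length up to $2k+1$, so deleting an edge is no longer a local operation, and the cyclomatic-number-via-ear-decomposition control that drives \Cref{prop: super long} has no analogue for $k$-connectivity. Those diagnoses are right, and so is your observation that any full proof must in passing settle $\gamma_k\ge 0$. However, even the direction you label as the ``if'' direction is not yet a proof. The endpoints are fine---$\gamma_k(\Pc_{K_{k,n-k}})=\binom{k-1}{2k}\binom{n-k-1}{2k}\binom{2k}{k}=0$, and an analogue of \cite[Proposition 5.4]{OT-2021} is a reasonable expectation for $G_{n,k}$---but the claimed ``edge-addition invariance'' that is supposed to slide $\gamma_k=0$ from $K_{k,n-k}$ to $G_{n,k}$ one clique edge at a time is asserted without justification. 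Adding an edge between two vertices on the $k$-clique side creates new oriented cycles of every length from $3$ to $2k+1$, hence new non-faces of $\Delta_<$ in every dimension up to $k-1$, and showing that their combined effect on $\gamma_k$ cancels is exactly the kind of inclusion--exclusion you acknowledge you cannot yet control in the ``only if'' direction; it is not a lighter version of the same problem. For $k=2$ the paper avoids this entirely by quoting known $\gamma$-vector formulas for both endpoints $K_{2,n-2}$ and $G_n$ (the interval has only two elements); for $k\ge 3$ no such formulas are at hand and the interval has $2^{\binom{k}{2}}$ graphs. In short, your sketch is a sensible research plan and an accurate map of where the difficulties lie, but no part of it, including the direction you treat as easy, currently constitutes a proof.
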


\section{On a conjecture of Lutz and Nevo} \label{sec:lutznevo}
In this section, our focus lies on a conjecture by Lutz and Nevo \cite[Conjecture 6.1]{LN} which characterizes flag PL-spheres $\Delta$ with $\gamma_2(\Delta)=0$. Our goal is to show that symmetric edge polytopes with $\gamma_2=0$ admit a triangulation of their boundary with the properties predicted by \cite[Conjecture 6.1]{LN}.

\begin{conjecture}\cite[Conjecture 6.1]{LN}\label{conj:LN}
    Let $\Delta$ be a $(d-1)$-dimensional flag PL-sphere, with $d\geq 4$. Then the following are equivalent:
    \begin{itemize}
        \item[(i)] $\gamma_2(\Delta)=0$;
        \item[(ii)] There exists a sequence of edge contractions
        \[
            \Delta=\Delta_0\to \Delta_1=\Delta_0/F_1\to \cdots \to \Delta_{k-1}/F_k\cong \Diamond_{d-1},
        \]
        such that each $\Delta_i$ is a $(d-1)$-dimensional flag PL-sphere, and $\lk_{\Delta_{i-1}}(F_{i})\cong \Diamond_{d-3}$, for every $1\leq i \leq k$.
    \end{itemize}
\end{conjecture}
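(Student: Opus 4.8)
The plan is to prove the two implications of \Cref{conj:LN} separately; the step (ii)$\Rightarrow$(i) is routine, and (i)$\Rightarrow$(ii) carries all the difficulty. Both directions rest on the elementary observation that $\gamma_2$ of \emph{any} $(d-1)$-sphere is an affine function of its face numbers: combining $h_2(\Delta)=\binom{d}{2}-(d-1)f_0(\Delta)+f_1(\Delta)$ (which is just \eqref{eqn: h-vector}) with $\gamma_1(\Delta)=h_1(\Delta)-d=f_0(\Delta)-2d$ and the change of basis defining $\gamma(\Delta)$, one obtains
\[
  \gamma_2(\Delta)=f_1(\Delta)-(2d-3)\,f_0(\Delta)+2d(d-2).
\]
Thus $\gamma_2(\Delta)=0$ is equivalent to the edge count $f_1(\Delta)=(2d-3)f_0(\Delta)-2d(d-2)$, an identity satisfied by $\Diamond_{d-1}$ (which has $2d$ vertices, $\binom{2d}{2}-d$ edges, and $\gamma$-vector $(1,0,\dots,0)$) and which I will propagate along a chain of edge contractions.

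\emph{The direction} (ii)$\Rightarrow$(i). Let $F=\{v,w\}\in\Delta$ be an edge with $\lk_\Delta(F)\cong\Diamond_{d-3}$, and suppose $\Delta$ and $\Delta/F$ are flag $(d-1)$-spheres. Flagness of $\Delta$ forces the vertex set of $\lk_\Delta(F)$ to be exactly the set of common neighbours of $v$ and $w$ in the $1$-skeleton, so $v$ and $w$ have precisely $2(d-2)$ common neighbours (the number of vertices of the $(d-3)$-sphere $\Diamond_{d-3}$). Contracting $F$ removes one vertex, removes the edge $F$, and identifies the two edges $vx,wx$ for each common neighbour $x$; hence $f_0(\Delta/F)=f_0(\Delta)-1$ and $f_1(\Delta/F)=f_1(\Delta)-1-2(d-2)=f_1(\Delta)-(2d-3)$. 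Substituting into the displayed formula (valid for $\Delta/F$, again a $(d-1)$-sphere) gives $\gamma_2(\Delta/F)=\gamma_2(\Delta)$. Applying this along the chain $\Delta=\Delta_0\to\cdots\to\Delta_{k-1}/F_k\cong\Diamond_{d-1}$ yields $\gamma_2(\Delta)=\gamma_2(\Diamond_{d-1})=0$.

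\emph{The direction} (i)$\Rightarrow$(ii). Here I would induct on $f_0(\Delta)$. For the base case, $\gamma_2(\Delta)=0$ together with $\gamma_1(\Delta)\ge0$ forces $f_0(\Delta)\ge2d$, and when $f_0(\Delta)=2d$ we have $\gamma_1(\Delta)=0$; a flag $(d-1)$-sphere with $\gamma_1=0$ must be $\Diamond_{d-1}$ (its $1$-skeleton is the complete $d$-partite graph $K_{2,\ldots,2}$, which supports a unique flag sphere), so the empty chain works. For $f_0(\Delta)>2d$ (hence $\Delta\not\cong\Diamond_{d-1}$) the task is to exhibit an edge $F=\{v,w\}$ such that (a) $\lk_\Delta(F)\cong\Diamond_{d-3}$, (b) the link condition $\lk_\Delta(v)\cap\lk_\Delta(w)=\lk_\Delta(F)$ holds, so that $\Delta/F$ is PL-homeomorphic to $\Delta$ and hence a PL $(d-1)$-sphere, and (c) $\Delta/F$ is again flag. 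Granting such an $F$, the computation of the previous paragraph gives $\gamma_2(\Delta/F)=0$ while $f_0(\Delta/F)<f_0(\Delta)$, so the inductive hypothesis furnishes a chain for $\Delta/F$, which we prepend with $\Delta\to\Delta/F$. The intended route to producing $F$ is to first show that $\gamma_2(\Delta)=0$ forces a vertex $v$ whose link $\lk_\Delta(v)$ is itself a flag $(d-2)$-sphere with vanishing $\gamma_2$ (for large $d$ a lower-dimensional instance of the conjecture; for $d\in\{4,5\}$ the link is at most a flag $3$-sphere and a direct analysis is required), and then to pull back a contractible edge of $\lk_\Delta(v)$ to an edge $F$ of $\Delta$ with cross-polytopal link, checking (b) and (c) en route.

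The main obstacle is precisely this last extraction, and it is why \Cref{conj:LN} is established only in low dimensions. The $f$-vector identity and the standard lower-bound/rigidity inequalities for flag spheres pin down the global size of a complex with $\gamma_2=0$ but not its fine local structure: they do not, by themselves, produce an edge whose link is \emph{exactly} a cross-polytope sphere of the right dimension, and even when such an edge is available, the link condition (b) and flagness (c) are independent constraints that must both hold. A complete proof seems to require a genuine structure theorem for flag $(d-1)$-spheres with $\gamma_2=0$ — an analogue, in this broader combinatorial setting, of the explicit description of symmetric edge polytopes with $\gamma_2=0$ in \Cref{cor: gamma_2=0} — from which a contraction-reducing edge could be read off directly. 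In the absence of such a theorem the argument can be carried through only in restricted cases, such as the boundaries of symmetric edge polytopes treated here.
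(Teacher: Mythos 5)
What you are asked to prove here is a conjecture, and the paper does not prove it either: it only records the easy implication (ii)$\Rightarrow$(i), notes that (i)$\Rightarrow$(ii) is known for flag nestohedra, and states explicitly that the general case is open. Your proof of (ii)$\Rightarrow$(i) is correct and is essentially the paper's argument in expanded form: your identity $\gamma_2(\Delta)=f_1(\Delta)-(2d-3)f_0(\Delta)+2d(d-2)$ together with the edge count $f_1(\Delta/F)=f_1(\Delta)-1-f_0(\lk_\Delta(F))$ (valid by flagness) is exactly the contraction formula \eqref{eq:gammaContraction}, $\gamma_2(\Delta)=\gamma_2(\Delta/F)+\gamma_1(\lk_\Delta(F))$, specialized to the case $\lk_\Delta(F)\cong\Diamond_{d-3}$ where $\gamma_1$ of the link vanishes. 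Your assessment of (i)$\Rightarrow$(ii) is also accurate: the sketch you give (induct on $f_0$, use Gal--Meshulam to identify the base case $\gamma_1=0$ with $\Diamond_{d-1}$, find a contractible edge with cross-polytopal link satisfying the link condition and preserving flagness) is the standard strategy, and the missing ingredient is precisely the structure theorem you name. The paper's actual contribution on this front is \Cref{thm:lutznevoSEP}, which verifies the conjecture only for certain Gr\"obner-induced triangulations of boundaries of symmetric edge polytopes; there the required structural input is supplied by the classification in \Cref{cor: gamma_2=0} ($G\cong G_n$ or $K_{2,n-2}$), and the contraction sequence is written down explicitly in \Cref{lem: two steps}. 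So there is no gap in your proposal relative to what is actually known: one direction is proved, and the other is correctly identified as open.
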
 

    The implication ``(ii)$\Rightarrow $(i)'' follows easily from the well-known fact that the $\gamma_1=\gamma_2=0$ for the boundary of any cross-polytope combined with the following relation between the $\gamma$-vectors of $\Delta$ and of an edge contraction $\Delta/F$:
\begin{equation}\label{eq:gammaContraction}
    \gamma_2(\Delta) = \gamma_{2}(\Delta/F) + \gamma_{1}(\lk_{\Delta}(F)).
\end{equation}
The remaining implication has been proven for the subclass of (dual complexes) of flag nestohedra (see \cite[Section 6]{LN} and \cite{Volodin}) and has been tested computationally by Lutz and Nevo \cite[Section 6]{LN}, but is open in general. 

In the following, we show that the boundary complexes of the symmetric edge polytopes of the graphs $K_{2,n-2}$ and $G_n$ admit a triangulation satisfying (i) and (ii) above. We start by fixing labelings on $K_{2,n-2}$ and $G_n$. We label the vertices of $K_{2,n-2}$ and $G_n$ so that $E(K_{2,n-2})=\{1,2\}\times \{3,\dots,n\}$ and $E(G_n)=E(K_{2,n-2})\cup \{12\}$. Let further $<$ be a total order on the edges of both graphs such that $2n<2(n-1)<\dots<23$ are the smallest edges and let  $\Delta_{K_{2,n-2}}$ and $\Delta_{G_n}$ be the corresponding unimodular triangulations of $\partial\Pc_{K_{2,n-2}}$ and $\partial\Pc_{G_n}$, respectively, provided by \Cref{lem: nonfaces of Delta<}. The $6n^2-28n+34$ edges of $\Delta_{K_{2,n-2}}$ and the $6n^2-24n+24$ edges of $\Delta_{G_n}$ can then be listed as follows:

\begin{align}\label{eq: E K}
    E(\Delta_{K_{2,n-2}})=&\{\pm\{e_{i,a},e_{i,b}\}~:~ 1\leq i\leq 2, 3\leq a < b\leq n\} \cup\nonumber\\
    &\{ \pm\{e_{1,a},e_{b,2}\}~:~ 3\leq a \neq b\leq n\} \cup \nonumber\\
    &\{\pm\{e_{1,a},e_{2,a}\}~:~ 3\leq a\leq n\} \cup\\
    &\{\pm\{e_{2,b},e_{1,a}\}~:~ 3\leq a <b\leq  n\}\cup\nonumber\\
    &\{\pm\{e_{2,a},e_{b,2}\} ~:~ 3\leq a< b\leq n\}\cup\nonumber \\
    &\{\pm\{e_{1,n},e_{n,2}\}\}.\nonumber
\end{align}

\begin{align}\label{eq: E G}
    E(\Delta_{G_n})=&\{\pm\{e_{i,a},e_{i,b}\}~:~ 1\leq i\leq 2, 3\leq a < b\leq n\} \cup\nonumber\\
    &\{ \pm\{e_{1,a},e_{b,2}\}~:~ 3\leq a \neq b\leq n\} \cup \nonumber\\
    &\{\pm\{e_{1,a},e_{2,a}\}~:~ 3\leq a\leq n\} \cup\\
    &\{\pm\{e_{2,b},e_{1,a}\}~:~ 3\leq a <b\leq  n\}\cup\nonumber\\
    &\{\pm\{e_{2,a},e_{b,2}\} ~:~ 3\leq a< b\leq n\}\cup\nonumber \\
    &\{\pm\{e_{1,2},e_{1,a}\},\pm\{e_{1,2},e_{a,2}\}~:~ 3\leq a \leq n\}.\nonumber
\end{align}
In particular, we get that $E(\Delta_{K_{2,n-2}})\setminus E(\Delta_{G_n})=\{\pm\{e_{1,n},e_{n,2}\}\}$, and the only edges of $\Delta_{G_n}$ that are non-edges of $\Delta_{K_{2,n-2}}$ are those containing $e_{1,2}$ or $e_{2,1}$. 

\begin{lemma}\label{lem: two steps}
For every $n\geq 3$ we have:
\begin{itemize}
    \item[(i)] $\Delta_{K_{2,n-2}}\cong\langle e_{2,n},e_{n,2}\rangle*\Delta_{G_{n-1}}$,
    \item[(ii)] $(\Delta_{G_n}/\{e_{1,2},e_{1,n}\})/\{e_{2,1},e_{n,1}\}\cong\Delta_{K_{2,n-2}}$.
\end{itemize}
\end{lemma}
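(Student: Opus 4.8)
\textbf{Proof plan for \Cref{lem: two steps}.}
The plan is to prove both statements by a direct combinatorial analysis of the edge sets (and more generally, the non-faces) described by \Cref{lem: nonfaces of Delta<}, exploiting the explicit listings in \eqref{eq: E K} and \eqref{eq: E G}. For part (i), I would first argue that the pair $\{e_{2,n}, e_{n,2}\}$ is an antipodal pair, hence not a face, so $\langle e_{2,n}, e_{n,2}\rangle$ (as a complex consisting of two disjoint vertices) is exactly the restriction of $\Delta_{K_{2,n-2}}$ to $\{e_{2,n}, e_{n,2}\}$. The key observation is that in $K_{2,n-2}$ the vertex $n$ has degree $2$ (its only neighbors are $1$ and $2$), so the only cycles through $n$ use both edges $1n$ and $2n$; consequently the oriented edges $2\to n$ and $n\to 2$ lie only in cycles that also contain a vertex $e_{1,n}$ or $e_{n,1}$. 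Translating the non-face conditions (i)--(iii) of \Cref{lem: nonfaces of Delta<}: the vertex $e_{2,n}$ (resp. $e_{n,2}$) is joinable to a face $F$ not containing $e_{1,n}$ or $e_{n,1}$ precisely when $F$ is a face of the subtriangulation on the remaining vertices, which one identifies with $\partial \Pc_{G_{n-1}}$ under the labeling where the roles of the two ``hubs'' $1,2$ in $G_{n-1}$ are played by $1,2$ and the former edge $12$ of $G_{n-1}$ corresponds to the contracted structure — here I would instead directly match vertex sets and non-face lists, noting that deleting $e_{1,n}, e_{n,1}, e_{2,n}, e_{n,2}$ from the vertex set of $\Pc_{K_{2,n-2}}$ leaves exactly the vertices of $\Pc_{G_{n-1}}$ once we reinterpret the edge $1n$ (now absent) and the edge $12$ appropriately; comparing \eqref{eq: E K} restricted away from these four vertices with \eqref{eq: E G} for $n-1$ should give the isomorphism on the level of $1$-skeleta, and since both complexes are the unique unimodular triangulations induced by $<$, agreement of $1$-skeleta together with flagness considerations (or directly, agreement of all minimal non-faces) upgrades this to an isomorphism of complexes.

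For part (ii) the plan is to track carefully what the two successive edge contractions do to the vertex set and to the non-face structure. Contracting $\{e_{1,2}, e_{1,n}\}$ identifies the vertices $e_{1,2}$ and $e_{1,n}$ (keeping, say, $e_{1,n}$); contracting $\{e_{2,1}, e_{n,1}\}$ then identifies $e_{2,1}$ and $e_{n,1}$. After these two identifications the vertex set of the resulting complex is exactly $\{e_{i,a}, e_{a,i} : 1\le i\le 2,\ 3\le a\le n\}$ together with $e_{1,n}$ playing the role formerly played also by $e_{1,2}$, and $e_{n,1}$ that of $e_{2,1}$ — i.e., a vertex set in bijection with that of $\Pc_{K_{2,n-2}}$ after suitably relabeling (the vertices $e_{1,n}, e_{n,1}$ of $\Pc_{K_{2,n-2}}$ absorb the extra vertices). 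I would then verify that a subset is a face of the contracted complex if and only if the corresponding subset (under this relabeling) is a face of $\Delta_{K_{2,n-2}}$. Concretely: edge-contraction of $\Delta$ at $F=\{v,w\}$ produces the complex $\{H : v\notin H\}\cup\{H\setminus\{v\}\cup\{w\} : v\in H\}$, so one must check that the ``new'' faces created (those passing through the identified vertex) are precisely the extra faces through $e_{1,n}$ and $e_{n,1}$ recorded in \eqref{eq: E K} — in particular the edge $\{e_{1,n}, e_{n,2}\}$, which is the unique edge in $E(\Delta_{K_{2,n-2}})\setminus E(\Delta_{G_n})$, should appear exactly as the image of the face $\{e_{1,2}, e_{n,2}\}$ of $\Delta_{G_n}$ (an edge by \eqref{eq: E G}) under the first contraction. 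One also has to check that no face of $\Delta_{G_n}$ gets destroyed, i.e. that the contractions do not collapse a face into a non-face, which amounts to checking that $e_{1,2}$ and $e_{1,n}$ are never both in a common face of $\Delta_{G_n}$ other than via the contracted edge itself, and similarly for $e_{2,1}, e_{n,1}$; this follows because $\{e_{1,2},e_{1,n}\}$ and $\{e_{2,1},e_{n,1}\}$ are edges of $\Delta_{G_n}$ (so the contractions are legitimate) and a short check of the cycle structure of $G_n$ shows that the links behave as expected.

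The main obstacle I anticipate is purely bookkeeping: keeping the two relabelings straight (the one identifying the leftover vertices of $\Pc_{K_{2,n-2}}$ with those of $\Pc_{G_{n-1}}$ in part (i), and the one absorbing the contracted vertices into $e_{1,n}, e_{n,1}$ in part (ii)) and then patiently matching the six families of edges in \eqref{eq: E K} against the six families in \eqref{eq: E G}, case by case, to confirm both containments. There is no conceptual difficulty: every step reduces to the translation between oriented cycles of the graph and non-faces of $\Delta_<$ via \Cref{lem: nonfaces of Delta<}, together with the elementary fact that the vertex $n$ (resp. the edge $12$) is the distinguished feature distinguishing $K_{2,n-2}$ from $G_{n-1}$ and from $G_n$. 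Once the $1$-skeleta are shown to match, one can either argue directly that all higher non-faces match as well (since every non-face of $\Delta_<$ contains a non-face of dimension $\le 1$ coming from an antipodal pair, or a non-face coming from a triangle or from a $4$-cycle, and these are controlled by the degree-$2$ vertex / the distinguished edge), or invoke the uniqueness of the Gröbner-induced triangulation for the fixed order $<$ to conclude the isomorphism of complexes.
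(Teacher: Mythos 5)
Your overall strategy — compare the $1$-skeleta via explicit relabelings and then upgrade to an isomorphism of complexes — is the same as the paper's, and your identification of the special edge $\{e_{1,n},e_{n,2}\}$ arising from $\{e_{1,2},e_{n,2}\}$ under the contraction is exactly the right detail to focus on. However, there are two genuine gaps in part (ii).

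First, the step ``agreement of $1$-skeleta upgrades to an isomorphism of complexes'' is valid only when \emph{both} complexes are flag. For $\Delta_{K_{2,n-2}}$ this is given by \Cref{lem: nonfaces of Delta<}, but for the contracted complex $(\Delta_{G_n}/\{e_{1,2},e_{1,n}\})/\{e_{2,1},e_{n,1}\}$ it is \emph{not} automatic — edge contraction does not, in general, preserve flagness. The paper proves this explicitly: a contraction of a flag complex at an edge $F$ stays flag precisely when $F$ is not contained in an induced $4$-cycle of the $1$-skeleton (this is the criterion from the proof of \cite[Corollary 6.2]{LN}), and one then checks that any induced $4$-cycle through $\{e_{1,2},e_{1,n}\}$ would be forced to contain $e_{2,n}$ (the unique vertex adjacent to $e_{1,n}$ but not $e_{1,2}$), yet $\{e_{2,n},e_{n,2}\}$ is a non-edge, a contradiction; and similarly for $\{e_{2,1},e_{n,1}\}$. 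Your proposal gestures at ``a short check of the cycle structure of $G_n$ shows that the links behave as expected,'' but this does not identify the needed criterion, nor would ``the contractions are legitimate'' (edge contraction is always a well-defined operation on a simplicial complex, so there is nothing to legitimize; the real question is whether the output stays flag).

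Second, your fallback reason — ``invoke the uniqueness of the Gr\"obner-induced triangulation for the fixed order $<$ to conclude the isomorphism of complexes'' — does not apply: the contracted complex is not, a priori, a Gr\"obner-induced triangulation of anything, so uniqueness of $\Delta_<$ for $K_{2,n-2}$ gives you no leverage. The alternative of matching all minimal non-faces directly would work in principle, but would require independently characterizing the minimal non-faces of the contracted complex (not covered by \Cref{lem: nonfaces of Delta<}), which is more laborious than the flagness argument. A smaller inaccuracy in part (i): you describe deleting all four of $e_{1,n},e_{n,1},e_{2,n},e_{n,2}$ from $V(\Delta_{K_{2,n-2}})$ and claim the remainder is $V(\Delta_{G_{n-1}})$; a count shows this is off by two. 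The correct map deletes only $e_{2,n},e_{n,2}$ (the join factor) and \emph{relabels} $\pm e_{1,n}\mapsto \pm e_{1,2}$, which is how the paper's $\varphi$ is defined.
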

Observe that $\{e_{1,2},e_{1,n}\}$ is an edge of $\Delta_{G_n}$ and $\{e_{2,1},e_{n,1}\}$ is an edge of $\Delta_{G_n}/\{e_{1,2},e_{1,n}\}$. Hence it makes sense to consider the corresponding edge contractions in (ii).

\begin{proof} 
To prove (i) we first note that, since by \Cref{lem: nonfaces of Delta<} both complexes involved in the statement are flag spheres, it suffices to provide an isomorphism between the $1$-skeleta of the corresponding complexes. For this aim, let $\varphi: \Delta_{K_{2,n-2}}\to \langle e_{2,n},e_{n,2}\rangle*\Delta_{G_{n-1}}$ be the simplicial map induced by $\varphi(\pm e_{1,n})=\pm e_{1,2}$ and  $\varphi(e)=e$ for any other vertex $e\in \Delta_{K_{2,n-2}}$. By comparing \eqref{eq: E K} and \eqref{eq: E G}, it is easily seen that $\varphi$ is a simplicial isomorphism between the $1$-skeleta of $\Delta_{K_{2,n-2}}$ and $\langle e_{2,n},e_{n,2}\rangle*\Delta_{G_{n-1}}$.

To show (ii), observe that by \Cref{lem: nonfaces of Delta<} $\Delta_{K_{2,n-2}}$ and $\Delta_{G_n}$ are flag simplicial complexes. We first show that so is $(\Delta_{G_n}/\{e_{1,2},e_{1,n}\})/\{e_{2,1},e_{n,1}\}$. For this it is enough to show that $\{e_{1,2},e_{1,n}\}$ and $\{e_{2,1},e_{n,1}\}$ are not contained in any induced subcomplex of $\Delta_{G_n}$ and $\Delta_{G_n}/\{e_{1,2},e_{1,n}\}$, respectively, that is isomorphic to a $4$-cycle (see \cite[proof of Corollary 6.2]{LN}). If, by contradiction, such a subcomplex exists in $\Delta_{G_n}$, then it has to contain the vertex $e_{2,n}$ (respectively, $e_{n,2}$) since $e_{2,n}$ (respectively, $e_{n,2}$) is the only vertex lying in an edge with $e_{1,n}$ but not $e_{1,2}$ (respectively, vice versa). As $\{e_{2,n},e_{n,2}\}$ is not an edge of $\Delta_{G_n}$, such a subcomplex cannot exist. The same reasoning shows the corresponding statement for $\Delta_{G_n}/\{e_{1,2},e_{1,n}\}$ and $\{e_{2,1},e_{n,1}\}$. In particular, it follows that  $(\Delta_{G_n}/\{e_{1,2},e_{1,n}\})/\{e_{2,1},e_{n,1}\}$ is flag. We consider the simplicial map $\xi: (\Delta_{G_n}/\{e_{1,2},e_{1,n}\})/\{e_{2,1},e_{n,1}\}\to \Delta_{K_{2,n-2}}$, defined by $\xi(\pm e_{1,2})=\pm e_{1,n}$ and $\xi(e)=e$ for any other vertex of  $(\Delta_{G_n}/\{e_{1,2},e_{1,n}\})/\{e_{2,1},e_{n,1}\}$. Using \eqref{eq: E K}, \eqref{eq: E G} and the definition of edge contraction it is easy to check that $\xi$ induces a simplicial isomorphism between the $1$-skeleta of $(\Delta_{G_n}/\{e_{1,2},e_{1,n}\})/\{e_{2,1},e_{n,1}\}$ and $\Delta_{K_{2,n-2}}$, which shows the claim.
\end{proof}

We record here an explicit computation that will come in handy in the proof of \Cref{thm:lutznevoSEP} below.
\begin{example}\label{ex:contraction}
Consider the graphs $K_4$ and $G_4$ (with the labeling described previously) and order their edges so that $34 < 24 < 23 < 14 < 13 < 12$. Let $\Delta_{K_4}$ and $\Delta_{G_4}$ be the respective (flag) unimodular triangulations of $\partial\Pc_{K_4}$ and $\partial\Pc_{G_4}$ induced by this choice.

Consider the sequence of edge contractions
\[
        \Delta_{K_4}=:\Delta_0\to \Delta_1:=(\Delta_{K_4}/\{e_{1,4},e_{3,4}\})\to \Delta_2:=(\Delta_{K_4}/\{e_{1,4},e_{3,4}\})/\{e_{4,1},e_{4,3}\}.\]
        
One can check that $\Delta_1$ and $\Delta_2$ are flag spheres and, since $\Delta_0$ is $2$-dimensional, both $\lk_{\Delta_0}(\{e_{1,4},e_{3,4}\})$ and $\lk_{\Delta_1}(\{e_{4,1},e_{4,3}\})$ consist of two vertices. We claim that $\Delta_2$ is isomorphic to $\Delta_{G_4}$: since both complexes are flag, this can be verified by exhibiting a simplicial map between $\Delta_2$ and $\Delta_{G_4}$ which is an isomorphism on the $1$-skeleta. The map $\varphi\colon \Delta_2 \to \Delta_{G_4}$ defined by $\varphi(\pm e_{1,2}) = \pm e_{1,3}$, $\varphi(\pm e_{1,3}) = \pm e_{2,3}$, $\varphi(\pm e_{1,4}) = \pm e_{4,2}$, $\varphi(\pm e_{2,3}) = \pm e_{2,1}$, $\varphi(\pm e_{2,4}) = \pm e_{4,1}$ gives the desired result.
\end{example}

We can now state the main result of this section.

\begin{theorem} \label{thm:lutznevoSEP}
Let $G$ be a connected graph on $n \geq 5$ vertices. Then $\gamma_2(\Pc_{G})=0$ if and only if there exist a flag unimodular triangulation $\Delta_G$ of $\partial\Pc_G$ and a sequence of edge contractions $\Delta_{G}=:\Delta_0\to \Delta_1:=\Delta_0/F_1\to \Delta_2:=\Delta_1/F_2\to \cdots \to \Delta_{2k}:=\Delta_{2k-1}/F_k$ such that
\begin{itemize}
    \item[(i)] $\Delta_i$ is a flag sphere for every $0 \leq i \leq 2k$;
    \item[(ii)] $\Delta_{2k} \cong \Diamond_{n-2}$;
    \item[(iii)] $\lk_{\Delta_{i-1}}(F_i)\cong \Diamond_{n-4}$ for every $1 \leq i \leq 2k$.
\end{itemize}
Moreover, if the conditions above are met, for every $0 \leq i \leq k$ the complex $\Delta_{2i}$ is a unimodular triangulation of the boundary of some symmetric edge polytope.
\end{theorem}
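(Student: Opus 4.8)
The plan is to reduce the statement to the two families of graphs identified in \Cref{cor:gamma2=0general}, namely $K_{2,n-2}$ and $G_n$ (together with the base case $K_4$ when $n=4$, which however does not arise here since $n\geq 5$), and then to build the required sequences of edge contractions explicitly using \Cref{lem: two steps}. First I would invoke \Cref{cor:gamma2=0general}: since $G$ is connected on $n\geq 5$ vertices, $\gamma_2(\Pc_G)=0$ forces $G$ to have exactly one $2$-connected component that is not an edge, and this component must be isomorphic to $G_m$ or $K_{2,m-2}$ for some $m\leq n$. Because $1$-clique sums with single edges do not change the symmetric edge polytope (\cite[Remark 4.8]{DDM}), we may assume $G=G_n$ or $G=K_{2,n-2}$ outright; the triangulations $\Delta_{G_n}$ and $\Delta_{K_{2,n-2}}$ from the discussion preceding \Cref{lem: two steps} will be our choice of flag unimodular triangulation $\Delta_G$. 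For the converse direction, if such a sequence of contractions exists, then \eqref{eq:gammaContraction} applied repeatedly together with condition (iii) (which gives $\gamma_1(\lk_{\Delta_{i-1}}(F_i))=\gamma_1(\Diamond_{n-4})=0$) and the vanishing of $\gamma_2(\Diamond_{n-2})$ yields $\gamma_2(\Delta_G)=0$, and since $\Delta_G$ triangulates $\partial\Pc_G$ and $\Pc_G$ is reflexive with a unimodular triangulation, $\gamma_2(\Pc_G)=\gamma_2(\Delta_G)=0$.

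The core of the argument is the forward direction, carried out by induction on $n$. The two parts of \Cref{lem: two steps} are tailored for exactly this: part (i) says $\Delta_{K_{2,n-2}}\cong\langle e_{2,n},e_{n,2}\rangle*\Delta_{G_{n-1}}$, i.e. $\Delta_{K_{2,n-2}}$ is the suspension of $\Delta_{G_{n-1}}$, while part (ii) says that two successive edge contractions turn $\Delta_{G_n}$ into $\Delta_{K_{2,n-2}}$. So the strategy is: starting from $\Delta_{G_n}$, perform the two contractions along $F_1=\{e_{1,2},e_{1,n}\}$ and $F_2=\{e_{2,1},e_{n,1}\}$ to reach $\Delta_{K_{2,n-2}}$; this is a suspension of $\Delta_{G_{n-1}}$, so a suspension of the inductive sequence for $G_{n-1}$ extends it all the way down. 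One checks that suspension preserves flagness and that the link condition (iii) is preserved under suspension (suspending raises the dimension by one and suspends every link, turning $\Diamond_{n-5}$-links into $\Diamond_{n-4}$-links, and $\Diamond_{n-3}$ into $\Diamond_{n-2}$). The base case is $n=5$: here one verifies by hand (or by a short computation in the spirit of \Cref{ex:contraction}) that $\Delta_{G_5}$ and $\Delta_{K_{2,3}}$ each admit such a sequence terminating in $\Diamond_3$, using that $\Delta_{G_4}$ and $\Delta_{K_4}$ contract to $\Diamond_2$ as in \Cref{ex:contraction} and that $\Delta_{K_{2,3}}$ is the suspension of $\Delta_{G_4}$ by part (i). Starting from $\Delta_{K_{2,n-2}}$ instead is even easier: it is already a suspension of $\Delta_{G_{n-1}}$, so one directly suspends the sequence for $G_{n-1}$.

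For the link condition one has to be careful to verify that $\lk_{\Delta_{G_n}}(F_1)\cong\Diamond_{n-4}$ and $\lk_{\Delta_{G_n}/F_1}(F_2)\cong\Diamond_{n-4}$. This is where the explicit edge lists \eqref{eq: E G} and \eqref{eq: E K} are essential: the link of $F_1=\{e_{1,2},e_{1,n}\}$ in the flag complex $\Delta_{G_n}$ is the induced subcomplex on the common neighbors of $e_{1,2}$ and $e_{1,n}$, and from \eqref{eq: E G} these common neighbors are the vertices $\pm e_{a,2}$ for $3\leq a\leq n-1$ together with $\pm e_{1,n}$'s partner — a careful bookkeeping shows this vertex set has $2(n-3)$ elements coming in antipodal pairs with no edge between antipodes, so the (flag) induced subcomplex is the boundary of the $(n-3)$-cross-polytope, i.e. $\Diamond_{n-4}$. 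The analogous computation for $F_2$ in $\Delta_{G_n}/F_1$ uses the definition of edge contraction and the same edge lists. I expect this link bookkeeping, and the verification that suspension behaves correctly with respect to all three conditions, to be the main obstacle — not conceptually deep, but requiring precise handling of the antipodal-pair structure and of how edge contraction interacts with the flag property (for which \cite[proof of Corollary 6.2]{LN} and \Cref{lem: two steps}'s own argument provide the template). Finally, the last sentence of the theorem is immediate from the construction: the complexes $\Delta_{2i}$ appearing along the way are, by the inductive build-up, precisely the triangulations $\Delta_{G_{n-i}}$ or their suspensions $\Delta_{K_{2,n-i-2}}$ (and suspensions of boundaries of reflexive polytopes with unimodular triangulations are again of this form, since $\Pc_{\text{susp}} = \Pc_G \oplus \Pc_{P_1}$ corresponds to adding a pendant edge, cf.\ \cite[Proposition 4.2]{OT-2021}), each of which triangulates the boundary of a symmetric edge polytope.
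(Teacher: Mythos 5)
Your overall strategy matches the paper's: reduce via Corollary~\ref{cor:gamma2=0general} to a short list of graphs, then iterate Lemma~\ref{lem: two steps} to produce the chain of contractions, with the ``if''-direction following from \eqref{eq:gammaContraction}. Two points are worth flagging, one a genuine gap and one a genuine difference in method.

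The gap is in your case analysis. After invoking Corollary~\ref{cor:gamma2=0general} you assert that the unique non-edge $2$-connected component must be $G_m$ or $K_{2,m-2}$ and that $K_4$ ``does not arise here since $n\geq 5$.'' That is not so: a connected graph on $n\geq 5$ vertices can perfectly well have $K_4$ as its unique non-edge $2$-connected component with pendant trees attached, and Corollary~\ref{cor:gamma2=0general} lists $K_4$ explicitly. You also silently drop the case that $G$ is a tree (no non-edge component), which, while trivial ($\Delta_G \cong \Diamond_{n-2}$, take $k=0$), still needs a sentence. The paper handles the $K_4$ case precisely by going through Example~\ref{ex:contraction}, which is why that example is there. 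Relatedly, your reduction ``we may assume $G=G_n$ or $G=K_{2,n-2}$ outright, since $1$-clique sums with single edges do not change the symmetric edge polytope'' is misstated: attaching a pendant edge \emph{does} change $\Pc_G$ (it replaces it by its free sum with a segment), and $\Pc_{G_m}\oplus\Diamond_{n-m}$ is not $\Pc_{G_n}$. What is true, and what you implicitly want, is that the boundary triangulation becomes a join $\Delta_{G_m}\ast\Diamond_{n-m-1}$, and one then suspends the chain for $\Delta_{G_m}$ (or $\Delta_{K_4}$); the paper makes this reduction explicit rather than pretending $G$ itself can be replaced.

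The difference in method is your treatment of condition~(iii). You propose to verify $\lk_{\Delta_{G_n}}(F_1)\cong\Diamond_{n-4}$ and $\lk_{\Delta_{G_n}/F_1}(F_2)\cong\Diamond_{n-4}$ directly from the edge lists \eqref{eq: E G}, \eqref{eq: E K}. That is workable (though your description of the common neighbours is garbled; the vertex set of $\lk_{\Delta_{G_n}}(\{e_{1,2},e_{1,n}\})$ is $\{e_{1,a},e_{a,2}~:~3\leq a\leq n-1\}$, with the unique non-edges being $\{e_{1,a},e_{a,2}\}$), but the paper avoids all of this bookkeeping with a slicker argument: each $\lk_{\Delta_{i-1}}(F_i)$ is a flag sphere, so $\gamma_1\geq 0$ by Gal/Meshulam; two applications of \eqref{eq:gammaContraction} between consecutive even-indexed complexes, together with $\gamma_2(\Delta_{2j})=0$ for all $j$ (from Theorem~\ref{cor: gamma_2=0} applied to the intermediate graphs), force the sum of two nonnegative $\gamma_1$'s to vanish, hence each link has $\gamma_1=0$ and must be a cross-polytope boundary. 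Your direct route buys you nothing except more computation here, and it does not interact as cleanly with the join/suspension step, whereas the $\gamma_1$-argument transfers automatically.
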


\begin{proof}
    The validity of the ``if''-part has already been observed for general flag PL-spheres at the beginning of this section.
    
    For the other direction assume first that $G$ is $2$-connected. By \Cref{cor: gamma_2=0} we know that $\gamma_2(P_G)=0$ if and only if either $G\cong K_{2,n-2}$ or $G\cong G_n$. Iteratively applying \Cref{lem: two steps} and recalling that $(\Delta * \Gamma) / F = \Delta * (\Gamma / F)$ whenever $F$ is a face of $\Gamma$, we obtain the following chain of edge contractions and isomorphisms:
    \begin{align*}
        \Delta_{G_n}=\Delta_0&\to \Delta_1=(\Delta_{G_n}/\{e_{1,2},e_{1,n}\})\to \Delta_2=(\Delta_{G_n}/\{e_{1,2},e_{1,n}\})/\{e_{2,1},e_{n,1}\}\\
        &\overset{(ii)}{\cong}\Delta_{K_{2,n-2}}\overset{(i)}{\cong}=
        \langle e_{2,n},e_{n,2}\rangle *\Delta_{G_{n-1}}\\
        &\to\Delta_3=\langle e_{2,n},e_{n,2}\rangle * (\Delta_{G_{n-1}}/\{e_{1,2},e_{1,n-1}\})\\
        &\to\Delta_4=\langle e_{2,n},e_{n,2}\rangle * (\Delta_{G_{n-1}}/\{e_{1,2},e_{1,n-1}\})/\{e_{2,1},e_{n-1,1}\}\\
        &\overset{(ii)}{\cong}\langle e_{2,n},e_{n,2}\rangle*\Delta_{K_{2,n-3}}\overset{(i)}{\cong}\langle e_{2,n},e_{n,2}\rangle*\langle e_{2,n-1},e_{n-1,2}\rangle*\Delta_{G_{n-2}}\\
        &\hspace{100pt} \vdots\\
        & \Delta_{2n}\overset{(ii)}{\cong}\langle e_{2,n},e_{n,2}\rangle * \cdots * \langle e_{2,4},e_{4,2}\rangle * \Delta_{K_{2,1}} \cong \Diamond_{n-2},
    \end{align*}
    where the last isomorphism holds as  $\Delta_{K_{2,1}} \cong \Diamond_{1}$, and the $(n-3)$-fold suspension over $\Diamond_{1}$ is isomorphic to $\Diamond_{n-2}$. It follows from \Cref{lem: two steps} that all complexes in this sequence are flag. Moreover, the proof of \Cref{lem: two steps} (ii) shows that the links of the contracted edges need to satisfy the link condition, implying that all complexes in the sequence are triangulations of spheres (see \cite[Section 6]{LN} and \cite{NevoVanKampen}). Since the link of a simplex in a flag sphere is again a flag sphere, and $\gamma_1\geq 0$ for all flag spheres \cite{Gal, Meshulam}, a double application of \eqref{eq:gammaContraction} together with \Cref{cor: gamma_2=0} implies that $\gamma_1=0$ for every link of an edge that is contracted. As the only flag spheres with $\gamma_1=0$ are the boundaries of cross-polytopes (see \cite{Gal, Meshulam}), (iii) follows.

    The ``Moreover''-statement follows from the above sequence of contractions and the fact that adding a leaf to a graph corresponds to taking the suspension of the corresponding symmetric edge polytope.

        Finally, assume $G$ is not $2$-connected. Let $G=H_1\cup\cdots \cup H_k$ be its decomposition in the $2$-connected components $H_i$. Then $\Delta_{G}=\Delta_{H_1}*\cdots *\Delta_{H_k}$. \Cref{cor:gamma2=0general} implies that there exists at most one $i$ such that $H_i$ is not a single edge. If all $H_i$ are edges, then $\Pc_G$ is a cross-polytope and there is nothing to show in this case. Otherwise, without loss of generality, we can assume that $H_1$ is not an edge. It follows from the above proof and \Cref{ex:contraction} that $H_1$ admits edge contractions as required. As all $\Pc_{H_i}$ are line segments for any $2\leq i\leq k$ and since edge contractions and taking links commute with taking joins, the claim follows.
\end{proof}

\section{Symmetric edge polytopes for Erd\H{o}s-R\'enyi random graphs} \label{sec:probabilistic}
In this section, we consider symmetric edge polytopes for random graphs generated by the Erd\H{o}s-R\'enyi model. The ultimate goal is to prove \Cref{thm:B}. We try to keep this section self-contained and tailored for a reader without much knowledge of random graphs. However, we recommend \cite{AS-TheProbabilisticMethod,bollobas} and \cite{book-RandomGraphs} for more background on Erd\H{o}s-R\'enyi random graphs.

\subsection{Edges and cycles in Erd\H{o}s-R\'enyi graphs}
We write $G(n,p)$ for the Erd\H{o}s-R\'enyi probability model of random graphs on vertex set $[n]$, where edges are chosen independently with probability $p\in [0,1]$. Usually, $p:\mathbb{N}\to [0,1]$ is a function depending on $n$ that tends to $0$ at some rate as $n$ goes to infinity. For ease of notation we mostly just write $p$. We will say that a graph property $\mathcal{A}$, i.e., a family of graphs closed under isomorphism, holds \emph{asymptotically almost surely} (a.a.s. for short) or \emph{with high probability} if the probability that $G\in G(n,p)$ has property $\mathcal{A}$ tends to $1$ as $n$ goes to infinity, i.e.,
$$
\lim_{n\to\infty}\Pb(G\in \mathcal{A})=1 \qquad \text{for } G\in G(n,p).
$$
 In the following, given $G\in G(n,p)$, we will consider the symmetric edge polytope $\Pc_{G}$ of $G$. It follows from \eqref{eq:gamma_faces} and \Cref{lem: nonfaces of Delta<} that the $\gamma$-vector of $\Pc_{G}$ is independent of the vertex and edge labels of $G$ and hence, in particular, properties as $\gamma(\Pc_{G})$ being nonnegative or exhibiting a certain growth are graph properties as defined above. For the study of $\gamma_k(\Pc_G)$, the key idea is that its growth is governed by the number of cycles of length at most $2k$ in $G$. Therefore, we will take a detour through studying the number of cycles of length smaller than or equal to $2k$ in $G$ for $G\in G(n,p)$. Most of the results we need can be found somewhere in the literature (most often in more general form) and are probably well-known to the stochastics community. However, since we do not assume the typical reader of this article to be entirely familiar with this topic, we include proofs of most of the needed statements to keep this article as self-contained as possible.

We start by considering the number $X_E(G)$ of edges of $G\in G(n,p)$. This random variable is highly concentrated around its expectation. 

\begin{lemma}\label{lem:edgesER}
\begin{itemize}
    \item[(i)] $\Eb(X_E)=\binom{n}{2}p$,
    \item[(ii)] $\Var(X_E)=\binom{n}{2}(p-p^2)$,
    \item[(iii)] $\lim_{n\to\infty}\Pb(|X_E-\Eb(X_E)|\leq A\Eb(X_E))=1$ for any $A\in \R_{>0}$  and $p(n)=n^{-\beta}$ with $0\leq\beta\leq1$. 
\end{itemize}
\end{lemma}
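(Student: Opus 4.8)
The random variable $X_E$ is a sum of $\binom{n}{2}$ independent Bernoulli random variables, one for each potential edge $ij$ with $1 \le i < j \le n$. Hence $X_E = \sum_{ij} \mathbbm{1}_{ij \in E(G)}$, where each indicator has mean $p$ and variance $p - p^2 = p(1-p)$.

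\textbf{Parts (i) and (ii).} These follow immediately by linearity of expectation and, for the variance, by independence: $\Eb(X_E) = \binom{n}{2} p$ and $\Var(X_E) = \sum_{ij} \Var(\mathbbm{1}_{ij \in E(G)}) = \binom{n}{2} p(1-p) = \binom{n}{2}(p - p^2)$, since the covariance terms vanish by independence.

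\textbf{Part (iii).} The natural tool here is Chebyshev's inequality, which gives, for any $t > 0$,
\[
    \Pb\big(|X_E - \Eb(X_E)| \ge t\big) \le \frac{\Var(X_E)}{t^2}.
\]
Taking $t = A\,\Eb(X_E)$ and using parts (i) and (ii), the right-hand side becomes
\[
    \frac{\binom{n}{2}(p - p^2)}{A^2 \binom{n}{2}^2 p^2} = \frac{1 - p}{A^2 \binom{n}{2} p} \le \frac{1}{A^2 \binom{n}{2} p}.
\]
With $p(n) = n^{-\beta}$ and $0 \le \beta \le 1$, we have $\binom{n}{2} p \sim \tfrac12 n^{2-\beta}$, which tends to infinity as $n \to \infty$. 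Therefore the bound above tends to $0$, so $\Pb(|X_E - \Eb(X_E)| \le A\,\Eb(X_E)) \to 1$. (Here I am implicitly using $p \le 1$, which holds for $n$ large since $\beta \ge 0$.)

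\textbf{Main obstacle.} There is essentially no obstacle: the only mild care needed is to check that the "concentration denominator" $\binom{n}{2} p = \Theta(n^{2-\beta})$ diverges, which is exactly where the hypothesis $\beta \le 1$ (in fact $\beta < 2$ would suffice) is used. One might also phrase the whole thing via a Chernoff/Hoeffding bound for sums of independent Bernoullis to get an exponentially small failure probability, but Chebyshev is entirely sufficient for an a.a.s.\ statement and keeps the argument elementary, which matches the stated goal of keeping this section self-contained.
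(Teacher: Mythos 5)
Your proof is correct and follows essentially the same route as the paper: parts (i) and (ii) via the Bernoulli decomposition and independence, and part (iii) via Chebyshev's inequality with the bound $\Var(X_E)/(A^2\Eb(X_E)^2) = \Theta(n^{-(2-\beta)}) \to 0$. The paper just states (i) and (ii) are an "easy computation" and gives the identical Chebyshev argument for (iii).
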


\begin{proof}
(i) and (ii) follow from an easy computation. For (iii) Chebyshev's inequality implies
\begin{equation*}
    \Pb(|X_E-\Eb(X_E)|>A\Eb(X_E))\leq \frac{\Var(X_E)}{A^2\Eb(X_E)^2}
    =\frac{p-p^2}{A^2\binom{n}{2}p^2}\leq \frac{1}{Bn^{2-\beta}},
\end{equation*}
where $B\in \R$ is a positive constant. Since $\beta\leq 1$, the above expression tends to $0$ as $n$ goes to infinity, which shows the claim.
\end{proof}

For $G\in G(n,p)$ and $k \in \N$ we denote by $X_k(G)$ and $X(G)$ the number of $k$-cycles and cycles of any length in $G$, respectively. Moreover, based on the following lemma (see e.g.~\cite[Theorem 5.3]{book-RandomGraphs}) we will divide our study of $\gamma_\ell(\Pc_G)$, where $G\in G(n,p)$, into two cases.
\begin{lemma} \label{lem:nocycles}
Let $k\geq 3$ and let $G\in G(n,p)$. Then
$$\lim_{n\to \infty}\Pb(X_k>0)=\begin{cases}
0 \mbox{ if } \lim_{n\to \infty}np(n)= 0\\
1 \mbox{ if } \lim_{n\to \infty}np(n)= \infty.
\end{cases}.$$
\end{lemma}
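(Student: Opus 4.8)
\textbf{Proof plan for \Cref{lem:nocycles}.} This is a standard first- and second-moment computation, so my plan is to treat the two cases separately. For the first case, assume $\lim_{n\to\infty} np(n) = 0$. The plan is to bound $\Pb(X_k > 0) \le \Eb(X_k)$ by Markov's inequality and show the expectation vanishes. There are $\binom{n}{k}\frac{(k-1)!}{2} = \frac{n!}{2k(n-k)!}$ distinct (unlabeled) $k$-cycles on the vertex set $[n]$, each present in $G$ with probability $p^k$, so
\[
\Eb(X_k) = \frac{n!}{2k(n-k)!}\, p^k \le \frac{1}{2k}\,(np)^k,
\]
using $n!/(n-k)! \le n^k$. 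Since $k$ is fixed and $np \to 0$, this tends to $0$, hence $\Pb(X_k>0)\to 0$.

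For the second case, assume $\lim_{n\to\infty} np(n) = \infty$. Here I would use the second moment method: by the Paley--Zygmund-type inequality (or directly Chebyshev), $\Pb(X_k = 0) \le \Var(X_k)/\Eb(X_k)^2$, so it suffices to show $\Eb(X_k^2)/\Eb(X_k)^2 \to 1$. First note $\Eb(X_k) = \frac{n!}{2k(n-k)!}p^k \sim \frac{(np)^k}{2k} \to \infty$. To control the variance, write $X_k = \sum_C \mathbbm{1}_C$ where $C$ ranges over the $k$-cycles on $[n]$, and expand $\Eb(X_k^2) = \sum_{C,C'} \Pb(\mathbbm{1}_C \mathbbm{1}_{C'} = 1)$. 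The plan is to split the double sum according to the number $j = |E(C)\cap E(C')|$ of shared edges. When $j = 0$ the pair contributes $p^{2k}$ (with some vertices possibly shared, but no shared edges means independence), and summing over all such pairs gives at most $\Eb(X_k)^2$; the diagonal-ish terms with $j \ge 1$ need to be shown to be of lower order. For each fixed $j \ge 1$, the number of ordered pairs $(C, C')$ sharing exactly $j$ edges is $O(n^{2k - j - 1})$ — roughly, $C$ is chosen in $O(n^k)$ ways, then the shared part forces at least $j+1$ vertices of $C'$ and leaves at most $2k - j - 1$ free vertices for the overlap structure (the $-1$ coming from the fact that sharing an edge links the two cycles) — and each such pair contributes $p^{2k - j}$. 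Hence the contribution of overlap size $j$ is $O(n^{2k-j-1} p^{2k-j}) = O\bigl((np)^{2k-j}/n\bigr)$, while $\Eb(X_k)^2 \asymp (np)^{2k}$; since $np \to \infty$ (and in particular $(np)^{2k-j}/n \le (np)^{2k-1}/n \cdot (\text{bounded}) = o((np)^{2k})$ once one checks $(np)^{-1} = o(1)$, which holds), each such term is $o(\Eb(X_k)^2)$. Summing over the finitely many values $j = 1, \dots, k$ gives $\Var(X_k) = o(\Eb(X_k)^2)$, so $\Pb(X_k = 0) \to 0$.

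\textbf{Main obstacle.} The only delicate point is the combinatorial bookkeeping in the second-moment estimate: correctly counting pairs of $k$-cycles with a prescribed edge-overlap and verifying that each overlap class of positive size is genuinely lower order. The slightly subtle step is justifying the exponent $2k - j - 1$ (rather than $2k - j$) in the vertex count when $j \ge 1$ — i.e., that a nontrivial shared edge set reduces the number of free vertices by at least one more than the number of shared edges — and handling the edge cases where the overlap is a union of several disjoint paths. Since this is a well-known result (I would cite \cite[Theorem 5.3]{book-RandomGraphs} as already indicated in the statement), I would likely keep this argument brief, either citing the reference directly or sketching the two moment computations and deferring the detailed overlap count to the literature.
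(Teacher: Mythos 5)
The paper does not prove this lemma at all: it is stated with the annotation ``(see e.g.\ \cite[Theorem 5.3]{book-RandomGraphs})'' and simply cited as a known fact, so there is no internal proof to compare against. Your two-moment argument is the standard textbook proof and is essentially correct; in fact the first-moment half (Markov plus $\Eb(X_k)\le (np)^k/2k$) is exactly the computation the paper carries out in Lemma~\ref{lem:nocyclesatall}, which is a strengthening of the subcritical case, and your overlap decomposition for the second moment is close in spirit to the variance bound the paper proves quantitatively in Proposition~\ref{prop:Expectation and Variance}, except that there the pairs $(C,C')$ are classified by the number of shared \emph{vertices} rather than shared \emph{edges} (using $|E(C\cap C')|\le |V(C\cap C')|$ to pass between them).

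One small point you should tighten: the claim that sharing $j\ge 1$ edges forces at least $j+1$ shared vertices fails at the boundary $j=k$, where $C=C'$ and the shared edges span only $k$ vertices, so the count of such pairs is $\Theta(n^k)$, not $O(n^{k-1})$. This does not damage the conclusion, since the resulting contribution is $O(n^k p^k)=O((np)^k)=o\bigl((np)^{2k}\bigr)$ because $np\to\infty$, but you should either treat $j=k$ separately or restrict the $j+1$-vertex argument to $1\le j\le k-1$. With that caveat handled, your proof is complete and correct, and it fills in an argument the paper chose to outsource to the literature.
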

In the following sections, we will distinguish between
\begin{itemize}
    \item the \emph{subcritical} regime, i.e., $\lim_{n\to \infty} n p(n)= 0$,
    \item the \emph{supercritical} regime, i.e., $\lim_{n\to \infty}n p(n)=\infty $.
\end{itemize}

\subsection{The subcritical regime}
We start by proving a strengthening of \Cref{lem:nocycles}. 
\begin{lemma}\label{lem:nocyclesatall}
Let $p(n)$ be such that $\lim_{n\to \infty}np(n)=0$. Then
$$
\lim_{n\to\infty}\Pb(X>0)=0.
$$
\end{lemma}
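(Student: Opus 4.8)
The statement to prove is that in the subcritical regime $np(n) \to 0$, asymptotically almost surely $G \in G(n,p)$ contains no cycles \emph{at all} (of any length), strengthening Lemma~\ref{lem:nocycles} which only controls cycles of a fixed length $k$. The natural approach is a first-moment (union bound) argument over all possible cycle lengths simultaneously. Let $X = \sum_{k \geq 3} X_k$ denote the total number of cycles in $G$. It suffices to show $\Eb(X) \to 0$, since then $\Pb(X > 0) \leq \Eb(X) \to 0$ by Markov's inequality.

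\emph{Key steps.} First I would compute $\Eb(X_k)$ for each $k$. The number of potential $k$-cycles on the vertex set $[n]$ is $\binom{n}{k}\frac{(k-1)!}{2} \leq \frac{n^k}{2k}$, and each appears with probability $p^k$, so $\Eb(X_k) \leq \frac{(np)^k}{2k}$. Summing over $3 \leq k \leq n$ gives
\[
\Eb(X) = \sum_{k=3}^{n} \Eb(X_k) \leq \sum_{k=3}^{n} \frac{(np)^k}{2k} \leq \sum_{k=3}^{\infty} (np)^k = \frac{(np)^3}{1-np},
\]
valid once $n$ is large enough that $np(n) < 1$ (which holds eventually since $np \to 0$). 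As $n \to \infty$, the right-hand side tends to $0$ because $np \to 0$. Hence $\Eb(X) \to 0$, and so $\Pb(X > 0) \to 0$, which is the claim.

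\emph{Main obstacle.} There is essentially no serious obstacle here — this is a routine first-moment computation. The only point requiring a modicum of care is making sure the geometric-series bound is legitimate: one must note that $np(n) < 1$ for all sufficiently large $n$ (guaranteed by $np \to 0$) before summing the geometric series, and one should also keep the upper summation index at $n$ rather than $\infty$ when bounding $\Eb(X_k)$ by counting cycles, then only afterwards relax to the infinite geometric sum. An alternative, if one prefers to avoid even this, is to split the sum at, say, $k = k_0$ for a fixed constant: the terms with $k \leq k_0$ each tend to $0$ by Lemma~\ref{lem:nocycles} (or directly), while the tail $\sum_{k > k_0}(np)^k$ is bounded by $(np)^{k_0+1}/(1-np) \to 0$. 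Either way the argument is short and the conclusion follows immediately.
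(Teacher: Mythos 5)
Your proof is correct and follows essentially the same first-moment argument as the paper: bound $\Pb(X>0)\leq \Eb(X)$ by Markov, compute $\Eb(X_k)\leq (np)^k/(2k)$, and sum the resulting series after noting $np(n)<1$ for large $n$. The only cosmetic difference is that the paper bounds $\frac{1}{2}\sum_{\ell\geq 1}\frac{(pn)^\ell}{\ell}$ by $-\ln(1-pn)$, while you use the cruder (but equally valid) geometric bound $\sum_{k\geq 3}(np)^k = (np)^3/(1-np)$.
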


\begin{proof}
By Markov's inequality  we have
 \begin{equation}\label{eq:Markov}
     \Pb(X\geq 1)\leq \Eb(X).
\end{equation}
For an $\ell$-cycle $C$ in $K_n$, let $X_C$ be the indicator variable on $G(n,p)$ with $X_C(G)=1$ if $C\subseteq G$ and $X_C(G)=0$, otherwise. 
Then 
$\Eb(X_C)=\Pb(X_C=1)=p^\ell$ and since there are $\binom{n}{\ell}$ ways to choose $\ell$ vertices in $K_n$  out of which $\frac{(\ell-1)!}{2}$ different cycles can be built, we conclude
\begin{equation}\label{eq:expectation}
     \Eb(X_\ell)=\binom{n}{\ell}\frac{(\ell-1)!}{2}p^\ell.
 \end{equation}
 Using the linearity of expectation and \eqref{eq:Markov}, we further obtain
 \begin{equation*}
     \Pb(X>0)\leq \sum_{\ell=3}^n\Eb(X_\ell)
     =\sum_{\ell=3}^n\binom{n}{\ell}\frac{(\ell-1)!}{2}p^\ell
     \leq\frac{1}{2}\sum_{\ell=3}^n \frac{(pn)^\ell}{\ell}
     \leq  \frac{1}{2}\sum_{\ell=1}^\infty \frac{(pn)^\ell}{\ell}
 \end{equation*}
 As $\lim_{n\to \infty}np(n)=0$, we have $0<np(n)<1$ for $n$ large enough and hence the above series is convergent and equals  $-\ln(1-pn)$.
 Taking the limit we obtain
 $$
 \lim_{n\to\infty}  \Pb(X>0)\leq \lim_{n\to\infty}-\ln(1-pn)=-\ln(1)=0.
 $$
\end{proof}

The next theorem describes the behavior of the $\gamma$-vector in the subcritical regime.
\begin{theorem}\label{gamma:no cycles}
Let $p(n)$ be such that $\lim_{n\to \infty}np(n)=0$. Then
\begin{equation*}
   \lim_{n\to\infty} \Pb(\gamma_k=0\text{ for all } k\geq 1)=1 .
\end{equation*}
\end{theorem}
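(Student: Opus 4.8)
The plan is to combine \Cref{lem:nocyclesatall} with the formula for the $\gamma$-vector from \Cref{lem: gamma1-gamma2 for G} and, more generally, with the fact that the entire $\gamma$-vector of $\Pc_G$ is a graph invariant depending only on the combinatorics of the triangulation $\Delta_<$. The key observation is that if $G$ has \emph{no} cycles at all, then $G$ is a forest, so each of its $2$-connected components is a single edge. By \Cref{lem: reduce to 2-connected} the $h^*$-polynomial of $\Pc_G$ is then a product of $h^*$-polynomials of the symmetric edge polytopes of single edges, each of which is a segment with $h^*$-polynomial $1+t$; equivalently $\cy(G_i) = 0$ for every $2$-connected component $G_i$, so $\gamma_1(\Pc_{G_i}) = 2\cy(G_i) = 0$ and $\gamma_2(\Pc_{G_i}) = 0$. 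Hence $h^*(\Pc_G, t) = (1+t)^{|E|}$, which forces $\gamma_k(\Pc_G) = 0$ for all $k \geq 1$. (Alternatively, one can invoke \Cref{cor:gamma2=0general}(i) for $\gamma_2$ and the analogous elementary fact for higher $\gamma_k$.)

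With that structural fact in hand, the proof is essentially a one-line probabilistic argument. First I would recall that $\lim_{n\to\infty} np(n) = 0$ is exactly the hypothesis of \Cref{lem:nocyclesatall}, so a.a.s. $X(G) = 0$, i.e.\ a.a.s.\ $G \in G(n,p)$ contains no cycle. Next I would note that the event $\{\gamma_k(\Pc_G) = 0 \text{ for all } k \geq 1\}$ contains the event $\{X(G) = 0\}$: indeed, on the latter event $G$ is a forest and the computation in the previous paragraph shows all higher $\gamma$-entries vanish. Therefore
\begin{equation*}
    \Pb\bigl(\gamma_k(\Pc_G) = 0 \text{ for all } k \geq 1\bigr) \geq \Pb\bigl(X(G) = 0\bigr) \xrightarrow[n\to\infty]{} 1,
\end{equation*}
and since probabilities are bounded above by $1$ the limit equals $1$, which is the claim.

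There is no real obstacle here; the statement is a direct corollary of \Cref{lem:nocyclesatall} once one records the (easy) fact that forests have trivial $\gamma$-vector. The only point that deserves a sentence of care is why $G$ being a forest makes \emph{all} $\gamma_k$ vanish and not just $\gamma_1$ and $\gamma_2$: this is because the $h^*$-polynomial multiplies over $2$-connected components (\Cref{lem: reduce to 2-connected} / \cite[Proposition 4.2]{OT-2021}), each edge contributes the factor $1+t$, and $(1+t)^{|E|}$ has $\gamma$-polynomial equal to the constant $1$ by the defining identity \eqref{eq:gamma_intro}. If one prefers to avoid even this much, one can simply say: the triangulation $\Delta_<$ of $\partial\Pc_G$ for a forest $G$ with $|E|$ edges is the boundary of the $|E|$-dimensional cross-polytope (no non-faces of type (ii) or (iii) arise since there are no cycles), whose $h$-vector is $(1+t)^{|E|}$, hence $\gamma_k = 0$ for $k \geq 1$.
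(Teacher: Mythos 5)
Your proof is correct and follows exactly the paper's own argument: apply \Cref{lem:nocyclesatall} to conclude $G$ is a.a.s.\ a forest, then observe that forests have trivial $\gamma$-vector. The paper simply asserts the latter fact without proof, whereas you spell it out (correctly) in two ways --- via the product formula for $h^*$ over $2$-connected components, and via the observation that $\Delta_<$ is the boundary of a cross-polytope when $G$ has no cycles.
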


\begin{proof}
    \Cref{lem:nocyclesatall} implies that a.a.s. $G\in G(n,p)$ is a forest. As the $\gamma$-vector of the symmetric edge polytope of a forest equals $(1,0,\ldots,0)$, the claim follows.
\end{proof}

\begin{remark}
    We want to point out that \Cref{lem:nocyclesatall} implies that, in the subcritical regime, a.a.s. the symmetric edge polytope is a free sum of cross-polytopes, the number of summands being the number of components of the graph, and as such a cross-polytope itself.
\end{remark}

\subsection{The supercritical regime}\label{sect:supercritical}
We now consider the situation where $\lim_{n\to\infty}np(n)= \infty$. We start by computing the variance of the number $X_k$ of $k$-cycles.

\begin{proposition}\label{prop:Expectation and Variance}
Let $p(n)$ be such that $\lim_{n\to\infty}np(n)=\infty$. For $k\in \N$, $k\geq 3$ and $n$ large enough we have
$$\Var(X_k)\leq A\cdot \Eb(X_k)^2\cdot (np(n))^{-1},$$
where $A\in \R$ is a positive constant.
\end{proposition}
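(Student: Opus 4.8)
The plan is to compute the second moment of $X_k$ directly and extract the variance. Write $X_k = \sum_C X_C$, where the sum ranges over all $k$-cycles $C$ in $K_n$ and $X_C$ is the indicator that $C\subseteq G$. Then
\[
\Eb(X_k^2) = \sum_{C,C'} \Eb(X_C X_{C'}) = \sum_{C,C'} p^{|E(C)\cup E(C')|},
\]
and $\Var(X_k) = \Eb(X_k^2) - \Eb(X_k)^2 = \sum_{C,C'}\left(p^{|E(C)\cup E(C')|} - p^{2k}\right)$. The summands with $E(C)\cap E(C') = \emptyset$ contribute $0$, so only pairs of $k$-cycles sharing at least one edge matter. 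First I would classify such pairs $(C,C')$ by the number $j$ of edges they share, where $1\le j\le k$ (the case $j=k$ being $C=C'$). For a pair sharing exactly $j$ edges we have $|E(C)\cup E(C')| = 2k - j$, so each such pair contributes $p^{2k-j} - p^{2k} \le p^{2k-j}$.

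Next I would count, up to the leading order in $n$, the number $N_j$ of ordered pairs of $k$-cycles in $K_n$ sharing exactly $j$ edges. The key point is a vertex count: a pair of $k$-cycles sharing $j$ edges together spans at most $2k - j$ vertices (each shared edge identifies one vertex between the two cycles beyond what a disjoint union would have; more carefully, the shared edges form a subgraph, and the union of the two cycles has at most $2k-j$ vertices whenever the $j$ shared edges lie along the cycles). Hence $N_j = O(n^{2k-j})$. Since $\Eb(X_k) = \binom{n}{k}\frac{(k-1)!}{2}p^k = \Theta(n^k p^k)$ by \eqref{eq:expectation}, we get
\[
\Var(X_k) \le \sum_{j=1}^{k-1} N_j\, p^{2k-j} = \sum_{j=1}^{k-1} O\!\left(n^{2k-j} p^{2k-j}\right) = O\!\left((np)^{2k-1}\right),
\]
the dominant term being $j=1$. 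Comparing with $\Eb(X_k)^2 = \Theta\!\left((np)^{2k}\right)$, this yields $\Var(X_k) = O\!\left(\Eb(X_k)^2 (np)^{-1}\right)$, which is exactly the claim with an absolute constant $A$ depending only on $k$, valid for $n$ large (using $np(n)\to\infty$ to absorb lower-order terms and to ensure $np>1$).

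The main obstacle is the careful combinatorial bookkeeping in the vertex count: one must verify that two $k$-cycles sharing exactly $j$ edges span at most $2k-j$ vertices, which requires checking that the shared edges, sitting inside both cyclic structures, cannot force extra coincidences that would lower the vertex count further (that is only helpful) but also cannot be arranged to raise it. A clean way to see the bound is: the union graph $C\cup C'$ has $|E(C\cup C')| = 2k - j$ edges and contains no isolated vertices, and every connected graph (and a fortiori every graph without isolated vertices, componentwise) with $m$ edges has at most $m+1$ vertices per component; since $C\cup C'$ has at most as many components as, say, $2$ — in fact when $j\ge 1$ the two cycles share a vertex so the union is connected — we get $|V(C\cup C')| \le |E(C\cup C')| + 1 = 2k-j+1$. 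This off-by-one is harmless: it only changes the constant $A$. Once this vertex bound is in hand, the rest is the routine summation above, dominated by the $j=1$ term, and the stated inequality follows.
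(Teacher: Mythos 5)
Your argument is correct and is, at heart, the same second-moment computation as the paper's: expand $\Eb(X_k^2)$ over ordered pairs of $k$-cycles, classify pairs by overlap, and extract the power saving $(np)^{-1}$ from the observation that shared structure forces the pair to live on fewer than $2k$ vertices. The only real difference is cosmetic: the paper classifies pairs by $\ell = |V(C\cap C')|$ and uses $|E(C\cap C')|\le|V(C\cap C')|$ to get $\Eb(X_C X_{C'})\le p^{2k-\ell}$ together with an exact count of $|\cH_\ell|$, whereas you classify by $j=|E(C)\cap E(C')|$ (which gives the probability exactly) and bound the count $N_j$ via a vertex estimate on $C\cup C'$. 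Two small points: first, your displayed sum runs only over $1\le j\le k-1$ and so drops the diagonal $C=C'$, whose contribution $\sum_C(p^k-p^{2k})\le\Eb(X_k)=\Theta((np)^k)$ is dominated by the $j=1$ term for $k\ge 3$ but still needs to be included in the inequality; second, the ``off-by-one'' caveat is unnecessary, since every vertex of $C\cup C'$ lies on a cycle and hence has degree at least $2$, which gives $|V(C\cup C')|\le|E(C\cup C')|=2k-j$ exactly (this is the cleaner version of the minimum-degree argument you gesture at, and avoids invoking the tree bound at all).
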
 

We include a proof as a service to the reader.

\begin{proof}
We need to compute $\Var(X_k)=\Eb(X_k^2)-\Eb(X_k)^2$. 
As in the proof of \Cref{lem:nocyclesatall}, for a $k$-cycle $C\subseteq K_n$, we denote by $X_C$ the corresponding indicator variable. Moreover, we use $\mathcal{H}$ to denote the set of all $k$-cycles in $K_n$. By linearity of expectation, it follows that
\begin{equation}\label{eq:expectationSquared}
\Eb(X_k^2)=\sum_{C,C'\in\mathcal{H}}\Eb(X_C\cdot X_{C'})
=\sum_{C,C'\in\mathcal{H}}p^{2k-|E(C\cap C')|}\leq \sum_{C,C'\in\mathcal{H}}p^{2k-|V(C\cap C')|},
\end{equation}
where for the last inequality we use that $C\cap C'$ is a subgraph of a cycle and hence $|E(C\cap C')|\leq |V(C\cap C')|$. 
For $0\leq \ell\leq k$ we set $\mathcal{H}_\ell=\{(C,C')\in \cH^2~:~|V(C\cap C')|=\ell\}.$
If $(C,C')\in \cH_0$, the random variables $X_C$ and $X_{C'}$ are independent and we have
\begin{align*}
\sum_{(C,C')\in \cH_0}\Pb(C\cup C'\subseteq G)=&\sum_{(C,C')\in \cH_0}\Pb(C\subseteq G)\Pb( C'\subseteq G)\\
\leq&\left(\sum_{C\in \cH}\Pb(C\subseteq G)\right)\left(\sum_{C\in \cH}\Pb(C\subseteq G)\right)=\Eb(X_k)^2.
\end{align*}

For $\ell\geq 1$ a simple counting argument shows that
\begin{equation*}
    |\mathcal{H}_\ell|=\binom{n}{k}\frac{(k-1)!}{2}\binom{k}{\ell}\binom{n-k}{k-\ell}\frac{(k-1)!}{2}.
\end{equation*}
This together with \eqref{eq:expectationSquared} yields

\begingroup
\allowdisplaybreaks
\begin{align*}
    \Eb(X_k^2)&\leq \Eb(X_k)^2+\sum_{\ell=1}^k \binom{n}{k}\frac{(k-1)!}{2} \binom{k}{\ell}\binom{n-k}{k-\ell}\frac{(k-1)!}{2}p^{2k-\ell}\\
    &=\Eb(X_k)^2+\binom{n}{k}\frac{(k-1)!}{2}p^k\sum_{\ell=1}^k \binom{k}{\ell}\binom{n-k}{k-\ell}\frac{(k-1)!}{2}p^{k-\ell}\\
        &\leq \Eb(X_k)^2+\Eb(X_k)\sum_{\ell=1}^k A_{1,\ell}\binom{k}{\ell}n^{k-\ell}\frac{(k-1)!}{2}n^\ell p^k (np)^{-\ell}\\
    &\leq\Eb(X_k)^2+\Eb(X_k)\sum_{\ell=1}^k A_{2,\ell}\binom{k}{\ell}\binom{n}{k}\frac{(k-1)!}{2}p^k (np)^{-\ell}\\
    &=\Eb(X_k)^2+\Eb(X_k)\sum_{\ell=1}^k A_{2,\ell}\binom{k}{\ell}\Eb(X_k) (np)^{-\ell}\\
    &=\Eb(X_k)^2+\Eb(X_k)^2\sum_{\ell=1}^k A_{2,\ell}\binom{k}{\ell} (np)^{-\ell},
\end{align*}
\endgroup
where $A_{1,\ell},A_{2,\ell}\in \R$ are positive constants. 
If $\lim_{n\to\infty}np(n)=\infty$, then $(np)^{-\ell}\leq (np)^{-1}$ for large $n$ and hence
\begin{equation*}
\Eb(X_k^2)\leq \Eb(X_k)^2+\Eb(X_k)^2\sum_{\ell=1}^k A_{2,\ell}\binom{k}{\ell} (np)^{-1}=\Eb(X_k)^2+\Eb(X_k)^2\cdot A\cdot (np)^{-1} 
\end{equation*}
for large $n$, where $A=\sum_{\ell=1}^k A_{2,\ell}\binom{k}{\ell}$. The claim now follows from the definition of the variance.
\end{proof}

Using Chebyshev's inequality, \Cref{prop:Expectation and Variance} implies the following concentration inequalities for $X_k$.

\begin{corollary}\label{cor:concentrationCycles}
Let $p(n)$ be such that $\lim_{n\to\infty}np(n)=\infty$. For $k\in \N$, $k\geq 3$ and $A\in \R_{> 0}$ we have
\begin{equation*}
    \lim_{n\to \infty}\Pb(|X_k-\Eb(X_k)|\leq A\Eb(X_k))=1.
\end{equation*}
\end{corollary}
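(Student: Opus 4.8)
The plan is to derive the concentration inequality directly from Chebyshev's inequality, using the variance bound just established in \Cref{prop:Expectation and Variance}. Recall that Chebyshev's inequality states that for any random variable $Y$ with finite variance and any $t > 0$,
\[
    \Pb(|Y - \Eb(Y)| > t) \leq \frac{\Var(Y)}{t^2}.
\]
Applying this with $Y = X_k$ and $t = A\Eb(X_k)$ (which is positive once $n$ is large enough that $\Eb(X_k) > 0$, guaranteed since $\Eb(X_k) = \binom{n}{k}\frac{(k-1)!}{2}p^k > 0$ for all $n \geq k$), we obtain
\[
    \Pb(|X_k - \Eb(X_k)| > A\Eb(X_k)) \leq \frac{\Var(X_k)}{A^2\Eb(X_k)^2}.
\]

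Next I would substitute the bound from \Cref{prop:Expectation and Variance}, namely $\Var(X_k) \leq A'\Eb(X_k)^2 (np(n))^{-1}$ for some positive constant $A'$ and $n$ large enough. This yields
\[
    \Pb(|X_k - \Eb(X_k)| > A\Eb(X_k)) \leq \frac{A'\Eb(X_k)^2 (np(n))^{-1}}{A^2\Eb(X_k)^2} = \frac{A'}{A^2} \cdot \frac{1}{np(n)}.
\]
Since we are in the supercritical regime, $\lim_{n\to\infty} np(n) = \infty$, so the right-hand side tends to $0$ as $n \to \infty$. Taking complements, $\Pb(|X_k - \Eb(X_k)| \leq A\Eb(X_k)) \geq 1 - \frac{A'}{A^2 np(n)} \to 1$, which is the claim.

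There is essentially no obstacle here: this corollary is a routine consequence of the two preceding results (the variance estimate and Chebyshev). The only minor points to be careful about are (i) ensuring $\Eb(X_k) \neq 0$ so that dividing by it and by $t = A\Eb(X_k)$ is legitimate — this holds for all $n \geq k$ — and (ii) noting that the variance bound of \Cref{prop:Expectation and Variance} is only asserted for $n$ large enough, which is harmless since we are taking a limit as $n \to \infty$ anyway. One could also phrase the whole argument slightly more carefully by first intersecting with the event $\{n \geq k\}$, but this is not necessary for an asymptotic statement.
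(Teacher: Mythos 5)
Your proof is correct and follows exactly the route the paper indicates: apply Chebyshev's inequality with $t = A\Eb(X_k)$ and plug in the variance bound from \Cref{prop:Expectation and Variance}, then let $n\to\infty$. The paper omits the details and simply states that Chebyshev together with \Cref{prop:Expectation and Variance} implies the corollary, so your write-up is a faithful expansion of the intended argument.
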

In the following, we assume that $p(n)=n^{-\beta}$ for some $0<\beta<1$. Using \Cref{cor:concentrationCycles} we show different concentration inequalities which are more convenient for our purposes. 
\begin{lemma}\label{lem:concentrationCycles}
Let  $0<\beta <1$, $p(n)=n^{-\beta}$, $\alpha=\min(\frac{1}{2},\frac{\beta}{2-\beta})$ and $k\in \N$, $k \geq 3$. Then  for $A\in \mathbb{R}_{>0}$ large enough we have that 
\begin{equation*}
  \lim_{n\to\infty} \Pb\left(\frac{1}{2}\Eb(X_{\ell})\leq X_{\ell}\leq A \Eb(X_E)^{\lceil \ell/2\rceil-\alpha} \mbox{ for all }3\leq\ell\leq k\right)=1.
  \end{equation*}
\end{lemma}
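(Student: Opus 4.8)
The plan is to derive the desired concentration from \Cref{cor:concentrationCycles} by controlling both tails of $X_\ell$ for each $3\le \ell\le k$ simultaneously. First I would compute the order of magnitude of $\Eb(X_\ell)$: from \eqref{eq:expectation} we have $\Eb(X_\ell)=\binom{n}{\ell}\frac{(\ell-1)!}{2}p^\ell = \Theta(n^\ell p^\ell) = \Theta(n^{(1-\beta)\ell})$, while $\Eb(X_E)=\binom{n}{2}p = \Theta(n^{2-\beta})$ by \Cref{lem:edgesER}(i). So $\Eb(X_E)^{c} = \Theta(n^{(2-\beta)c})$, and the target upper bound $\Eb(X_E)^{\lceil \ell/2\rceil - \alpha}$ has order $n^{(2-\beta)(\lceil \ell/2\rceil - \alpha)}$. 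The lower-bound part $\frac12\Eb(X_\ell)\le X_\ell$ is exactly one side of \Cref{cor:concentrationCycles} with $A=\frac12$, so it holds a.a.s. for each fixed $\ell$, and hence (taking the intersection of finitely many a.a.s.\ events over $3\le\ell\le k$) a.a.s.\ for all such $\ell$ at once.

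For the upper bound, the key comparison is between the exponents $(1-\beta)\ell$ (the order of $\Eb(X_\ell)$, hence up to a constant the order of $X_\ell$ a.a.s.\ by \Cref{cor:concentrationCycles}) and $(2-\beta)(\lceil \ell/2\rceil - \alpha)$. I would verify the inequality
\[
(1-\beta)\ell \;\le\; (2-\beta)\Bigl(\Bigl\lceil \tfrac{\ell}{2}\Bigr\rceil - \alpha\Bigr)
\]
for every $\ell\ge 3$ and $0<\beta<1$, where $\alpha=\min\bigl(\tfrac12,\tfrac{\beta}{2-\beta}\bigr)$. Splitting into the even case $\ell=2m$ and the odd case $\ell=2m+1$ reduces this to elementary inequalities in $m$ and $\beta$: in the even case one needs $(1-\beta)\cdot 2m \le (2-\beta)(m-\alpha)$, i.e.\ $(2-\beta)\alpha \le \beta m$, which for $m\ge 1$ follows from $(2-\beta)\alpha\le\beta$, i.e.\ $\alpha\le \tfrac{\beta}{2-\beta}$ — true by the definition of $\alpha$. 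In the odd case $\ell=2m+1$ with $\lceil\ell/2\rceil=m+1$ one needs $(1-\beta)(2m+1)\le(2-\beta)(m+1-\alpha)$, i.e.\ $(2-\beta)\alpha \le \beta m + 1$, which holds because $(2-\beta)\alpha\le 1$ (as $\alpha\le\tfrac12$ and $2-\beta\le 2$). Hence the exponent inequality holds for all relevant $\ell$, and for those $\ell$ where it is strict we even get room to spare; in the boundary (equality) cases the constant factor hidden in $\Theta(\cdot)$ must be absorbed into a sufficiently large $A$, which is why the statement asks for $A$ large enough.

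Putting it together: choose $A$ large enough that, for every $3\le\ell\le k$, the bound $A\,\Eb(X_E)^{\lceil\ell/2\rceil-\alpha}\ge 2\,\Eb(X_\ell)$ holds for all large $n$ (possible by the exponent inequality above, treating boundary cases by enlarging $A$). Then, by \Cref{cor:concentrationCycles} applied with constant $1$, a.a.s.\ $X_\ell\le 2\,\Eb(X_\ell)\le A\,\Eb(X_E)^{\lceil\ell/2\rceil-\alpha}$, and a.a.s.\ $X_\ell\ge\tfrac12\Eb(X_\ell)$; intersecting these $2(k-2)$ a.a.s.\ events gives the claim. The main obstacle — really the only subtle point — is the bookkeeping in the exponent comparison and making sure the $\alpha=\min(\cdot,\cdot)$ is tight enough to handle both the even and odd cases; everything else is a routine application of Chebyshev via \Cref{cor:concentrationCycles} together with the finite union bound over $\ell$.
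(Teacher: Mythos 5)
Your proof is correct and follows essentially the same strategy as the paper's: compare $\Eb(X_\ell)$ against $\Eb(X_E)^{\lceil \ell/2\rceil-\alpha}$, invoke \Cref{cor:concentrationCycles} for each fixed $\ell$, and intersect the finitely many a.a.s.\ events. The only difference is cosmetic — you phrase the bound $\Eb(X_\ell) = O\bigl(\Eb(X_E)^{\lceil \ell/2\rceil-\alpha}\bigr)$ as a clean comparison of exponents of $n$ (noting $\Eb(X_\ell) \in \Theta(n^{(1-\beta)\ell})$ and $\Eb(X_E) \in \Theta(n^{2-\beta})$), whereas the paper derives it through an explicit chain of inequalities with running constants $A_1,\dots$ and uses $\Eb(X_E)\ge 1$ to pass from $\Eb(X_E)^{-\beta\ell/(2-\beta)}$ to $\Eb(X_E)^{-\alpha}$; both routes rest on the same defining property $\alpha\le\frac{\beta}{2-\beta}$.
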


\begin{proof}
We note that it suffices to show the existence of some constant $A$ satisfying the claimed statement, since then every $A'\geq A$ satisfies it as well.

By \eqref{eq:expectation} and \Cref{lem:edgesER}, for $n$ large enough, it holds that 
\begin{align*}
    \Eb(X_{2\ell})\le &A_1\left(\binom{n}{2}p\right)^\ell p^\ell=A_1\Eb(X_E)^\ell (n^{2-\beta})^{\frac{-\beta \ell}{2-\beta}}\\
   \le &A_2\Eb(X_E)^\ell \left(\binom{n}{2}p\right)^{\frac{-\beta \ell}{2-\beta}}
    \leq A_2\Eb(X_E)^\ell \cdot \Eb(X_E)^{\frac{-\beta }{2-\beta}}
    \leq A_2 \Eb(X_E)^{\ell-\alpha},
\end{align*}
where for the last two inequalities we use that $\Eb(X_E)\geq 1$ for $n$ large enough and $A_1,A_2\in \R$ are positive constants. This yields for $A\in\R_{>0}$ and $n$ large enough
\begin{equation*}
    \Pb( X_{2\ell}<  (1+A)\Eb(X_{2\ell}))\leq \Pb( X_{2\ell}< (1+A)A_2\Eb(X_E)^{\ell-\alpha}).
    \end{equation*}
Setting $A_3=(1+A)A_2$ we infer from \Cref{cor:concentrationCycles} that $\lim_{n\to\infty}\Pb( X_{2\ell}< A_3\Eb(X_E)^{\ell-\alpha})=1$. 
Since, again by \Cref{cor:concentrationCycles}, 
\begin{equation*}
    \lim_{n\to\infty}\Pb(X_{2\ell}< \frac{1}{2}\Eb(X_{2\ell}))\leq \lim_{n\to\infty}\Pb(|X_{2\ell}-\Eb(X_{2\ell})|> \frac{1}{2}\Eb(X_{2\ell}))=0,
\end{equation*}
we obtain
\begin{equation}\label{eq:evenCycles}
    \lim_{n\to\infty}\Pb(X_{2\ell}< \frac{1}{2}\Eb(X_{2\ell})\mbox{ or } X_{2\ell}> A_3\Eb(X_E)^{\ell-\alpha})=0.
\end{equation}
For odd cycles, a similar computation as for even cycles shows that for $n$ large enough
\begin{equation*}
    \Eb(X_{2\ell-1})\leq A_4\cdot \Eb(X_E)^{\ell-\frac{1}{2}}\cdot n^{-\beta\ell+\frac{1}{2}\beta}
    \leq A_4\cdot \Eb(X_E)^{\ell-\frac{1}{2}}\leq A_4\cdot \Eb(X_E)^{\ell-\alpha},
\end{equation*}
where $A_4\in \R$ is a positive constant and for the last inequality we use that $\Eb(X_E)\geq 1$ for $n$ large enough.
Almost the same argument as for even cycles implies that 
\begin{equation}\label{eq:oddcycles}
    \lim_{n\to\infty}\Pb(X_{2\ell-1}<\frac{1}{2}\Eb(X_{2\ell-1})\mbox{ or } X_{2\ell-1}>A_4 \Eb(X_E)^{\ell-\alpha})=0.
\end{equation}
Combining \eqref{eq:evenCycles} and \eqref{eq:oddcycles}  we finally get
\begin{align*}
    &\lim_{n\to\infty} \Pb\left(\frac{1}{2}\Eb(X_{\ell})\leq X_{\ell}\leq A\Eb(X_E)^{\lceil \ell/2\rceil-\alpha} \mbox{ for all }3\leq\ell\leq k\right)\\
    &\geq 1-\sum_{\ell=3}^k\lim_{n\to\infty}\Pb(X_{\ell}<\frac{1}{2}\Eb(X_{\ell})\mbox{ or } X_{\ell}> A\Eb(X_E)^{\lceil\frac{\ell}{2}\rceil-\alpha})=1,
\end{align*}
    where $A$ is taken as the maximal constant appearing in \eqref{eq:evenCycles} and \eqref{eq:oddcycles} for $3\leq \ell\leq k$.
\end{proof}

To get information about the $\gamma$-vector of the symmetric edge polytope of a random graph $G\in G(n,p)$ we want to use \Cref{lem: nonfaces of Delta<}. The first part of our strategy consists in turning the concentration inequalities of \Cref{lem:concentrationCycles} into concentration inequalities for the number of non-faces and faces of bounded cardinality. In a second step, we use the latter to infer concentration inequalities for the $\gamma$-vector up to a fixed entry. We now make this idea more precise. 
Given a graph $G$ on $n$ vertices, we let $\Delta_G$ be a unimodular triangulation of $\partial \Pc_G$ as described in \Cref{lem: nonfaces of Delta<}. Since $\Pc_G$ is reflexive and $\Delta_G$ is unimodular, we have $h_j^\ast(\Pc_G)=h_j(\Delta_G)$ for every $j$. Using the symmetry of $h(\Delta_G)$ and the definition of the $\gamma$-vector we further know that
\[
\sum_{i=0}^{\lfloor \frac{\dim\Pc_G}{2}\rfloor} \gamma_i(\Pc_G) t^i(t+1)^{\dim\Pc_G-2i}=
\sum_{j=0}^{\dim\Pc_G} h^\ast_j(\Pc_G) t^{\dim\Pc_G-j}.
\]
The usual relation between the $f$- and $h$-vector of a simplicial complex together with the substitution of $t$ by $t+1$ implies 

\begin{equation*}
\sum_{j=0}^{\dim\Pc_G} f_{j-1}(\Delta_G) t^{\dim\Pc_G-j} = \sum_{i=0}^{\lfloor \frac{\dim\Pc_G}{2}\rfloor} \gamma_i(\Pc_G)(t+1)^i(t+2)^{\dim\Pc_G-2i}.
\end{equation*}
In particular, evaluating the coefficient of $t^{\dim\Pc_G-k}$ (and truncating the right hand side at $i=k$) yields that
\begin{equation} \label{eq:gamma_faces}
\gamma_k(\Pc_G) = f_{k-1}(\Delta_G) - [t^{\dim\Pc_G-k}] \sum_{i=0}^{k-1} \gamma_i(\Pc_G)(t+1)^i(t+2)^{\dim\Pc_G-2i}.
\end{equation}
In order to be able to make sense out of \eqref{eq:gamma_faces}, we need to know what the dimension of $\Pc_G$ is for ``most'' Erd\H{o}s-R\'enyi graphs $G\in G(n,p)$ in the supercritical regime. Denoting by $X_{\dim\Pc}$ the corresponding random variable, we have:

\begin{lemma}\label{dim:Sep}
Let $0<\beta < 1$, $p(n)=n^{-\beta}$. Then, for $G\in G(n,p)$, we have
\begin{equation*}
    \lim_{n\to \infty}\Pb(X_{\dim\Pc}=n-1)=1.
\end{equation*}
\end{lemma}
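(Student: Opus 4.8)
The plan is to reduce the statement to the classical fact that Erd\H{o}s-R\'enyi graphs well above the connectivity threshold are asymptotically almost surely connected. Concretely, I would first recall that the dimension of $\Pc_G$ is controlled by the number of connected components of $G$: since $\Pc_G$ is centrally symmetric it contains the origin, so its affine hull coincides with the linear span $W_G := \mathrm{span}_{\R}(e_i - e_j \ :\ ij \in E)$, and a standard rank computation for the vertex--edge incidence data gives $\dim_{\R} W_G = n - c(G)$, where $c(G)$ is the number of connected components of $G$ on the vertex set $[n]$ (isolated vertices included). Hence $\dim \Pc_G = n - 1$ precisely when $G$ is connected, and it suffices to prove that a.a.s. $G \in G(n, n^{-\beta})$ is connected.

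For the latter I would use a union bound over the size of a smallest component: if $G$ is disconnected there is a set $S \subseteq [n]$ with $1 \le |S| \le \lfloor n/2 \rfloor$ and no edge of $G$ between $S$ and $[n] \setminus S$, an event of probability $(1-p)^{k(n-k)}$ for a fixed $S$ with $|S| = k$. Using $1 - p \le e^{-p}$, $\binom{n}{k} \le n^k$ and $k(n-k) \ge kn/2$ for $k \le n/2$, we get
\begin{align*}
    \Pb(G \text{ disconnected}) &\le \sum_{k=1}^{\lfloor n/2 \rfloor} \binom{n}{k} (1-p)^{k(n-k)} \\
    &\le \sum_{k=1}^{\lfloor n/2 \rfloor} n^k e^{-pk(n-k)} \le \sum_{k \ge 1} \left( n e^{-pn/2} \right)^k .
\end{align*}
Since $p(n) = n^{-\beta}$ with $\beta < 1$, we have $pn = n^{1-\beta} \to \infty$, so $n e^{-pn/2} = \exp(\ln n - n^{1-\beta}/2) \to 0$; for $n$ large the geometric series is then at most $2 n e^{-pn/2} \to 0$, giving $\lim_{n\to\infty} \Pb(X_{\dim \Pc} = n-1) = \lim_{n\to\infty}\Pb(G \text{ connected}) = 1$.

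I do not expect any genuine obstacle here: this is essentially the classical Erd\H{o}s-R\'enyi connectivity theorem, whose sharp threshold $(\ln n)/n$ lies far below $n^{-\beta}$ for every $\beta \in (0,1)$, so one could equally well just cite it (e.g.\ \cite{book-RandomGraphs}) together with the dimension formula above. The only slightly non-routine point is the reduction step --- identifying $\dim \Pc_G$ with $n - c(G)$ --- which is standard for symmetric edge polytopes and follows from the description of their vertices.
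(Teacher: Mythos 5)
Your reduction to the connectivity of $G$ via the identity $\dim\Pc_G = n - c(G)$ is exactly the paper's argument; the only difference is that where the paper then cites the classical connectivity theorem for Erd\H{o}s--R\'enyi graphs (noting $n^{-\beta} \gg (\log n)/n$), you supply a self-contained union-bound proof of that fact. Both steps in your version are correct, and you even note that citing the reference is an equally valid alternative, so the two proofs are essentially the same modulo this choice.
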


\begin{proof}
Since for any graph $G$ on $n$ vertices $\dim\Pc_G=n-1$ if and only if $G$ is connected, the result follows e.g.~from \cite[Theorem 4.1]{book-RandomGraphs} using the fact that $n^{-\beta}$ grows faster than $\frac{\log(n)}{n}$ for any $0<\beta<1$.
\end{proof}

In order to use \eqref{eq:gamma_faces} to show concentration inequalities for the $\gamma$-vector of $\Pc_G$, we need to study the random variables $f_{k-1}$ or equivalently the number of non-faces of $\Delta_G$. For $G\in G(n,p)$ we denote by $n_{k-1}(G)$ the number of $(k-1)$-dimensional non-faces of $\Delta_G$ that do not contain antipodal vertices. Note that $n_1(G)$ equals the number of bad pairs of $G$ as in \Cref{sec:nonnegativity}. 

\begin{theorem}\label{thm:concentration Nonfaces}
Let $0<\beta < 1$, $p(n)=n^{-\beta}$, $\alpha=\min(\frac{1}{2},\frac{\beta}{2-\beta})$ and $k\in \N$, $k \geq 1$. Then for $B\in \R_{>0}$ large enough  we have
\begin{equation*}
    \lim_{n\to\infty}\Pb(n_{\ell-1}\leq B\Eb(X_E)^{\ell-\alpha} \mbox{ for all }2\leq \ell\leq k+1)=1.
    \end{equation*}
\end{theorem}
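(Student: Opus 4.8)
The plan is to bound $n_{\ell-1}$ from above by a polynomial expression in the cycle-counts $X_3,\dots,X_{2\ell}$ and then invoke \Cref{lem:concentrationCycles} to control the latter. First I would recall from \Cref{lem: nonfaces of Delta<} that every minimal non-face of $\Delta_G$ not containing an antipodal pair is an $\ell$-element subset of an oriented cycle of length $2\ell-1$ or $2\ell$; consequently every $(\ell-1)$-dimensional non-face (of the relevant type) \emph{contains} such a minimal non-face, i.e.\ contains $\ell$ oriented edges forming a consistently oriented sub-path of a cycle of length at most $2\ell$. The strategy is therefore: count, for each $j$ with $3\le j\le 2\ell$, the oriented $j$-cycles of $G$ (there are at most $2X_j$ of them), observe that each such cycle contains at most a bounded number (depending only on $j$, hence only on $\ell$) of oriented $\ell$-subsets that are minimal non-faces, and then bound how many $(\ell-1)$-faces of the ambient cross-polytope can contain a given fixed minimal non-face. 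Since $\Delta_G$ lives on $2|E|$ vertices and a $(\ell-1)$-dimensional face has $\ell$ vertices, once an $\ell'$-element minimal non-face is fixed (with $\ell'\le\ell$) the number of ways to complete it to an $(\ell-1)$-face is at most $\binom{2|E|}{\ell-\ell'}\le \binom{2|E|}{\ell-1}$, which is $O(|E|^{\ell-1})=O(\Eb(X_E)^{\ell-1})$ up to a constant depending only on $\ell$.

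\textbf{Key steps.} Combining these observations gives a deterministic inequality of the shape
\begin{equation*}
    n_{\ell-1}(G) \;\le\; C_\ell \cdot |E(G)|^{\ell-1} \cdot \sum_{j=3}^{2\ell} X_j(G)
\end{equation*}
for some constant $C_\ell$ depending only on $\ell$ (one has to be slightly careful to also absorb the contributions of non-faces which contain \emph{two or more disjoint} minimal non-faces coming from short cycles, but these are dominated by the same kind of bound since they force even more of the $\ell$ vertices to be constrained). Conditioning on the high-probability event from \Cref{lem:concentrationCycles} that $X_j\le A\,\Eb(X_E)^{\lceil j/2\rceil-\alpha}$ for all $3\le j\le 2\ell$, and using that the largest such term is $\Eb(X_E)^{\ell-\alpha}$ (attained at $j=2\ell$ or $j=2\ell-1$), we get
\begin{equation*}
    n_{\ell-1}(G)\;\le\; C_\ell \cdot \Eb(X_E)^{\ell-1}\cdot (2\ell-2)\,A\,\Eb(X_E)^{\ell-\alpha}
\end{equation*}
on that event. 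This is of the form $B'\,\Eb(X_E)^{2\ell-1-\alpha}$, which is \emph{not} yet the claimed bound $B\,\Eb(X_E)^{\ell-\alpha}$ — so the naive count is too lossy, and the factor $|E|^{\ell-1}$ must be avoided. The fix is to organize the count not by "fix a short cycle, then complete arbitrarily" but by noting that an $(\ell-1)$-dimensional non-face of the cross-polytope supported on $\ell$ distinct unoriented edges which is a non-face of $\Delta_G$ must have \emph{all} of its $\ell$ oriented edges contained in short cycles in a tightly linked way; more precisely every such non-face contains an oriented $\ell'$-subset ($\ell'\le\ell$) of a single cycle of length $\le 2\ell$, and the remaining $\ell-\ell'$ vertices can be chosen among the at most $2|E|$ vertices, but one then re-sums over all non-faces and uses that the \emph{total} number of faces $f_{\ell-1}(\Delta_G)$ of a unimodular triangulation on $2|E|$ vertices which is a sphere of dimension $n-2$ grows like $\Eb(X_E)^{\ell}$ only when $G$ has many cycles — i.e.\ one should prove the bound by induction on $\ell$, writing a $(\ell-1)$-non-face as a $(\ell-2)$-face plus one more vertex subject to a cycle constraint, so that $n_{\ell-1}\le c_\ell\,n_{\ell-2}\cdot(\text{\#short cycles through a fixed edge})+\dots$; this is exactly the recursive bookkeeping the theorem is set up for.

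\textbf{Main obstacle.} The delicate point, and where I expect the real work to be, is precisely this: getting the \emph{correct} exponent $\ell-\alpha$ rather than something like $2\ell-1-\alpha$. The crude "cycle $\times$ free completion" count overshoots, so one needs the sharper structural fact that a non-antipodal $(\ell-1)$-non-face of $\Delta_G$ is essentially determined by a bounded-length cycle together with only $O(1)$ extra choices, not $|E|^{\Theta(\ell)}$ extra choices — in other words, that its vertices must all be "used up" by short cycles. Establishing this requires a careful case analysis of how the $\ell$ oriented edges of such a non-face sit inside oriented $3$- and $4$-cycles (for the $\ell=2$ case this is exactly the bad-pair analysis of \Cref{sec:nonnegativity}), and then an inductive reduction to smaller $\ell$. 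Once that deterministic combinatorial bound $n_{\ell-1}(G)\le C_\ell\sum_{j=3}^{2\ell}X_j(G)$ (\emph{without} the extra $|E|$ factors) is in hand, the probabilistic conclusion is immediate: intersect over $2\le\ell\le k+1$ the high-probability events of \Cref{lem:concentrationCycles}, observe the dominant term is $\Eb(X_E)^{\ell-\alpha}$, set $B$ to be $C_{\ell}\cdot(2\ell-2)A$ maximized over the finitely many $\ell\le k+1$, and take $n\to\infty$.
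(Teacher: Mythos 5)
Your opening decomposition — write each $(\ell-1)$-non-face as a minimal non-face plus a free completion, bound the minimal non-faces by short-cycle counts, and invoke \Cref{lem:concentrationCycles} — is exactly the paper's argument. But the ``main obstacle'' you then run into is self-inflicted, and the remedy you sketch would not work.

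The error is in the bookkeeping. For a minimal non-face of size $\ell'$, you bound the number of completions by the worst case $\binom{2|E|}{\ell-1}$, dropping the $\ell'$-dependence; separately, you bound the cycle contribution by its largest term $O\bigl(\Eb(X_E)^{\ell-\alpha}\bigr)$, attained around $j\approx 2\ell$. These two maxima live at opposite ends of the $\ell'$-range: the completion factor $|E|^{\ell-\ell'}$ is largest when $\ell'$ is small, while on the concentration event the minimal non-face count $O(X_{2\ell'-1}+X_{2\ell'})=O\bigl(\Eb(X_E)^{\ell'-\alpha}\bigr)$ is largest when $\ell'$ is large. Decoupling the two maxima produces an exponent near $2\ell-1-\alpha$; pairing them term by term over $\ell'$, the exponents always add up to $(\ell-\ell')+(\ell'-\alpha)=\ell-\alpha$, \emph{independently of $\ell'$}. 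The ``crude cycle $\times$ free completion'' count you dismissed as too lossy is, when done carefully, tight. You were in fact one sentence away from this when you wrote ``the remaining $\ell-\ell'$ vertices can be chosen among the at most $2|E|$ vertices, but one then re-sums over all non-faces'' — the re-summation over $\ell'$ is all that is needed.

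This is precisely the paper's computation. It writes
\[
  n_{\ell-1}(G)\ \le\ \sum_{r=1}^{\ell-1}\binom{X_E(G)-(r+1)}{\ell-1-r}\,2^{\ell-1-r}\,N_r(G),
\]
where $N_r(G)$ is the number of minimal $r$-dimensional non-faces; it bounds $N_{r-1}(G)\le 2\binom{2r-1}{r}\bigl(X_{2r}(G)+X_{2r-1}(G)\bigr)$ via \Cref{lem: nonfaces of Delta<}; and then on the intersection of the high-probability events of \Cref{lem:concentrationCycles} and \Cref{lem:edgesER}(iii), the $r$-th summand is $O\bigl(\Eb(X_E)^{\ell-r}\cdot\Eb(X_E)^{r-\alpha}\bigr)=O\bigl(\Eb(X_E)^{\ell-\alpha}\bigr)$, uniformly in $r$. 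Summing over the boundedly many values of $r$ and over $2\le\ell\le k+1$ yields the claim with an explicit constant $B$.

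Consequently, the fixes you propose are aimed at the wrong target. The ``structural rigidity'' hypothesis — that a non-antipodal $(\ell-1)$-non-face is determined up to $O(1)$ choices by a short cycle, so that a bound $n_{\ell-1}\le C_\ell\sum_j X_j$ without any $|E|$-power would hold — is false: the completion part really is unconstrained, and a factor $|E|^{\ell-r}$ genuinely appears; it is harmless only because the minimal non-face that forces a cycle is correspondingly small. Likewise, no induction on $\ell$ is used here (the paper's induction appears later, in the concentration bound for $\gamma_\ell$ itself), and the sphere-counting detour is not part of the argument.
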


\begin{proof}
As in the proof of \Cref{lem:concentrationCycles}, it suffices to show the existence of some constant $B$ satisfying the claimed statement.

Let $G\in G(n,p)$ and let $2\leq \ell\leq k+1$. On the one hand, any $(\ell-1)$-non-face of $\Delta_G$ contains a minimal (not necessarily unique) $r$-non-face of $\Delta_G$ for some $1\leq r\leq \ell-1$. On the other hand, any such minimal $r$-non-face of $\Delta_G$ can be extended to an $(\ell-1)$-non-face of $\Delta_G$ by adding $\ell-1-r$ non-antipodal vertices to it, for which there are $\binom{X_E(G)-(r+1)}{\ell-1-r}\cdot 2^{\ell-1-r}$ possibilities. Hence, denoting by $N_r(G)$ the number of minimal $r$-non-faces of $\Delta_G$, we conclude
\begin{equation*}
    n_{\ell-1}(G)\leq \sum_{r=1}^{\ell-1}\binom{X_E(G)-(r+1)}{\ell-1-r}\cdot 2^{\ell-1-r}\cdot N_r(G)
    \le\sum_{r=2}^\ell B_{r,\ell} X_E(G)^{\ell-r}N_{r-1}(G),
\end{equation*}
where $B_{r,\ell}\in\R$ are positive constants.
Let $A_1\in \R$ such that \Cref{lem:concentrationCycles} holds. As, by \Cref{lem: nonfaces of Delta<}, for $G\in G(n,p)$ we have 
$N_{r-1}(G)\leq 2\cdot\binom{2r-1}{r}(X_{2r}(G)+X_{2r-1}(G))$, it follows that 
\begin{equation*}
    \Pb\left(N_{r-1}\leq 4\cdot \binom{2r-1}{r}\cdot A_1 \cdot \Eb(X_E)^{r-\alpha}\right)\geq \Pb\left(X_{2r}\leq A_1\Eb(X_E)^{r-\alpha} \mbox{ and } X_{2r-1}\leq A_1\Eb(X_E)^{r-\alpha}\right)
    \end{equation*}
    and by the choice of $A_1$ we have $\lim_{n\to\infty} \Pb(N_{r-1}\leq 4\cdot \binom{2r-1}{r}\cdot A_1 \cdot \Eb(X_E)^{r-\alpha})=1$. As, by \Cref{lem:edgesER} (iii), we also have $\lim_{n\to \infty}\Pb(X_E\leq (1+A_2)\Eb(X_E))=1$ for any $A_2\in \R_{>0}$, we conclude that a.a.s. it holds that
\begin{align*}
    n_{\ell-1}\leq &\sum_{r=2}^\ell B_{r,\ell} (1+A_2)^{\ell-r}\Eb(X_E)^{\ell-r}\cdot4\cdot \binom{2r-1}{r}\cdot A_1 \cdot \Eb(X_E)^{r-\alpha}\\
    =&\left(\sum_{r=2}^\ell B_{r,\ell} (1+A_2)^{\ell-r}\cdot4\cdot \binom{2r-1}{r}\cdot A_1 \right) \Eb(X_E)^{\ell-\alpha}=B\cdot\Eb(X_E)^{\ell-\alpha}
\end{align*}
with $B=\sum_{r=2}^\ell B_{r,\ell} (1+A_2)^{\ell-r}\cdot4\cdot \binom{2r-1}{r}\cdot A_1$. The claim follows. 
\end{proof}

For $G\in G(n,p)$ we denote by $f_{k-1}(G)$ the number of $(k-1)$-faces of $\Delta_G$. 
From \Cref{thm:concentration Nonfaces} we can deduce concentration inequalities for these random variables.

\begin{theorem}\label{thm:ConcenctrationInequalitiesFaces}
Let $0<\beta < 1$, $p(n)=n^{-\beta}$, $\alpha=\min(\frac{1}{2},\frac{\beta}{2-\beta})$ and $k\in \N$, $k \geq 1$. Then, for $\epsilon>0$ and $B\in\R_{>0}$ large enough, we have
\begin{equation*}
    \lim_{n\to\infty}\Pb\left(2^{\ell-\epsilon}\binom{\Eb(X_E)}{\ell}-B\Eb(X_E)^{\ell-\alpha}\leq  f_{\ell-1}\leq 2^{\ell+\epsilon}\binom{\Eb(X_E)}{\ell} \mbox{ for all } 1\leq \ell\leq k\right)=1.
    \end{equation*}
In particular,
\begin{equation}\label{eq:FacesAsymptotics}
\lim_{n\to\infty}\Pb\left(f_{\ell-1}\in \Theta(n^{(2-\beta)\ell}) \mbox{ for all } 1\leq \ell\leq k\right)=1.
\end{equation}
\end{theorem}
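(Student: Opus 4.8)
The plan is to reduce the statement to the two random variables that are already under control: the number $X_E$ of edges of $G$, and the number $n_{\ell-1}$ of non-antipodal $(\ell-1)$-non-faces of $\Delta_G$. By \Cref{lem: nonfaces of Delta<} no face of $\Delta_G$ contains an antipodal pair, so every $\ell$-element subset of the vertex set of $\Delta_G$ avoiding antipodal pairs is either a face of $\Delta_G$ or one of the subsets counted by $n_{\ell-1}$; since such a subset is specified by choosing $\ell$ of the $X_E(G)$ edges of $G$ and then orienting each of them, we obtain the exact identity
\[
f_{\ell-1}(G)=2^{\ell}\binom{X_E(G)}{\ell}-n_{\ell-1}(G),
\]
with the convention $n_0(G)=0$ (every vertex of $\Delta_G$ is a face). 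Thus the upper bound in the statement will follow by discarding the term $n_{\ell-1}$, the lower bound by inserting the estimate of \Cref{thm:concentration Nonfaces}, and the only remaining work is to pin down the size of $2^{\ell}\binom{X_E}{\ell}$.

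First I would sharpen the concentration of $X_E$ provided by \Cref{lem:edgesER}. Chebyshev's inequality together with the formulas for $\Eb(X_E)=\binom{n}{2}p(n)$ and $\Var(X_E)$ gives, for every fixed $\delta>0$, that $\Pb(|X_E-\Eb(X_E)|>\delta\Eb(X_E))\le \Var(X_E)/(\delta^2\Eb(X_E)^2)=O(n^{\beta-2})\to 0$, so for every fixed $\delta>0$ we have a.a.s.\ that $(1-\delta)\Eb(X_E)\le X_E\le (1+\delta)\Eb(X_E)$. For fixed $\ell$ one has $\binom{m}{\ell}=(1+O(1/m))\,m^{\ell}/\ell!$, and since $\Eb(X_E)\to\infty$, on the above event $\binom{X_E}{\ell}/\binom{\Eb(X_E)}{\ell}=(1+o(1))\,(X_E/\Eb(X_E))^{\ell}$, which lies in $[(1-\delta)^{\ell},(1+\delta)^{\ell}]$ up to a $(1+o(1))$ factor. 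Given $\epsilon>0$ and $k$, I would then choose $\delta>0$ small enough that $(1+\delta)^{k}\le 2^{\epsilon/2}$ and $(1-\delta)^{-k}\le 2^{\epsilon/2}$, which yields that a.a.s., simultaneously for all $1\le\ell\le k$,
\[
2^{-\epsilon}\binom{\Eb(X_E)}{\ell}\le\binom{X_E}{\ell}\le 2^{\epsilon}\binom{\Eb(X_E)}{\ell}.
\]

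The proof is then finished by intersecting finitely many a.a.s.\ events. On the event just displayed we have $f_{\ell-1}\le 2^{\ell}\binom{X_E}{\ell}\le 2^{\ell+\epsilon}\binom{\Eb(X_E)}{\ell}$, the desired upper bound. Intersecting additionally with the conclusion of \Cref{thm:concentration Nonfaces} applied with the same $k$ (choosing $B$ at least as large as the constant appearing there, and using $n_0=0$), a.a.s.\ $n_{\ell-1}\le B\Eb(X_E)^{\ell-\alpha}$ for all $1\le\ell\le k$, whence $f_{\ell-1}=2^{\ell}\binom{X_E}{\ell}-n_{\ell-1}\ge 2^{\ell-\epsilon}\binom{\Eb(X_E)}{\ell}-B\Eb(X_E)^{\ell-\alpha}$, the desired lower bound. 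For the ``In particular'' assertion I would use $\Eb(X_E)=\Theta(n^{2-\beta})$, so that $2^{\ell\pm\epsilon}\binom{\Eb(X_E)}{\ell}=\Theta(n^{(2-\beta)\ell})$ while $B\Eb(X_E)^{\ell-\alpha}=\Theta(n^{(2-\beta)(\ell-\alpha)})=o(n^{(2-\beta)\ell})$ since $\alpha>0$; hence both bounds sandwiching $f_{\ell-1}$ are $\Theta(n^{(2-\beta)\ell})$. The one delicate point is upgrading \Cref{lem:edgesER}(iii) from the additive statement $|X_E-\Eb(X_E)|\le A\Eb(X_E)$ to the multiplicative one $X_E=(1+o(1))\Eb(X_E)$: this is exactly what forces the leading constant in front of $\binom{\Eb(X_E)}{\ell}$ to be $2^{\ell\pm\epsilon}$ rather than a mere $\Theta(1)$; everything else is bookkeeping around the displayed identity.
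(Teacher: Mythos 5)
Your proof is correct and follows essentially the same route as the paper: both start from the exact identity $f_{\ell-1}=2^{\ell}\binom{X_E}{\ell}-n_{\ell-1}$, control $\binom{X_E}{\ell}$ via the multiplicative concentration of $X_E$ from Lemma~\ref{lem:edgesER}(iii), and control $n_{\ell-1}$ via Theorem~\ref{thm:concentration Nonfaces}, then intersect the finitely many a.a.s.\ events. Your version spells out more carefully how the concentration of $X_E$ translates into the $2^{\pm\epsilon}$ factors uniformly over $\ell\le k$, but the decomposition, the key lemmas invoked, and the asymptotic bookkeeping for the ``in particular'' statement all match the paper's argument.
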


\begin{proof}
The statement trivially holds for $k=1$ since $f_0=2X_E$. Let $k\geq 2$ and $1 \leq \ell \leq k$.
For $G\in G(n,p)$ we have
\begin{equation*}
    f_{\ell-1}(G)= 2^\ell\binom{X_E(G)}{\ell}-n_{\ell-1}(G).
\end{equation*} 
Let $0<A_1<1$ be such that $\binom{(1-A_1)\Eb(X_E)}{\ell}=2^{-\epsilon}\binom{\Eb(X_E)}{\ell}$.  

It follows from \Cref{thm:concentration Nonfaces} and \Cref{lem:edgesER} (iii) that for large enough $B\in \R_{>0}$
\begin{equation}\label{eq:lower bound}
  \lim_{n\to\infty}\Pb\left(  f_{\ell-1}\geq 2^{\ell-\epsilon}\binom{\Eb(X_E)}{\ell}-B\Eb(X_E)^{\ell-\alpha}\right)=1.
\end{equation}
Finally, let $A_2>0$ be such that $\binom{(1+A_2)\Eb(X_E)}{\ell}=2^{\epsilon}\binom{\Eb(X_E)}{\ell}$. As for $G\in G(n,p)$ the triangulation $\Delta_G$ is a subcomplex of a cross-polytope of dimension $X_E(G)$, we can bound $f_{\ell-1}(G)$ from above by $2^\ell\binom{X_E(G)}{\ell}$. Using \Cref{lem:edgesER} (iii) we conclude that 
\begin{equation}\label{eq:upper bound}
   \lim_{n\to\infty} \Pb\left(f_{\ell-1}\leq 2^{\ell+\epsilon}\binom{\Eb(X_E)}{\ell}\right)=1.
\end{equation}
Combining \eqref{eq:lower bound}  and \eqref{eq:upper bound} for any $1\leq \ell\leq k$ finishes the proof of the first statement. 

For the ``In particular''-part it suffices to note that, since $\Eb(X_E)\in \Theta(n^{2-\beta})$, the upper and lower bounds for $f_{\ell-1}$ both lie in $\Theta(n^{(2-\beta)\ell})$.
\end{proof}

We are now ready to state the main result of this subsection.
\begin{theorem}\label{thm:gammaconcentration1}
Let $0<\beta < 1$, $p(n)=n^{-\beta}$ and $k\in \N$. Then
\begin{equation*}
    \lim_{n\to\infty}\Pb(\gamma_\ell\in \Theta(n^{(2-\beta)\ell}) \mbox{ for all } 0\leq \ell\leq k)=1.
\end{equation*}
\end{theorem}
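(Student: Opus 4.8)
The plan is to prove this by induction on $\ell$, using the recursion \eqref{eq:gamma_faces} to express $\gamma_\ell(\Pc_G)$ in terms of $f_{\ell-1}(\Delta_G)$ and the lower-order coefficients $\gamma_0(\Pc_G),\ldots,\gamma_{\ell-1}(\Pc_G)$, all of which we will have already controlled asymptotically. Throughout, we work on the high-probability event, call it $\mathcal{E}_n$, on which simultaneously: $\dim\Pc_G = n-1$ (by \Cref{dim:Sep}); the face-count concentration of \Cref{thm:ConcenctrationInequalitiesFaces} holds, i.e.\ $f_{\ell-1}(\Delta_G)\in\Theta(n^{(2-\beta)\ell})$ for all $1\leq\ell\leq k$; the nonface bound of \Cref{thm:concentration Nonfaces} holds; and $X_E\in\Theta(n^{2-\beta})$ (by \Cref{lem:edgesER}(iii)). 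Since each of these holds a.a.s., so does their intersection $\mathcal{E}_n$, and it suffices to show that on $\mathcal{E}_n$ one has $\gamma_\ell(\Pc_G)\in\Theta(n^{(2-\beta)\ell})$ for all $0\leq\ell\leq k$.

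The base cases $\ell=0$ and $\ell=1$ are immediate: $\gamma_0(\Pc_G)=1\in\Theta(1)=\Theta(n^0)$, and by \Cref{lem: gamma1-gamma2 for G} (together with \Cref{lem: reduce to 2-connected} or the product formula for $h^*$-polynomials across connected components, noting $G$ is connected on $\mathcal{E}_n$) we have $\gamma_1(\Pc_G)=2\cy(G)=2(X_E-n+1)$, which on $\mathcal{E}_n$ is $2X_E - 2n + 2 \in \Theta(n^{2-\beta})$ since $2-\beta>1$. For the inductive step, fix $2\leq\ell\leq k$ and assume $\gamma_i(\Pc_G)\in\Theta(n^{(2-\beta)i})$ for all $0\leq i<\ell$. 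By \eqref{eq:gamma_faces},
\begin{equation*}
\gamma_\ell(\Pc_G) = f_{\ell-1}(\Delta_G) - [t^{\dim\Pc_G-\ell}] \sum_{i=0}^{\ell-1} \gamma_i(\Pc_G)(t+1)^i(t+2)^{\dim\Pc_G-2i}.
\end{equation*}
The extracted coefficient is, for each $i$, the product of $\gamma_i(\Pc_G)$ with a binomial-type quantity $\sum_{a+b=\ell-i}\binom{i}{a}\binom{\dim\Pc_G-2i}{b}2^{\dim\Pc_G-2i-b}\big/2^{\dim\Pc_G-2i}$ — more simply, $[t^{D-\ell}](t+1)^i(t+2)^{D-2i}$ with $D=\dim\Pc_G=n-1$ equals a polynomial in $n$ of degree exactly $\ell-i$ (coming from choosing $\ell-i$ of the $D-2i$ factors $(t+2)$ to contribute $t$, all other choices being lower order), hence lies in $\Theta(n^{\ell-i})$. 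Therefore the $i$-th summand is in $\Theta(n^{(2-\beta)i}\cdot n^{\ell-i}) = \Theta(n^{(2-\beta)i + \ell - i})$, and since $2-\beta<2$, i.e.\ $(2-\beta)i+\ell-i = \ell + (1-\beta)i < \ell + (1-\beta)\ell = (2-\beta)\ell$ for $i<\ell$, every correction term is $o(n^{(2-\beta)\ell})$. (One should also check the terms are nonnegative, or at least bounded in absolute value by their leading order, which holds since $\gamma_i(\Pc_G)\geq 0$ on $\mathcal{E}_n$ for $i\le 2$ by \Cref{thm: gamma2 >0}; for the purpose of the $\Theta$-estimate only the order of magnitude matters and the binomial coefficients are positive.) Meanwhile $f_{\ell-1}(\Delta_G)\in\Theta(n^{(2-\beta)\ell})$ on $\mathcal{E}_n$. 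Subtracting a lower-order quantity from a $\Theta(n^{(2-\beta)\ell})$ quantity leaves something in $\Theta(n^{(2-\beta)\ell})$, completing the induction.

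The main obstacle — and the point requiring genuine care rather than bookkeeping — is controlling the \emph{lower} bound in the correction: a priori the corrections are subtracted, so one must be sure the leading term $f_{\ell-1}(\Delta_G)$ is not cancelled. This is where the strict inequality $2-\beta<2$ (equivalently $\beta>0$) is essential: it guarantees the exponent $\ell+(1-\beta)i$ of each correction term is strictly smaller than $(2-\beta)\ell$, so for $n$ large the total correction is bounded by, say, $\tfrac12 f_{\ell-1}(\Delta_G)$, and $\gamma_\ell(\Pc_G)\geq \tfrac12 f_{\ell-1}(\Delta_G)$ on $\mathcal{E}_n$. (Note the argument genuinely uses $\beta\neq 1$ only through $\beta>0$ here, together with the supercriticality $\lim np(n)=\infty$, i.e.\ $\beta<1$, entering via \Cref{thm:ConcenctrationInequalitiesFaces} and \Cref{dim:Sep}.) A secondary, purely notational subtlety is that the implied constants in all the $\Theta$-statements depend on $\ell$, hence on $k$, but since $k$ is fixed and the union is over finitely many $\ell$, this is harmless; one simply fixes $B$ and the event $\mathcal{E}_n$ once, large enough to serve all $\ell\leq k$ simultaneously, exactly as in the proofs of \Cref{thm:concentration Nonfaces} and \Cref{thm:ConcenctrationInequalitiesFaces}.
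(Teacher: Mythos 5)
Your proposal is correct and follows essentially the same route as the paper: induction on the index, the recursion \eqref{eq:gamma_faces}, the face-count concentration of \Cref{thm:ConcenctrationInequalitiesFaces}, $\dim\Pc_G=n-1$ a.a.s., and the exponent comparison $(2-\beta)i+(\ell-i)<(2-\beta)\ell$ (equivalently $(\ell-i)(1-\beta)>0$) to show the correction terms are of lower order. The only (cosmetic) difference is that you bound the correction using the inductive $\Theta$-control on $\gamma_i$ directly, whereas the paper first extracts from the induction hypothesis only the weaker fact $\gamma_i\geq 0$, deduces $\gamma_i\leq f_{i-1}$ from \eqref{eq:gamma_faces}, and then feeds the explicit concentration bounds for $f_{i-1}$ into the correction term; the resulting estimates coincide.
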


\begin{proof}
We show the statement by induction on $k$. Since $\gamma_0=1$ is constant, the statement holds for $k=0$. 

Now assume $k\geq 1$. Since by the induction hypothesis we have
\begin{equation}\label{eq:nonnegativity}
    \lim_{n\to\infty }\Pb(\gamma_\ell\geq 0 \mbox{ for all } 0\leq \ell\leq k-1)=1,
\end{equation}
it follows from \eqref{eq:gamma_faces} that
\begin{equation}\label{eq:upperBound}
\lim_{n\to\infty}\Pb(\gamma_\ell\leq f_{\ell-1}\mbox{ for all } 0\leq \ell\leq k)=1.
\end{equation}
Thus, we have an upper bound for $\gamma_\ell$, which a.a.s. lies in $\Theta(n^{(2-\beta)\ell})$ by \eqref{eq:FacesAsymptotics}. Combining this upper bound with a more detailed analysis of \eqref{eq:gamma_faces} will enable us to prove that $\gamma_\ell$ can be bounded asymptotically always surely by a lower bound that also lies in $\Theta(n^{(2-\beta)\ell})$. \Cref{dim:Sep} implies that $X_{\dim\Pc}=n-1$ a.a.s.; hence, by \eqref{eq:upperBound} and \eqref{eq:gamma_faces} we have a.a.s. 
\begin{align}\label{eq:gammaThroughF}
   \gamma_\ell\geq& f_{\ell-1}-[t^{n-1-\ell}]\sum_{i=0}^{\ell-1} f_{i-1}(t+1)^i(t+2)^{n-1-2i}\\ \notag
   =& f_{\ell-1}-\sum_{i=0}^{\ell-1} f_{i-1}\left(\sum_{j=n-1-\ell-i}^{n-1-\ell}2^{n-1-2i-j}\binom{i}{n-1-\ell-j}\binom{n-1-2i}{j}\right). 
\end{align}
Using \Cref{thm:ConcenctrationInequalitiesFaces} we conclude that for large enough $B\in \R_{>0}$  it holds a.a.s. that
\begin{equation*}
   \gamma_\ell\geq 2^{\ell-\epsilon}\binom{\Eb(X_E)}{\ell}-B\Eb(X_E)^{\ell-\alpha}-\sum_{i=0}^{\ell-1} 2^{i+\epsilon}\binom{\Eb(X_E)}{i}\left(\sum_{j=n-1-\ell-i}^{n-1-\ell}2^{n-1-2i-j}\binom{i}{n-1-\ell-j}\binom{n-1-2i}{j}\right).
   \end{equation*}
  Since for $n\geq 2\ell+1$ one has $n-1-\ell-i\geq \frac{n-1-2i}{2}$, the expression $\binom{n-1-2i}{j}$ in the last sum is maximal for $j=n-1-\ell-i$. As also $2^{n-1-2i-j}$ is maximal in this case and $\binom{i}{n-1-\ell-j}\leq \binom{\ell-1}{\lfloor(\ell-1)/2\rfloor}$ for $0\leq i\leq \ell-1$ and any $j$, it follows that a.a.s. 
   \begin{align*}
       \gamma_\ell&\geq 2^{\ell-\epsilon}\binom{\Eb(X_E)}{\ell}-B\Eb(X_E)^{\ell-\alpha}- \binom{\ell-1}{\lfloor(\ell-1)/2\rfloor}\sum_{i=0}^{\ell-1} 2^{i+\epsilon}\binom{\Eb(X_E)}{i}(i+1)2^{\ell-i}\binom{n-1-2i}{n-1-\ell-i}\\
       &=2^{\ell-\epsilon}\binom{\Eb(X_E)}{\ell}-B\Eb(X_E)^{\ell-\alpha}- \binom{\ell-1}{\lfloor(\ell-1)/2\rfloor}\sum_{i=0}^{\ell-1} 2^{\ell+\epsilon}\binom{\Eb(X_E)}{i}(i+1)\binom{n-1-2i}{\ell-i}.
   \end{align*}
Analysing the expressions in the last equation, we see that
\begin{equation*}
    2^{\ell-\epsilon}\binom{\Eb(X_E)}{\ell}-B\Eb(X_E)^{\ell-\alpha}\in \Theta(n^{(2-\beta)\ell})
    \end{equation*}
    and
    \begin{equation*}
        2^{\ell+\epsilon}\binom{\Eb(X_E)}{i}(i+1)\binom{n-1-2i}{\ell-i}\in \Theta(n^{(2-\beta)i}\cdot n^{\ell-i}) =\Theta(n^{i+\ell-i\beta}).
    \end{equation*}
As $(2-\beta)\ell>i+\ell-i\beta $ for $\ell>i$, this implies
\begin{equation*}
    2^{\ell-\epsilon}\binom{\Eb(X_E)}{\ell}-B\Eb(X_E)^{\ell-\alpha}-\sum_{i=0}^{\ell-1} 2^{\ell+\epsilon}\binom{\Eb(X_E)}{i}(i+1)\binom{n-1-2i}{\ell-i}\in \Theta(n^{(2-\beta)\ell}).
\end{equation*}
As a consequence we have found that $\gamma_\ell$ can a.a.s. be bounded from below by an expression in $\Theta(n^{(2-\beta)\ell})$. Combining this with the previously shown upper bound completes the proof.
\end{proof}

\begin{remark}
It is natural to ask if the results for $\gamma_k$ we obtained in the subcritical and the supercritical regime (Theorems \ref{gamma:no cycles} and \ref{thm:gammaconcentration1}) can be extended to the critical regime, i.e., $p(n)=\frac{c}{n}$ for a constant $c>0$. Indeed, using that in this regime $X_\ell$ converges in distribution to a Poisson distribution with mean and variance $\frac{c^\ell}{2\ell}$ (see e.g., \cite[Theorem 10.1.1]{AS-TheProbabilisticMethod}), one can show that $X_\ell$ is highly concentrated around its mean. More precisely, 
 \begin{equation*}
   \lim_{n\to\infty}\Pb\left(|X_\ell-\Eb(X_\ell)|\leq \omega(n) \right)=1
\end{equation*}
for any arbitrarily slowly  increasing function $\omega:\mathbb{N}\to \mathbb{R}$. By similar arguments as in the proof of \Cref{thm:concentration Nonfaces}, this gives rise to the following concentration inequality for the non-faces:
\begin{equation}
    \lim_{n\to\infty}\Pb(n_{\ell-1}\leq n^{\kappa}\Eb(n_{\ell-1}))=1.
\end{equation}
By the same method as in the proof of \Cref{thm:ConcenctrationInequalitiesFaces}, one can show that 
\begin{equation}\label{eq:criticalFaces}
     \lim_{n\to\infty}\Pb(f_{\ell-1}\in \Theta(n^\ell))=1.
\end{equation}
Unfortunately, the arguments from the proof of  \Cref{thm:gammaconcentration1} only allow us to bound the double sum in the second row of \eqref{eq:gammaThroughF} by an expression in $\Theta(n^\ell)$. Hence, in order to be able to turn \eqref{eq:criticalFaces} into concentration inequalities for $\gamma_\ell$, different arguments or at least a  more refined analysis including the leading coefficients would be needed. It is reasonable to believe that, analogously to the variety of behaviors of the largest component of an Erd\H{o}s-R\'enyi graph (see e.g.~\cite[Chapter 11]{AS-TheProbabilisticMethod}), one would also get different behaviors for $\gamma_\ell$ depending on whether $c<1$, $c=1$ or $c>1$. We leave this as an open problem.
\end{remark}

\bibliographystyle{alpha} 
\bibliography{bibliography}
\end{document}